\documentclass[11pt, reqno]{amsart}
\usepackage{amsmath}
\usepackage{mathrsfs}
\usepackage{yhmath}
\usepackage{amsxtra}
\usepackage{amscd}
\usepackage{amsthm}
\usepackage{amsfonts}
\usepackage{amssymb}
\usepackage{anysize}
\usepackage{bbding}
\usepackage{bbm}
\usepackage{bm}
\usepackage[bookmarks=true]{hyperref}
\usepackage{empheq}
\usepackage{enumerate}
\usepackage{eucal}
\usepackage{fancyhdr}
\usepackage{mathtools,graphics,latexsym}
\usepackage{pifont}
\usepackage{pmgraph}
\usepackage[usenames,dvipsnames]{color}
\usepackage{tikz-cd}

\marginsize{3cm}{3cm}{2.3cm}{2.3cm}

\input prepictex
\input pictex
\input postpictex

\newtheorem{thm}{Theorem}[section]
\newtheorem{cor}[thm]{Corollary}
\newtheorem{lem}[thm]{Lemma}
\newtheorem{prop}[thm]{Proposition}
\newtheorem{conj}[thm]{Conjecture}

\theoremstyle{definition}

\theoremstyle{remark}
\newtheorem{rem}[thm]{Remark}

\numberwithin{equation}{section}

\begin{document}

\newcommand{\thmref}[1]{Theorem~\ref{#1}}
\newcommand{\secref}[1]{Section~\ref{#1}}
\newcommand{\lemref}[1]{Lemma~\ref{#1}}
\newcommand{\propref}[1]{Proposition~\ref{#1}}
\newcommand{\corref}[1]{Corollary~\ref{#1}}
\newcommand{\conjref}[1]{Conjecture~\ref{#1}}
\newcommand{\remref}[1]{Remark~\ref{#1}}
\newcommand{\eqnref}[1]{(\ref{#1})}
\newcommand{\exref}[1]{Example~\ref{#1}}

\DeclarePairedDelimiterX\setc[2]{\{}{\}}{\,#1 \;\delimsize\vert\; #2\,}
\newcommand{\bn}[1]{\underline{#1\mkern-4mu}\mkern4mu }

\newcommand{\nc}{\newcommand}

\nc{\Z}{{\mathbb Z}}
\nc{\Zp}{\Z_+}
\nc{\C}{{\mathbb C}}
\nc{\N}{{\mathbb N}}
\nc{\R}{{\mathbb R}}

\nc{\bi}{\bibitem}
\nc{\wt}{\widetilde}
\nc{\wh}{\widehat}
\nc{\dpr}{{\prime \prime}}
\nc{\ov}{\overline}
\nc{\bd}{\boldsymbol}
\nc{\un}{\underline}

\nc{\al}{\alpha}
\nc{\ga}{\gamma}
\nc{\de}{\delta}
\nc{\ep}{\epsilon}
\nc{\vep}{\varepsilon}
\nc{\La}{\Lambda}
\nc{\la}{\lambda}
\nc{\si}{\sigma}
\nc{\Sig}{{\bd \Sigma}}
\nc{\sig}{{\bd \sigma}}
\nc{\bpi}{{\bd \pi}}

\nc{\mf}{\mathfrak}
\nc{\mc}{\mathcal}
\nc{\cA}{\mc{A}}
\nc{\cB}{\mc{B}}
\nc{\cN}{\mc{N}}
\nc{\cD}{\mc{D}}
\nc{\cG}{\mc{G}}
\nc{\F}{\mc{F}}
\nc{\fB}{{\mf B}}
\nc{\fb}{{\mf b}}
\nc{\fg}{{\mf g}}
\nc{\fh}{{\mf h}}
\nc{\fL}{\mf{L}}
\nc{\fS}{\mf{S}}
\nc{\cS}{\mc{S}}

\nc{\s}{{\bf s}}
\nc{\w}{{\bf w}}
\nc{\x}{{\bf x}}
\nc{\y}{{\bf y}}
\nc{\z}{{\bf z}}
\nc{\et}{{\bd \eta}}
\nc{\zet}{{\bd \zeta}}

\nc{\xxi}{\un{{\bf x}_i}}
\nc{\etj}{\un{{\bd \eta}_j}}
\nc{\yr}{\un{{\bf y}_r}}
\nc{\zets}{\un{{\bd \zeta}_s}}

\nc{\pa}{\partial}
\nc{\pz}{\partial_z}
\nc{\Ber}{{\rm Ber}}
\nc{\Bers}{\Ber^{\bf s}}
\nc{\cdet}{{\rm cdet}}

\nc{\gl}{{\mf{gl}}}
\nc{\I}{\mathbb{I}}

\nc{\bs}{\bar{s}}
\nc{\bz}{\bar{0}}
\nc{\bo}{\bar{1}}
\nc{\hs}{\hat{s}}
\nc{\bfb}{{\bf b}}
\nc{\cL}{{\mc L}}
\nc{\ovga}{\ov{\ga}}

\nc{\xx}{{\mf x}}
\nc{\hf}{\frac{1}{2}}
\nc{\bB}{{\bf B}}

\nc{\mn}{{m|n}}
\nc{\pqmn}{{p+m|q+n}}
\nc{\gls}{\gl_\pqmn}
\nc{\gld}{\gl_d}
\nc{\glmn}{\gl_{m|n}}
\nc{\glpm}{\gl_{p+m}}
\nc{\glsm}{t^{-1} \gls [t^{-1}]}
\nc{\gldm}{t^{-1} \gld [t^{-1}]}
\nc{\fgm}{t^{-1} \fg [t^{-1}]}
\nc{\fzg}{\fz(\wh{\fg})}
\nc{\Lfg}{^L \fg}
\nc{\fhm}{t^{-1} \fh [t^{-1}]}
\nc{\fnpm}{t^{-1} \np [t^{-1}]}
\nc{\fnmm}{t^{-1} \nm [t^{-1}]}
\nc{\ugtr}{U(\fg[t])_\R}
\nc{\ugmr}{U(t^{-1}\fg[t^{-1}])_\R}

\nc{\Bdw}{\cB_d^\w}
\nc{\Bsz}{{\cB_\pqmn^\z}}
\nc{\Bdmu}{\cB_d^\mu}
\nc{\Bdmuw}{\cB_d^{\mu_\w}}
\nc{\Bpm}{\cB_{p+m}^\z}
\nc{\Ldw}{\cL_d^\w}
\nc{\Lsz}{\cL_{\pqmn}^\z}

\nc{\Bgw}{\cB_\fg^\w}
\nc{\Bgdw}{\cB_{\gld}^\w}
\nc{\Bgsz}{{\cB_{\gls}^\z}}
\nc{\Bgmu}{\cB_\fg^\mu}
\nc{\Bgmuw}{\cB_\fg^{\mu_\w}}
\nc{\Agmuz}{\cA_{\fg}^{\mu}(\z)}

\nc{\Vac}{V_{{\rm crit}}}
\nc{\fz}{{\mf z}}
\nc{\fzs}{\fz(\wh{\gl}_\pqmn)}
\nc{\fzd}{\fz(\wh{\gl}_d)}
\nc{\End}{{\rm End}}
\nc{\ev}{{\rm ev}}
\nc{\evz}{\underline{{\rm ev}}_\z}

\nc{\fzpmqn}{\fz(\wh{\gl}_\pqmn)}
\nc{\fzmn}{\fz(\wh{\gl}_{m|n})}

\nc{\opq}{\mathbbm{1}_{p|q}}
\nc{\op}{\mathbbm{1}_p}
\nc{\bb}{{\bd b}}
\nc{\dds}{{\bf d}_{\F}}
\nc{\bbmu}{{\bd b}^\mu}

\nc{\cT}{\mc{T}}
\nc{\T}{{\rm T}}
\nc{\Xl}{{\bd X}_{\! \ell}}
\nc{\Xd}{{\bd X}_{\! d}}
\nc{\Xpm}{{\bd X}_{\! p+m}}
\nc{\Xpqmn}{{\bd X}_{\! p+q+m+n}}

\nc{\spd}{\mf{sp}_{2d}}
\nc{\sod}{\mf{so}_{2d}}
\nc{\sld}{\mf{sl}_d}

\nc{\womg}{\omega}
\nc{\hv}{\hbar^\vee}
\nc{\hc}{\mf f}
\nc{\fgr}{\fg_\R}
\nc{\qc}{S_0}

\nc{\xai}{x^a_i}
\nc{\xaj}{x^a_j}
\nc{\xbi}{x^b_i}
\nc{\xbj}{x^b_j}
\nc{\yar}{y^a_r}
\nc{\yas}{y^a_s}
\nc{\ybr}{y^b_r}
\nc{\ybs}{y^b_s}
\nc{\yai}{y^a_i}
\nc{\ybi}{y^b_i}
\nc{\pyai}{\pa_{\yai}}
\nc{\pybi}{\pa_{\ybi}}
\nc{\pxai}{\pa_{\xai}}
\nc{\pxaj}{\pa_{\xaj}}
\nc{\pxbi}{\pa_{\xbi}}
\nc{\pxbj}{\pa_{\xbj}}
\nc{\pyar}{\pa_{\yar}}
\nc{\pyas}{\pa_{\yas}}
\nc{\pybr}{\pa_{\ybr}}
\nc{\pybs}{\pa_{\ybs}}
\nc{\pxaip}{\pa_{x^a_{i'}}}
\nc{\peajp}{\pa_{\eta^a_{j'}}}
\nc{\eai}{\eta^a_i}
\nc{\eaj}{\eta^a_j}
\nc{\ebi}{\eta^b_i}
\nc{\ebj}{\eta^b_j}
\nc{\zar}{\zeta^a_r}
\nc{\zas}{\zeta^a_s}
\nc{\zbr}{\zeta^b_r}
\nc{\zbs}{\zeta^b_s}
\nc{\zai}{\zeta^a_i}
\nc{\zbi}{\zeta^b_i}
\nc{\pzai}{\pa_{\zai}}
\nc{\pzbi}{\pa_{\zbi}}
\nc{\peai}{\pa_{\eai}}
\nc{\peaj}{\pa_{\eaj}}
\nc{\pebi}{\pa_{\ebi}}
\nc{\pebj}{\pa_{\ebj}}
\nc{\pzar}{\pa_{\zar}}
\nc{\pzas}{\pa_{\zas}}
\nc{\pzbr}{\pa_{\zbr}}
\nc{\pzbs}{\pa_{\zbs}}

\nc{\np}{\mf{n}_+}
\nc{\nm}{\mf{n}_-}
\nc{\smu}{S^\mu}
\nc{\smuw}{S^{\mu_\w}}
\nc{\sw}{S^{\w}}
\nc{\zpz}{\blb z^{-1}, \pz^{-1} \brb}
\nc{\dsz}{[\tns[ z^{-1} ]\tns]}

\nc{\blb}{(\!(}
\nc{\brb}{)\!)}
\nc{\lb}{\left(\!}
\nc{\rb}{\! \right)}
\nc{\ls}{\left[}
\nc{\rs}{\! \right]}
\nc{\llangle}{\big\langle}
\nc{\rrangle}{\big\rangle}
\nc{\disp}{\displaystyle}

\nc{\ns}{\hspace{-0.3mm}}
\nc{\nns}{\hspace{-2mm}}
\nc{\tns}{\kern-1.1pt}

\advance\headheight by 2pt

\title[Bethe algebras for classical Lie (super)algebras]
{Bethe algebras for unitarizable modules over classical Lie (super)algebras and a duality}

\author[W. K. Cheong]{Wan Keng Cheong}
\address{Department of Mathematics, National Cheng Kung University, Tainan, Taiwan 701401}
\email{keng@ncku.edu.tw}

\author[N. Lam]{Ngau Lam}
\address{Department of Mathematics, National Cheng Kung University, Tainan, Taiwan 701401}
\email{nlam@ncku.edu.tw}

\begin{abstract}

Let $\fg$ denote the classical Lie algebra $\gld$, $\spd$, or $\sod$ with a fixed $*$-structure $\sig$.
Let $M_1, \ldots, M_\ell$ be unitarizable $\fg$-modules (with respect to $\sig$), and let $\z=(z_1, \ldots, z_\ell) \in \C^\ell$.
We investigate the action of the Bethe algebra $\Bgmu$ for $\fg$ with respect to $\mu \in \fg^*$ on the tensor product $\bn M(\z):=M_1(z_1) \otimes \cdots \otimes  M_\ell(z_\ell)$ of evaluation $\fg[t]$-modules.
We show that if $\mu \circ \sig$ equals the complex conjugation of $\mu$, then $\Bgmu$ is diagonalizable on any finite-dimensional $\Bgmu$-submodule of $\bn M(\z)$ for $\z \in \R^\ell$. 
This, together with the result derived from the duality of Bethe algebras (see below), suggests that a simple spectrum conjecture for $\Bgmu$ should hold.

We establish a duality of Bethe algebras for the general linear Lie (super)algebras $\gld$ and $\gls$.
As an application, we show that under a generic condition, the Bethe algebra for $\gls$ with respect to $\z \in \C^{p+q+m+n}$ is diagonalizable with a simple spectrum on any weight space of $L_1(w_1) \otimes \cdots \otimes L_d(w_d)$, where the $L_i$ are (infinite-dimensional) unitarizable highest weight $\gls$-modules corresponding to generalized partitions of depth 1, and $w_1, \ldots, w_d \in \C$. 
We also obtain the corresponding result for $\gl_{p+m}$ by setting $q=n=0$.

\end{abstract}

\maketitle

\setcounter{tocdepth}{1}

\section{Introduction}

Let $\fg$ be a finite-dimensional semisimple Lie algebra over $\C$.
The Bethe algebra for $\fg$ is a commutative subalgebra of $U(\fg[t])$ induced by the Feigin--Frenkel center (see \cite{FFR, FFTL, Ry06}), where $U(\fg[t])$ denotes the universal enveloping algebra of the current algebra $\fg[t]$. 
It produces (higher) Gaudin Hamiltonians, including the quadratic ones originally introduced by Gaudin \cite{G76, G83} to describe a completely integrable quantum spin chain model.
Also, the notion of the Bethe algebra can be extended to the general linear Lie (super)algebras (see \cite{HM, MR, MTV06, Ta}).

Let $d$ be a positive integer, and let $p$, $q$, $m$, and $n$ be non-negative integers that are not all zero.
The aim of the paper is two-fold.
Firstly, we investigate the action of the Bethe algebra for $\fg$ on unitarizable $\fg$-modules, where $\fg$ is the classical Lie algebra of type $\mf{a,c,d}$ of rank $d$ (i.e., $\fg=\gld$, $\spd$, or $\sod$).
Secondly, we establish a duality of Bethe algebras for the general linear Lie (super)algebras $\gld$ and $\gls$ and use it to study the action of the Bethe algebra for $\gls$ on some infinite-dimensional unitarizable $\gls$-modules.

\subsection{Unitarizable modules and diagonalization of Bethe algebras}

Let $\fg$ be a simple Lie algebra or the general linear Lie algebra over $\C$.
For $\mu \in \fg^*$, let $\Bgmu$ be the Bethe algebra for $\fg$ with respect to $\mu$; see \secref{BA}.
The action of $\Bgmu$ on a tensor product of finite-dimensional $\fg$-modules has been extensively studied (see \cite{FFRy, MTV06, MTV08, MTV09-2,MTV09-3,Ry20}).
Let $\z=(z_1, \ldots, z_\ell) \in \C^\ell$, and let $V_1, \ldots, V_\ell$ be finite-dimensional irreducible $\fg$-modules.
Feigin, Frenkel, and Rybnikov \cite{FFRy} show that the algebra $\Bgmu$ acts cyclically on the tensor product $\bn V(\z):=V_1(z_1) \otimes \cdots \otimes  V_\ell(z_\ell)$ of evaluation $\fg[t]$-modules for any regular $\mu$ and any pairwise distinct $z_1, \ldots, z_\ell$.
They also show that $\Bgmu$ is diagonalizable with a simple spectrum on $\bn V(\z)$  for generic $\mu$ and generic $\z$.

Now let $\fg=\gld$, $\spd$, or $\sod$. It possesses a $*$-structure $\sig: \fg \longrightarrow \fg$, i.e., $\sig$ is an even anti-linear anti-involution (see \secref{star}).
Let $M_1, \ldots, M_\ell$ be unitarizable $\fg$-modules with respect to $\sig$, and let $\bn M(\z)=M_1(z_1) \otimes \cdots \otimes  M_\ell(z_\ell)$.
Define $\fg^*_{\sig}=\setc*{\mu \in  \fg^*}{ \mu \circ \sig = \ov\mu}$, where $\ov\mu \in \fg^*$ is the complex conjugation of $\mu$.
Using the properties of $*$-structures and Feigin--Frenkel centers, we obtain the following.

\begin{thm} [\thmref{diag-finite-dim}] \label{main1}
The Bethe algebra $\Bgmu$ is diagonalizable on any finite-dimensional $\Bgmu$-submodule of $\bn M (\z)$ for any $\mu \in \fg^*_{\sig}$ and $\z \in \R^\ell$.
\end{thm}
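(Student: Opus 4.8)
The plan is to show that $\Bgmu$ acts on $\bn M(\z)$ by operators that are self-adjoint (or more generally normal) with respect to a suitable Hermitian inner product, since a commutative algebra of normal operators on a finite-dimensional space is simultaneously diagonalizable. Because $M_1, \ldots, M_\ell$ are unitarizable with respect to $\sig$, each $M_i$ carries a positive-definite Hermitian form $\langle \cdot, \cdot \rangle_i$ satisfying $\langle X v, w \rangle_i = \langle v, \sig(X) w \rangle_i$ for all $X \in \fg$. The first step is to equip $\bn M(\z)$ with the tensor-product form $\langle \cdot, \cdot \rangle = \langle \cdot, \cdot \rangle_1 \otimes \cdots \otimes \langle \cdot, \cdot \rangle_\ell$, which remains positive-definite, and to restrict it to the finite-dimensional $\Bgmu$-submodule in question.

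Next I would analyze how $\sig$ interacts with the generators of $\Bgmu$. The Bethe algebra is built from the Feigin--Frenkel center, and its generators are certain elements of $U(\fg[t])$ (coefficients of the Segal--Sugawara / generalized minor operators), specialized via $\mu$. The key computation is to determine the adjoint of each generating operator under $\langle \cdot, \cdot \rangle$ on $\bn M(\z)$. For an element $X \otimes t^r \in \fg[t]$, the $\sig$-adjointness on each factor together with the evaluation at $z_i \in \R$ should give that the adjoint of the action of $X \otimes t^r$ is the action of $\sig(X) \otimes t^r$ (here the reality $\z \in \R^\ell$ is what makes the evaluation parameters behave well under conjugation). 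Extending $\sig$ to an anti-linear anti-automorphism $\wh\sig$ of $U(\fg[t])$ and tracking how it acts on the Feigin--Frenkel generators, the hypothesis $\mu \in \fg^*_\sig$, i.e. $\mu \circ \sig = \ov\mu$, is precisely the condition needed to ensure that $\wh\sig$ preserves the Bethe algebra $\Bgmu$, sending each generator to (the conjugate of) another element of the same commutative algebra. Consequently every operator in the image of $\Bgmu$ has its Hermitian adjoint again lying in $\Bgmu$.

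Since $\Bgmu$ is commutative, all its operators commute with one another, and by the previous step the adjoint of any such operator also lies in $\Bgmu$ and hence commutes with every operator in $\Bgmu$; this is exactly the statement that each operator is normal and that the family is a commuting family of normal operators. The spectral theorem for a commuting family of normal operators on a finite-dimensional Hermitian inner product space then yields a simultaneous orthonormal eigenbasis, which is the desired diagonalizability of $\Bgmu$ on the submodule.

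The main obstacle I anticipate is the precise bookkeeping in the second step: verifying that $\wh\sig$, the anti-linear anti-involution of $U(\fg[t])$ induced by $\sig$ and complex conjugation of the coefficient polynomials in $t$, actually stabilizes the Feigin--Frenkel center and carries the $\mu$-specialized generators back into $\Bgmu$ under the hypothesis $\mu \circ \sig = \ov\mu$. This requires knowing how $\sig$ permutes or transforms the explicit Segal--Sugawara generators (equivalently, understanding the action of $\sig$ on the center of the affine vertex algebra at the critical level), and checking that the anti-automorphism and the complex conjugation of structure constants combine correctly so that self-adjointness, rather than some twisted version, is obtained. Once this compatibility is established, the reduction to the finite-dimensional spectral theorem is routine.
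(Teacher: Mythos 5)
Your overall strategy is the same as the paper's: put the positive-definite tensor product of the contravariant forms on $\bn M(\z)$, show that the generators of $\Bgmu$ act by operators whose adjoints again lie in the (commutative) algebra, and invoke the finite-dimensional spectral theorem. However, the step you defer as ``the main obstacle I anticipate'' --- verifying that the anti-linear anti-involution $\sig$ stabilizes the Feigin--Frenkel center and carries the $\mu$-specialized generators back into $\Bgmu$ --- is not bookkeeping; it is the entire mathematical content of the theorem, and your proposal does not supply it. Without it you have no control over the adjoints of the Bethe operators, and the normality argument does not get off the ground.

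The paper closes this gap in two stages. First (\propref{anti-cent}), it shows that $\sig$ actually \emph{fixes} each of the explicit Segal--Sugawara generators $S_1,\ldots,S_d$: since $\sig(\qc)=\qc$, the element $\sig(S_i)$ commutes with $\qc$ and hence lies in $\fzg$ by Rybnikov's theorem (\thmref{centralizer}); then, because the explicit $S_i$ have real structure constants ($S_i\in\ugmr$) and $\sig$ acts as the identity on $U(t^{-1}\fh[t^{-1}])_\R$ while preserving the complement $\mc N$ in the Harish--Chandra decomposition, one gets $\hc(\sig(S_i))=\hc(S_i)$ and concludes $\sig(S_i)=S_i$ from the injectivity of the Feigin--Frenkel isomorphism. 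Second (\propref{C-inv-Gau-R}), the hypothesis $\mu\circ\sig=\ov\mu$ makes $\Psi^\mu$ commute with $\sig$, so every coefficient of $\smu_i(z)$ is $\sig$-fixed, hence (for $\z\in\R^\ell$) a genuinely Hermitian operator on $\bn M(\z)$ (\lemref{Hermitian}); diagonalizability on any finite-dimensional submodule follows via \propref{generator}. Note that your weaker goal (adjoint lands back in $\Bgmu$, giving normality rather than self-adjointness) could in principle be reached using only the centralizer theorem, since $\sig(S)\in\fzg$ already implies that the coefficients of $\Psi^\mu(\sig(S))$ generate elements of $\Bgmu$; but either way you must prove a statement of this kind, and as written your proposal only names it.
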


For $\w:=(w_1, \ldots, w_d) \in \C^d$, let $\mu_\w \in \fg^*$ be the element defined by \eqref{muw1} and \eqref{muw2}.
In other words, $\mu_\w$ corresponds to a diagonal matrix.
We write $\Bgw:=\Bgmuw$.

Let $L_1, \ldots, L_\ell$ be unitarizable highest weight $\fg$-modules with respect to $\sig$.
Set $\un L=L_1 \otimes \cdots \otimes L_\ell$ and $\un L(\z)=L_1(z_1) \otimes \cdots \otimes  L_\ell(z_\ell)$.
Let $\fh$ denote the standard Cartan subalgebra of $\fg$ (see \secref{triang}).
Note that for any weight $\ga \in \fh^*$ of $\un L$, the $\ga$-weight space $\un L(\z)_\ga$ of $\un L(\z)$ is a finite-dimensional $\Bgw$-module, and that $\mu_\w \in \fg^*_{\sig}$ for $\w \in \R^d$.
We have the following as a consequence of \thmref{main1}.

\begin{cor}  [\corref{diag-uni}] \label{maincor}
Let $\ga \in \fh^*$ be any weight of $\un L$. 
Then $\Bgw$ is diagonalizable on $\un L(\z)_\ga$ for any $\w \in \R^d$ and $\z \in \R^\ell$.
\end{cor}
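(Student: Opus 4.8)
The plan is to obtain \corref{maincor} as a direct specialization of \thmref{main1}, taking $M_i = L_i$ and $\mu = \mu_\w$. The only substantive work is to confirm that the two hypotheses of \thmref{main1} are met on each weight space, namely that $\mu_\w \in \fg^*_\sig$ for $\w \in \R^d$ and that $\un L(\z)_\ga$ is a finite-dimensional $\Bgw$-submodule of $\un L(\z)$; once these are in place the conclusion is immediate.

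First I would verify that $\mu_\w \in \fg^*_\sig$ for real $\w$. Since $\mu_\w$ corresponds to a diagonal matrix and the $*$-structure $\sig$ is, in the standard matrix realization, essentially the conjugate transpose, a short computation shows that $\mu_\w \circ \sig = \ov{\mu_\w}$ holds exactly when every $w_i$ is real. Thus $\w \in \R^d$ forces $\mu_\w \in \fg^*_\sig$; this is precisely the assertion recorded in the remark preceding the statement.

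Next I would check the two properties of the weight space $\un L(\z)_\ga$. For finite-dimensionality, decompose
\[
\un L(\z)_\ga = \bigoplus (L_1)_{\ga_1} \otimes \cdots \otimes (L_\ell)_{\ga_\ell},
\]
the sum ranging over tuples $(\ga_1, \ldots, \ga_\ell)$ with $\ga_1 + \cdots + \ga_\ell = \ga$. Because each $L_i$ is a highest weight module, each $\ga_i$ equals the highest weight $\la_i$ minus a non-negative integer combination of simple roots, and each weight space $(L_i)_{\ga_i}$ is finite-dimensional. The total descent $\sum_i (\la_i - \ga_i) = \big(\sum_i \la_i\big) - \ga$ is then a fixed element of the positive root lattice, which can be split into $\ell$ non-negative summands in only finitely many ways; hence only finitely many tuples contribute and $\un L(\z)_\ga$ is finite-dimensional. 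For $\Bgw$-stability, I would invoke the diagonality of $\mu_\w$: because $\mu_\w$ lies in the Cartan direction, $\Bgw$ commutes with the diagonal action of $\fh$ on $\un L(\z)$ and therefore preserves its weight decomposition, so $\un L(\z)_\ga$ is a $\Bgw$-submodule.

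With both hypotheses confirmed, \thmref{main1} applies to the finite-dimensional $\Bgw$-submodule $\un L(\z)_\ga$ of the tensor product $\un L(\z)$ of evaluation modules of the unitarizable modules $L_i$, yielding that $\Bgw$ is diagonalizable on $\un L(\z)_\ga$ for every $\z \in \R^\ell$. There is no genuine obstacle beyond verifying these hypotheses; the only points deserving care are that the finiteness argument exploits the highest weight structure of the $L_i$ rather than mere unitarizability, and that the $\Bgw$-stability of weight spaces rests on the diagonality of $\mu_\w$.
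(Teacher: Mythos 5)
Your proposal is correct and follows exactly the paper's route: the paper's proof is a one-line reduction to Theorem~\ref{diag-finite-dim} after noting that $\un L(\z)_\ga$ is finite-dimensional and that $\mu_\w \in \fg^*_\sig$ for $\w \in \R^d$. You simply spell out the details (the diagonal computation for $\mu_\w \circ \sig = \ov{\mu_\w}$, the finiteness of weight multiplicities in a tensor product of highest weight modules, and the $\Bgw$-stability of weight spaces via $\fh \subseteq \fg^{\mu_\w}$) that the paper records as evident in the surrounding remarks.
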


A natural question is whether there are unitarizable analogs of the results of \cite{FFRy} mentioned above. 

\begin{conj} [\conjref{cyclic-diag-conj}] \label{mainconj}
Let $\ga \in \fh^*$ be any weight of $\un L$. 
Then the weight space $\un L(\z)_\ga$ is a cyclic $\Bgw$-module for any pairwise distinct $w_1, \ldots, w_d \in \C$ and any pairwise distinct $z_1, \ldots, z_\ell \in \C$.
Moreover, $\Bgw$ is diagonalizable with a simple spectrum on $\un L(\z)_\ga$ for generic $\w$ and generic $\z$.
\end{conj}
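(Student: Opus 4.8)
The plan is to deduce the conjecture from the Bethe-algebra duality together with the finite-dimensionality of the weight spaces $\un L(\z)_\ga$, treating $\fg=\gld$ first and isolating the genuinely new difficulties for $\fg=\spd$ and $\sod$. For $\fg=\gld$ the duality established in the paper places the $\gld$-Bethe algebra $\Bgw$ acting on $\un L(\z)_\ga$ and a $\gls$-Bethe algebra $\Bgsz$ acting on a weight space of a tensor product $L_1'(w_1)\otimes\cdots\otimes L_d'(w_d)$ of depth-$1$ unitarizable $\gls$-modules on a common finite-dimensional space, in such a way that the two families of parameters exchange their roles: the spectral parameter $\mu_\w$ on the $\gld$-side becomes the collection of site parameters $w_1,\ldots,w_d$ on the $\gls$-side, while the site parameters $z_1,\ldots,z_\ell$ become the spectral parameter on the $\gls$-side. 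The first step is to pin down this correspondence so that $\un L(\z)_\ga$ is carried isomorphically, as a $\Bgw$-module, onto a weight space on the $\gls$-side with $\Bgsz$ acting.

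Granting the duality, the simple-spectrum assertion for $\fg=\gld$ is then \emph{equivalent} to the corresponding assertion for $\Bgsz$, which is exactly the application proved in the paper under its generic hypothesis. Since the diagonalizability of a commuting family with pairwise distinct joint eigenvalues is a Zariski-open condition on $(\w,\z)\in\C^d\times\C^\ell$, it suffices to transport a single simple-spectrum point across the duality; \corref{maincor} independently secures diagonalizability of $\Bgw$ on $\un L(\z)_\ga$ along the entire real locus, which is consistent with, and reinforces, the generic claim. This settles the diagonalizability-with-simple-spectrum part of the conjecture for $\fg=\gld$ within the class of unitarizable modules reached by the duality.

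For the cyclicity assertion I would match the two hypotheses of the conjecture with the two hypotheses of a Feigin--Frenkel--Rybnikov-type theorem on the dual side: pairwise distinct $w_1,\ldots,w_d$ corresponds to pairwise distinct sites for $\Bgsz$, and pairwise distinct $z_1,\ldots,z_\ell$ corresponds to regularity of the $\gls$-spectral parameter. Because the duality is an isomorphism of modules intertwining the two Bethe-algebra actions, cyclicity of $\Bgsz$ on the dual weight space would transfer verbatim to cyclicity of $\Bgw$ on $\un L(\z)_\ga$. The substantive point is to establish such a cyclicity statement on the $\gls$-side for \emph{infinite-dimensional} depth-$1$ unitarizable modules, where the original Feigin--Frenkel--Rybnikov argument \cite{FFRy} does not apply directly; here I would work weight space by weight space (each is finite-dimensional) and adapt the Wronskian/oper and Bethe-ansatz-completeness techniques to the super and oscillator setting, using the Berezinian description of the $\gls$-Bethe algebra.

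The main obstacle lies in two places. First, cyclicity is claimed for \emph{all} pairwise distinct parameters, not merely for generic ones, so a uniform argument is needed; transporting openness alone is insufficient, and one must control the cyclic vector, and the non-vanishing of the relevant discriminant, uniformly in the weight $\ga$ and in the unitarizable factors, and in particular beyond the depth-$1$ range covered by the current duality. Second, and more seriously, for $\fg=\spd$ and $\sod$ there is at present no analogue of the duality: one must first construct the appropriate orthosymplectic Howe dual pair at the level of current algebras and prove that it intertwines the respective Bethe algebras, and only then can the above reductions be attempted. I expect this construction of a type $\mf{c}$/$\mf{d}$ current-algebra Bethe-algebra duality to be the principal difficulty, with the passage from generic to all-distinct parameters in the cyclicity statement a secondary but still nontrivial one.
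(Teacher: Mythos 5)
The statement you are addressing is stated in the paper as \conjref{cyclic-diag-conj}: it is an open conjecture, and the paper offers no proof of it, only partial evidence (\thmref{Bethe-unitary} and \corref{evid}) covering the case $\fg=\glpm$ with $\ell=d$ and each $L_a$ equal to one of the specific depth-one unitarizable modules $W_{\ga_a}$. Your proposal is likewise a plan rather than a proof, and you correctly identify the two genuinely open obstructions, in agreement with the paper: there is at present no analogue of the Bethe duality for $\spd$ and $\sod$, and on the type $\mf a$ side the duality only reaches depth-one unitarizable modules.

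Within the range the duality does cover, however, your reduction runs in the wrong direction, and this creates a gap the paper's argument does not have. The conjecture's factors $L_i$ are the \emph{infinite-dimensional} unitarizable modules; under the duality (\thmref{Bethe_duality} combined with \propref{sws}) a weight space of their tensor product is identified, as a module over the relevant Bethe algebra, with a weight space of a tensor product of \emph{finite-dimensional} irreducible $\gld$-modules $\F_{(i)}^{k_i}$, where $d$ is the number of unitarizable factors --- this is the identification \eqref{eq-act}. It is on that finite-dimensional side that the Feigin--Frenkel--Rybnikov theorems (\thmref{g-cyclic}, \thmref{g-diag}) apply verbatim; in particular \thmref{g-cyclic} already gives cyclicity for \emph{all} pairwise distinct evaluation points and all regular $\mu$, and $\mu_\w$ is regular precisely when the $w_a$ are pairwise distinct, so the passage from generic to all-distinct parameters that you single out as an obstacle is not one in the covered range, and no super or oscillator adaptation of the Wronskian, oper, or Bethe-ansatz-completeness machinery is needed there. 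Your plan instead proposes to establish an FFRy-type cyclicity statement directly on the infinite-dimensional $\gls$ side by adapting those techniques; that is exactly the hard work the duality is designed to make unnecessary, and treating it as the ``substantive point'' both misplaces the difficulty and leaves unaddressed the actual content of the paper's evidence, namely the matching of weight spaces and Bethe-algebra images across the two sides. Even with that corrected, what results is only \corref{evid}, not the conjecture: general unitarizable highest weight modules of depth greater than one and the types $\mf c$, $\mf d$ remain entirely open.
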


\thmref{main3} (see below) provides positive evidence for \conjref{mainconj} when $q$ and $n$ are set to 0.

\subsection{A duality for the general linear Lie (super)algebras}

For $\w:=(w_1, \ldots, w_d) \in \C^d$ and $\z:=(z_1, \ldots, z_{p+q+m+n}) \in \C^{p+q+m+n}$, we consider the Bethe algebra $\Bdw:=\Bgdw$ for the general linear Lie algebra $\gld$ with respect to $\w$ and the Bethe algebra $\Bsz:=\Bgsz$ for the general linear Lie superalgebra $\gls$ with respect to $\z$; see \secref{BA-gls}.
Note that $\Bdw$ is a subalgebra of $U(\gld[t])$ generated by the coefficients of the series $\cdet (\Ldw)$ while $\Bsz$ is a subalgebra of $U(\gls[t])$ generated by the coefficients of the series $\Bers \big( \Lsz \big)$. Here $\cdet$ and $\Bers$ are respectively the column determinant and the Berezinian of type $\s:=(0^p, 1^q, 0^m, 1^n)$ (see \secref{Ber}), and $\Ldw$ and $\Lsz$ are respectively the matrices defined in \eqref{Ldw} and \eqref{Lsz}.

Let $\F$ be the polynomial superalgebra generated by $\xai$, $\eaj$, $\yar$, and $\zas$, for $i=1, \ldots, m$, $j=1, \ldots, n$, $r=1, \ldots, p$, $s=1, \ldots, q$, and $a=1, \ldots, d$. Here $\xai$ and $\yar$ are even variables while $\eaj$ and $\zas$ are odd.
The superalgebra $\F$ can be realized as the Fock space of $d(p+m)$ bosonic and $d(q+n)$ fermionic oscillators (cf. \cite{CL03, LZ06}).
Let $\cD$ be the corresponding Weyl superalgebra, i.e., $\cD$ is the associative unital superalgebra generated by $\xai$, $\eaj$, $\yar$, and $\zas$ as well as their derivatives $\frac{\pa}{\pa x_i^a}$, $\frac{\pa}{\pa\eta_j^a}$, $\frac{\pa}{\pa y_r^a}$, and $\frac{\pa}{\pa\zeta_r^a}$.
There exist superalgebra homomorphisms $\phi : U(\gld) \longrightarrow \cD$ and $\varphi : U(\gls) \longrightarrow \cD$; see \eqref{phi} and \eqref{ovphi}.
They extend to superalgebra homomorphisms $\phi_\z : U(\gld[t]) \zpz \longrightarrow \cD \zpz$ and $\varphi_\w : U(\gls[t]) \zpz \longrightarrow \cD \zpz$, respectively.
Here, for instance, $\cD \zpz$ is the superalgebra of pseudo-differential operators over $\cD$; see \secref{pseudo}.
The maps $\phi_\z$ and $\varphi_\w$ induce an action of $\Bdw$ and an action of $\Bsz$ on $\F$, respectively.
The following theorem asserts that the actions are equivalent.
We call it the \emph{Bethe duality of $(\gld, \gls)$}.

\begin{thm} [\thmref{Bethe_duality}] \label{main2}
The coefficients of $\phi_\z  \cdet (\Ldw)$ are determined by those of $\varphi_\w  \Bers \big( \Lsz \big)$, and vice versa. Consequently, $\phi_\z \big(\Bdw \big)=\varphi_\w \big(\Bsz \big)$.
\end{thm}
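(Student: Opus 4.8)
The plan is to make both sides completely explicit as elements of $\cD\zpz$ and then to exhibit a single clean algebraic identity relating them. Concretely, I would compute $\phi_\z \cdet(\Ldw)$ and $\varphi_\w \Bers\big(\Lsz\big)$ as pseudo-differential operators over $\cD$ and show that they satisfy an explicit \emph{invertible} relation; the relation I expect is that they are mutually inverse, i.e. their product equals $1$. Granting such an identity, the remaining assertions are purely formal: the coefficients of an invertible pseudo-differential operator and those of its inverse are polynomial expressions in one another, so each family of coefficients determines the other, and the two subalgebras of $\cD\zpz$ they generate coincide. This last point is exactly $\phi_\z\big(\Bdw\big)=\varphi_\w\big(\Bsz\big)$, since by definition $\Bdw$ is generated by the coefficients of $\cdet(\Ldw)$ and $\Bsz$ by those of $\Bers\big(\Lsz\big)$.

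For the $\gld$ side, I would first apply $\phi_\z$ to the matrix $\Ldw$ and compute its column determinant. Here each $\phi(E_{ab})$ is a normally ordered sum, over all $p+q+m+n$ oscillator families, of quadratic monomials pairing a variable carrying upper index $a$ with a derivative carrying upper index $b$; after applying $\phi_\z$, the entries of $\Ldw$ become such expressions shifted by $\pz$ and the parameters $w_a$. The structural input is that $\Ldw$ has the special form for which the column determinant telescopes: using the commutation relations among the $\phi(E_{ab})$ together with a Capelli-type identity, $\phi_\z \cdet(\Ldw)$ factors as an ordered product of $d$ first-order pseudo-differential operators indexed by $a=1,\dots,d$.

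For the $\gls$ side, I would apply $\varphi_\w$ to $\Lsz$ and compute the Berezinian of type $\s=(0^p,1^q,0^m,1^n)$. Because $\s$ splits the index set into an even block of size $p+m$ and an odd block of size $q+n$, the Berezinian is a ratio in which the even block enters positively and the odd block negatively; each block in turn factors into first-order pseudo-differential operators built from the very same oscillator families appearing on the $\gld$ side, but now grouped by the $\gls$-index rather than by $a$. The heart of the proof is then a single reorganization identity in $\cD\zpz$: the ordered product of first-order factors produced by $\varphi_\w \Bers\big(\Lsz\big)$ is precisely the inverse of the ordered product produced by $\phi_\z \cdet(\Ldw)$.

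I expect this reorganization to be the main obstacle. It amounts to a noncommutative, super version of the Capelli identity, and the difficulty lies almost entirely in the bookkeeping of normal ordering: commuting $\pz$ past oscillator-valued entries generates shifts, and the bosonic families (from $\xai$ and $\yar$) contribute with the opposite sign and opposite exponent to the fermionic families (from $\eaj$ and $\zas$), so the two differently grouped products of noncommuting first-order operators must be matched term by term with the correct parities. Once the mutual-inverse identity is verified, the mutual determination of coefficients, and hence the equality $\phi_\z\big(\Bdw\big)=\varphi_\w\big(\Bsz\big)$, follow immediately as in the first paragraph; should the precise identity turn out to differ from a strict inverse by an explicit invertible factor, the same argument applies verbatim.
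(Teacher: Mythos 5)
Your proposal hinges on a single identity --- that $\phi_\z \cdet(\Ldw)$ and $\varphi_\w \Bers\big(\Lsz\big)$ are mutually inverse in $\cD\zpz$, possibly up to an explicit invertible factor --- and that identity is false. A degree count already rules it out: $\phi_\z\cdet(\Ldw)$ is a monic operator of order $d$ in $\pz$ (with coefficients rational in $z$), while $\varphi_\w\Bers\big(\Lsz\big)$ has top order $p+m-q-n$ in $\pz$, so their product has order $d+p+m-q-n$, which is nonzero in general; no multiplication by an invertible factor can repair this. The relation that actually holds (\propref{omega-Ber}) is of a different nature: it involves the anti-involution $\omega$ of $\cD\zpz$ that exchanges $z^i\pz^j$ with $z^j\pz^i$ and each oscillator with its derivative, namely
\[
(z-w_1)\cdots(z-w_d)\,\varphi_\w\Bers\big(\Lsz\big)
=\omega\Big(R^\prime_{m|n}(z)\cdot\phi_\z\cdet(\Ldw)\cdot R_{p|q}(z)\Big).
\]
The swap of $z$ and $\pz$ effected by $\omega$ is the essential mechanism --- it is what converts an order-$d$ operator in $\pz$ into an order-$(p+m-q-n)$ one, and what makes the coefficient of $z^k\pz^\ell$ on one side reappear as the coefficient of $z^\ell\pz^k$ on the other. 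Your intuition about factoring both sides into first-order factors grouped by the two different indices is in the right spirit (the appendix does manipulate quasiminor factorizations of Manin matrices via Propositions \ref{decomp}, \ref{block2}, and \ref{CFR}), but the target identity you aim those factorizations at cannot be reached.

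Even granting the correct identity, your claim that the mutual determination of coefficients is ``purely formal'' misses an indispensable step. Applying $\omega$ sends the coefficients $b^{\w,\z}_{k,\ell}=\phi_\z(b^{\w}_{k,\ell})$ to elements that a priori lie in $\omega\big(\phi_\z(\Bdw)\big)$, not in $\phi_\z(\Bdw)$. The paper closes this gap by checking that $\omega\circ\phi_\z=\phi_\z\circ\varpi$, where $\varpi$ is the Cartan anti-involution of $U(\gld[t])$, and then invoking the nontrivial theorem that $\varpi$ fixes $\Bdw$ elementwise (\propref{C-inv-Gau}, due to Mukhin--Tarasov--Varchenko); only then does $\omega\big(b^{\w,\z}_{k,\ell}\big)=b^{\w,\z}_{k,\ell}$ follow (\propref{omega-inv}), and with it both the explicit formula in \thmref{Bethe_duality} and the equality $\phi_\z\big(\Bdw\big)=\varphi_\w\big(\Bsz\big)$. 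This $\omega$-invariance of the image of the Bethe algebra has no counterpart in your outline and cannot be dispensed with.
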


\thmref{main2} recovers several dualities established earlier. After specializing to $p=q=0$, it recovers the Bethe duality of $(\gld, \glmn)$ due to Huang and Mukhin \cite{HM}.
If we assume further that $n=0$ (resp., $m=0$), \thmref{Bethe_duality} gives the Bethe duality of $(\gld, \gl_m)$ obtained by Mukhin, Tarasov, and Varchenko \cite{MTV09} (resp., a variant of the duality of $(\gld, \gl_n)$ obtained by Tarasov and Uvarov \cite{TU}).

Also, another duality of $(\gld, \gl_m)$ is obtained by Vicedo and Young \cite{VY} for Gaudin models with irregular singularities.
It is extended by the authors \cite{ChL25-2} to a duality of $(\gld, \gls)$, which specializes to a variant of \thmref{main2}.

Using the properties of the actions of $\Bdw$ on tensor products of finite-dimensional $\gl_d$-modules, \thmref{main2} enables us to better understand the actions of $\Bsz$ on tensor products of infinite-dimensional unitarizable $\gls$-modules, as shown in the following theorem.

\begin{thm} [\thmref{Bethe-unitary}] \label{main3}
Let $\un L=L_1 \otimes \cdots \otimes L_d$ and $\un L(\w)=L_1(w_1) \otimes \cdots \otimes L_d(w_d)$, 
where the $L_i$ are (infinite-dimensional) unitarizable highest weight $\gls$-modules corresponding to generalized partitions of depth 1 (see \eqref{depth1}).
Let $\mu \in \fh_{\pqmn}^*$ be any weight of $\un L$ (where $\fh_{\pqmn}$ is the Cartan subalgebra of $\gls$ defined in \secref{gl}). 
Then the weight space $\un L(\w)_\mu$ is a cyclic $\Bsz$-module for any pairwise distinct $z_1, \ldots, z_{p+q+m+n} \in \C$ and any pairwise distinct $w_1, \ldots, w_d \in \C$.
Moreover, $\Bsz$ is diagonalizable with a simple spectrum on $\un L(\w)_\mu$ for generic $\z$ and generic $\w$.
\end{thm}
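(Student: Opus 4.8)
The plan is to deduce everything from the Bethe duality of $(\gld,\gls)$ established in \thmref{main2}, transferring to the $\gls$-side the cyclicity and simple-spectrum results of Feigin, Frenkel, and Rybnikov \cite{FFRy} for the general linear Lie algebra $\gld$. The starting point is the $(\gld,\gls)$ Howe duality on the Fock space $\F$. Via $\varphi_\w$ the superalgebra $\F$ is, as a $\gls[t]$-module, the tensor product $\un L(\w)=L_1(w_1)\otimes\cdots\otimes L_d(w_d)$, where $L_a$ is the Fock module of the $a$-th copy of oscillators (an infinite-dimensional unitarizable highest weight module corresponding to a generalized partition of depth $1$) placed at the evaluation point $w_a$; via $\phi_\z$ it carries a commuting $\gld[t]$-action with evaluation parameters $z_1,\ldots,z_{p+q+m+n}$. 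By \thmref{main2} the operators $\varphi_\w(\Bsz)$ and $\phi_\z(\Bdw)$ on $\F$ coincide.

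First I would identify the weight space $\un L(\w)_\mu$ on the $\gld$-side. The natural $\Z^{d(p+q+m+n)}$-grading of $\F$ refines both the $\gls$- and the $\gld$-weight; fixing the $\gls$-weight $\mu$ amounts to fixing the total degree $\mu_j$ in each of the $p+q+m+n$ species. Since $\gld$ acts only on the copy index $a=1,\ldots,d$ and not across species, the subspace $\F_\mu=\un L(\w)_\mu$ is $\gld[t]$-stable and decomposes as
\[
\F_\mu \;\cong\; V_1(z_1)\otimes\cdots\otimes V_{p+q+m+n}(z_{p+q+m+n})=:\bn V(\z),
\]
where each factor $V_j$ is the degree-$\mu_j$ component of a single species, hence a symmetric power $S^{\mu_j}(\C^d)$ (bosonic species) or an exterior power $\Lambda^{\mu_j}(\C^d)$ with $0\le\mu_j\le d$ (fermionic species), taken on the natural $\gld$-module $\C^d$ or its dual. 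In every case $V_j$ is a \emph{finite-dimensional} irreducible $\gld$-module, and it is nonzero precisely because $\mu$ is a weight of $\un L$. Thus $\un L(\w)_\mu$ is a finite-dimensional space on which, by the previous paragraph, the $\Bsz$-action (through $\varphi_\w$) agrees with the $\Bdw$-action (through $\phi_\z$) on the tensor product $\bn V(\z)$ of evaluation $\gld[t]$-modules.

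With this dictionary in place the theorem follows from \cite{FFRy}. Because $\mu_\w$ is the diagonal matrix $\mathrm{diag}(w_1,\ldots,w_d)$, it is regular exactly when $w_1,\ldots,w_d$ are pairwise distinct, while the evaluation points on the $\gld$-side are $z_1,\ldots,z_{p+q+m+n}$. Hence the cyclicity of $\Bdw$ on $\bn V(\z)$ for regular $\mu_\w$ and pairwise distinct $z_j$ translates, through the identification $\phi_\z(\Bdw)=\varphi_\w(\Bsz)$ on $\F_\mu$, into cyclicity of $\Bsz$ on $\un L(\w)_\mu$ for pairwise distinct $w_a$ and pairwise distinct $z_j$; likewise diagonalizability with a simple spectrum for generic $\mu_\w$ (i.e.\ generic $\w$) and generic $\z$ transfers to $\Bsz$, since the two Bethe algebras act by literally the same operators on $\F_\mu$. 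I would also record that the diagonal twists make $\Bdw$ and $\Bsz$ preserve the respective weight spaces, so that these statements are internally consistent.

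The main obstacle is the middle step: pinning down the $\gld[t]$-module structure on $\F_\mu$ precisely, that is, verifying that each species contributes the asserted finite-dimensional irreducible $\gld$-module at the correct evaluation point $z_j$, and that the Howe-duality identification $\F_\mu=\un L(\w)_\mu=\bn V(\z)$ is compatible with the operator identity of \thmref{main2} upon restriction to the $\mu$-weight space. A secondary point to check is that the cyclicity and simple-spectrum results of \cite{FFRy} are available in the reductive case $\fg=\gld$ relevant here (which follows from the $\sld$ case by incorporating the abelian part, on which the Bethe algebra acts semisimply), and that the genericity loci on the two sides are matched so that a single generic choice of $(\w,\z)$ simultaneously realizes the simple spectrum.
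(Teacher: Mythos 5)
Your overall strategy --- realize $\un L(\w)_\mu$ inside the Fock space $\F$, invoke the Bethe duality of \thmref{Bethe_duality} to replace the $\Bsz$-action by the $\Bdw$-action on a tensor product of finite-dimensional evaluation $\gld[t]$-modules, and then quote the cyclicity and simple-spectrum results of Feigin--Frenkel--Rybnikov --- is exactly the route the paper takes. However, your identification step contains a genuine error. You take $L_a$ to be ``the Fock module of the $a$-th copy of oscillators,'' i.e.\ $\F^{(a)}$, and conclude that $\un L(\w)_\mu=\F_\mu$. But $\F^{(a)}$ is not a highest weight module: it decomposes as $\F^{(a)}=\bigoplus_{r\in\Z}V_r$, and the modules $L_a$ in the statement are the individual irreducible summands $V_{\ga_a}$ of \eqref{depth1}. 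Hence $\un L=V_{\ga_1}\otimes\cdots\otimes V_{\ga_d}$ is only one summand of the decomposition \eqref{F-decomp}, and for $d\ge 2$ the space $\F_\mu$ is strictly larger than $\un L(\w)_\mu$: fixing the $\gls$-weight $\mu$ fixes the total degree in each of the $p+q+m+n$ species but not the distribution of those degrees among the $d$ copies. (For instance, with $p=m=1$, $q=n=0$, $d=2$, the monomials $x^1_1 y^1_1$ and $x^1_1 y^2_1$ have the same $\gls$-weight but lie in the distinct summands $V_0\otimes V_0$ and $V_1\otimes V_{-1}$.) As written, your argument establishes cyclicity and simple spectrum for the wrong, larger space.

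The missing ingredient is precisely \propref{sws}: $\un L(\w)_\mu$ is the $\ga$-weight space $\bn N(\z)_\ga$ of $\bn N(\z)=\F_{(1)}^{\ov\mu_1}(z_1)\otimes\cdots\otimes\F_{(p+q+m+n)}^{\ov\mu_{p+q+m+n}}(z_{p+q+m+n})$, where $\ga=\sum_{a}\ga_a\ep_a\in\fh_d^*$ records the degrees of the chosen summands and the $\ov\mu_i$ are as in \eqref{ovmu}. The repair is routine but must be made explicit: since $\fh_d\subseteq\gld^{\mu_\w}$, the algebra $\Bdw$ preserves $\gld$-weight spaces, so cyclicity of $\bn N(\z)$ descends to $\bn N(\z)_\ga$, and diagonalizability with a simple spectrum on $\bn N(\z)$ restricts to the invariant subspace $\bn N(\z)_\ga$. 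With that correction --- and keeping your (correct) observations that $\mu_\w$ is regular exactly when the $w_a$ are pairwise distinct and that each $\F_{(i)}^{\ov\mu_i}$ is a finite-dimensional irreducible $\gld$-module --- the rest of your argument goes through and coincides with the paper's proof.
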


\subsection{Outline}
We organize the paper as follows.
\secref{Pre} is devoted to reviewing the classical Lie (super)algebras of types $\mf{a,c,d}$ and some background material needed in this paper.
In \secref{BA-classical}, we investigate the action of the Bethe algebra for $\fg$ on a tensor product of evaluation $\fg[t]$-modules, where $\fg$ is the classical Lie algebra of type $\mf{a,c,d}$. We prove \thmref{main1} and obtain \corref{maincor}.
In \secref{Duality},  we establish \thmref{main2}, i.e., the Bethe duality of $(\gld, \gls)$.
We give an application of the duality and demonstrate \thmref{main3}, which provides positive evidence for \conjref{mainconj}.
In Appendix \ref{Bers-cdet}, we prove \propref{omega-Ber}, which gives an equality relating $\phi_\z  \cdet (\Ldw)$ to $\varphi_\w  \Bers \big( \Lsz \big)$.

\vskip 0.5cm
\noindent{\bf Notations.}
Throughout the paper, the symbol $\Z$ (resp., $\N$ and $\Zp$) stands for the set of all (resp., positive and non-negative) integers, the symbol $\C$ (resp., $\R$) for the field of complex (resp., real) numbers, and the symbol $\Z_2:=\{\bz, \bo\}$ for the field of integers modulo 2.
 Unless otherwise stated, all vector spaces, algebras, tensor products, etc., are over $\C$.
{\bf We fix $d \in \N$ and $p,q,m,n \in \Zp$.}

\section{Preliminaries} \label{Pre}

In this section, we review the classical Lie (super)algebras of types $\mf{a,c,d}$, evaluation modules, and pseudo-differential operators.

\subsection{Classical Lie (super)algebras} \label{classical}

\subsubsection{The general linear Lie (super)algebra} \label{gl}

For $p, q, m, n\in \Zp$ that are not all zero, let
$$
\I=\{i \in \N \, | \, 1 \le i \le p+q+m+n\}.
$$
For $i \in \I$, define $|i| \in \Z_2$ by
$$
|i|=\begin{cases}
\, \bz & \,\, \mbox{if} \ \  i \in \{1, \ldots, p \} \cup \{ p+q+1, \ldots, p+q+m\},\\
\, \bo & \,\, \mbox{otherwise}.
\end{cases}
$$
Let $\{e_i \, | \, i \in \I\}$ be a basis for the superspace $\C^{p|q}\oplus\C^{m|n}$ such that $\{e_i \, | \, 1\le i \le p+q\}$ and
$\{e_{p+q+i} \, | \, 1\le i \le m+n\}$ are respectively the standard homogeneous bases for $\C^{p|q}$ and $\C^{m|n}$. In other words, the parity of $e_i$ is given by
$|e_i|=|i|$ for $i \in \I$.

For any $i, j \in \I$, let $E^i_j$ denote the $\C$-linear endomorphism on $\C^{p|q}\oplus\C^{m|n}$ defined by
$$
E^i_j (e_k)=\delta_{j, k} e_i \quad \mbox{ for $k \in \I$,}
$$
where $\delta$ is the Kronecker delta. The parity of $E^i_j$ is given by $|E^i_j|=|i|+|j|$.
The superspace of $\C$-linear endomorphisms on $\C^{p|q}\oplus\C^{m|n}$ has a natural structure of a Lie superalgebra, called the \emph{general linear Lie (super)algebra} and denoted by $\gls$. The commutation relations of $\gls$ are given by
$$
[E^i_j, E^k_l]=\de_{j,k} E^i_l-(-1)^{(|i|+|j|)(|k|+|l|)}\de_{i, l} E^k_j \qquad \mbox{for $i, j, k, l \in \I$.}
$$
Note that $\{ E^i_j \, | \, i, j \in \I \}$ is a homogeneous basis for $\gls$.

Let $\mf{b}_{\pqmn}=\bigoplus_{\substack{ i,j \in \I, i \le j} }  \C E^i_j$ be a Borel subalgebra of $\gls$.
The corresponding Cartan subalgebra $\fh_{\pqmn}$ has a basis $\{ E^i_i \, | \, i \in \I \}$.
We denote the dual basis in $\fh_{\pqmn}^{*}$ by $\{ \ep_i \, | \, i \in \I \}$.
The parity of $\ep_i$ is given by $|\ep_i|=|i|$.

For $m=d$ and $p=q=n=0$, we define
\begin{equation} \label{eij-a}
\gld:=\gl_{d|0} \qquad \text{and} \qquad e^{\mf{a}}_{ij}:=e_{ij}:=E^i_j \qquad \mbox{for $i, j=1, \ldots, d$.}
\end{equation}

\subsubsection{The symplectic Lie algebra}

The subalgebra of $\gl_{2d}$ that preserves the nondegenerate skew-symmetric bilinear form on $\C^{2d}$ corresponding to the matrix
$$\left[\begin{matrix}
0 & J_d     \\
-J_d& 0
\end{matrix}\right]$$
is called the \emph{symplectic Lie algebra} and is denoted by $\spd$.
Hereafter, $J_d$ is the $d \times d$ reversion matrix defined by $(J_d)_{ij}=1$ if $i+j=d+1$ and $(J_d)_{ij}=0$ otherwise.

The Lie algebra $\spd$ is simple and is spanned by
\begin{equation} \label{eij-c}
e^{\mf c}_{i j} :=e_{ij}-\vep_i \vep_j e_{j^\prime  i^\prime}
\end{equation}
for $i, j =1, \ldots, 2d$, where $i^\prime:=2d-i+1$, and $\vep_i:=1$ if $1 \le i \le d$ and $\vep_i:=-1$ if $d+1 \le i \le 2d$.

\subsubsection{The orthogonal Lie algebra}

The subalgebra of $\gl_{2d}$ that preserves the nondegenerate symmetric bilinear form on $\C^{2d}$ corresponding to the matrix
$$\left[\begin{matrix}
0 & J_d     \\
J_d& 0
\end{matrix}\right]$$
is called the \emph{orthogonal Lie algebra} and is denoted by $\sod$.
It is spanned by
\begin{equation} \label{eij-d}
e^{\mf d}_{i j} :=e_{i j}-e_{j^\prime  i^\prime}
\end{equation}
for $i, j =1, \ldots, 2d$, where $i^\prime:=2d-i+1$, and is a simple Lie algebra if $d \ge 3$.

\subsubsection{Triangular decompositions} \label{triang}

The general linear Lie (super)algebras $\gls$ and $\gld$ are called the classical Lie (super)algebras of types $\mf{a}$ while the Lie algebras $\spd$ and $\sod$ are called the classical Lie algebras of types $\mf{c}$ and $\mf{d}$, respectively.
Let
\begin{equation*} \label{acd}
\fg^\mf{a}=\gld,\qquad \fg^\mf{c}=\spd, \qquad \fg^\mf{d}=\sod.
\end{equation*}
For $\xx=\mf{a,c,d}$, the classical Lie algebra $\fg^\xx$ has the triangular decomposition
\begin{equation} \label{tri-dec}
\fg^\xx =\nm^\xx \oplus \fh^\xx \oplus \np^\xx,
\end{equation}
where $\fh^\xx : =\bigoplus_{i=1}^d \C e^\xx_{i i}$ is the \emph{standard Cartan subalgebra} of $\fg^\xx$, and $\np^\xx:=\sum_{\substack{i<j} } e^\xx_{ij}$ and $\nm^\xx:=\sum_{\substack{i<j } } \C e^\xx_{ji}$ are nilpotent subalgebras of $\fg^\xx$.
We will suppress the superscript $\xx$ if there is no danger of confusion.

\subsection{Evaluation modules} \label{current}

For any Lie superalgebra $\fg$, we denote by $U(\fg)$ the universal enveloping algebra of $\fg$.
For any even variable $t$, the loop algebra $\fg[t, t^{-1}]:=\fg\otimes \C[t, t^{-1}]$ is defined to be the Lie superalgebra with commutation relations
$$
\hspace{1cm}  \big[A \otimes t^r, B \otimes t^s \big]=[A, B] \otimes t^{r+s} \qquad \mbox{for $A, B \in \fg\,$ and $\, r, s \in \Z$.}
$$
Here $[A, B]$ is the supercommutator of $A$ and $B$.
We will use the notation
$$
\hspace{1cm} A[r]:=A \otimes t^r \qquad \mbox{for $A \in \fg\,$ and $\, r \in \Z$.}
$$
The current algebra $\fg[t]:=\fg \otimes \C[t]$ and the superalgebra $t^{-1} \fg[t^{-1}]:=\fg \otimes t^{-1}\C[t^{-1}]$ are subalgebras of $\fg[t, t^{-1}]$ defined in an obvious way.
We identify $\fg$ with the subalgebra $\fg\otimes 1$ of constant polynomials in $\fg[t]$ and hence $U(\fg) \subseteq U(\fg[t])$.

For $\al \in \C$, we have an \emph{evaluation homomorphism} $\ev_\al: U(\fg[t]) \longrightarrow U(\fg)$ given by
$$
\ev_\al (A[r])= \alpha^r A \qquad \mbox{for $A \in \fg\,$ and $\, r \in \Zp$}.
$$
Any $\fg$-module $M$ is a $\fg[t]$-module via $\ev_\al$, called the \emph{evaluation module} and denoted by $M(\alpha)$.
For any $A \in \fg$ and any even variable $z$, we set
$$
A(z)=\sum_{r=0}^{\infty} A[r]z^{-r-1}.
$$
 It is a formal power series in $z^{-1}$ with coefficients in $\fg[t]$ and acts on $M(\alpha)$ by
$$
A(z) v=\frac{A v}{z-\alpha} \qquad \mbox{for $v \in M(\alpha)$.}
$$
Here each rational function in $z$ represents its power series expression at $\infty$.

Let $\ell \in \N$ and $\z=(z_1, \ldots, z_\ell) \in \C^\ell$. Let $\Delta^{(\ell-1)}: U(\fg[t]) \longrightarrow U(\fg[t])^{\otimes \ell}$ be the $(\ell-1)$-fold coproduct on $U(\fg[t])$.
We have the homomorphism
\begin{equation} \label{evz}
\evz:=(\ev_{z_1} \otimes \ldots \otimes \ev_{z_\ell}) \circ \Delta^{(\ell-1)} : U( \fg[t]) \longrightarrow U(\fg)^{\otimes {\ell}},
\end{equation}
given by
$$
\evz (A[r])=\sum_{i=1}^\ell z_i^r A^{(i)} \qquad \mbox{for $A \in \fg\,$ and $\, r \in \Zp$}.
$$
Here $A^{(i)}:=\underbrace{1\otimes\cdots\otimes1\otimes \stackrel{i}{A}\otimes1\otimes\cdots\otimes1}_{\ell}$ for $i=1, \ldots, \ell$.
We call $\evz$ the \emph{evaluation map at $\z$}.

For any $\fg$-modules $M_1, \ldots, M_\ell$, let
\begin{equation} \label{unM}
\bn M=M_1 \otimes \cdots \otimes M_\ell.
\end{equation}
The tensor product $\bn M$ is a $\fg[t]$-module via $\evz$, which coincides with the tensor product of evaluation $\fg[t]$-modules
\begin{equation} \label{unMz}
\bn M(\z):=M_1(z_1) \otimes \cdots \otimes M_\ell(z_\ell).
\end{equation}
As $\fg$-modules, $\bn M$ and $\bn M(\z)$ are isomorphic via the identity map.
For any $A \in \fg$, the series $A(z)$ acts on $\bn M(\z)$ by
$$
\hspace{1.5cm}  A(z) v=\sum_{i=1}^\ell \frac{A^{(i)}v}{z-z_i} \qquad \mbox{for $v \in \bn M(\z)$}.
$$

\subsection{Pseudo-differential operators} \label{pseudo}

Let $\cA$ be an associative unital superalgebra and $z$ an even variable.
We denote by $\cA [\tns[z]\tns]$ (resp., $\cA \blb z \brb$) the superalgebra of formal power series (resp., formal Laurent series) in $z$ with coefficients in $\cA$.
Let $\cA \zpz$ denote the set of all formal series of the form
$$
  \sum_{j=-\infty}^s \sum_{i=-\infty}^r a_{i j} z^i \pz^j,
$$
where $r, s \in \Z$ and $a_{i j} \in \cA$.
We may endow $\cA \zpz$ with a superalgebra structure using the rules:
\begin{equation}\label{rule}
\pz \pz^{-1}=\pz^{-1} \pz=1, \quad\pz^{i} z^j=\sum_{k=0}^{\infty}  \binom{i}{k} \binom{j}{k} k! \,   z^{j-k} \, \pz^{i-k},  \quad \mbox{for $i, j \in \Z$.}
\end{equation}
Here, e.g., $\disp{\binom{i}{k}:=\frac{i(i-1)\ldots (i-k+1)}{k!}}$.
The superalgebra $\cA \zpz$ is called the superalgebra of pseudo-differential operators over $\cA$.

Let $\omega$ be an anti-involution on $\cA$. In particular, $\omega^2=1$ and $\omega(ab)= \omega(b) \omega(a)$ for $a, b \in \cA$.
We readily see that $\omega$ extends to a $\C$-linear automorphism
$\womg : \cA \zpz \longrightarrow \cA \zpz$
defined by
\begin{equation}
\womg(a z^i \pz^j) =\omega(a) z^j \pz^i
\end{equation}
for $a \in \cA$ and $i, j \in \Z$.
It is straightforward to prove the following.

\begin{prop}\label{omega}
The map $\womg$ is an anti-involution on $\cA \zpz$.
\end{prop}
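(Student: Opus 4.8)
The goal is to show that the $\C$-linear map $\womg : \cA \zpz \longrightarrow \cA \zpz$, defined by $\womg(a z^i \pz^j) = \omega(a) z^j \pz^i$, is an anti-involution, where $\omega$ is an anti-involution on the base superalgebra $\cA$. The plan is to verify the two defining properties separately: first that $\womg^2 = 1$, and second that $\womg$ reverses products, i.e.\ $\womg(PQ) = \womg(Q)\womg(P)$ for all pseudo-differential operators $P, Q \in \cA \zpz$.

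The involutivity $\womg^2 = 1$ is immediate on the spanning monomials: applying the formula twice gives $\womg^2(a z^i \pz^j) = \womg(\omega(a) z^j \pz^i) = \omega^2(a) z^i \pz^j = a z^i \pz^j$, using $\omega^2 = 1$; since $\womg$ is $\C$-linear and these monomials span $\cA \zpz$, this suffices. For the anti-multiplicativity, I would argue by bilinearity that it is enough to check the identity on monomials $P = a z^i \pz^j$ and $Q = b z^k \pz^l$ with $a, b \in \cA$ homogeneous. The essential content, and the one computation worth carrying out carefully, is the interaction of $\womg$ with the straightening rule \eqref{rule} governing how $\pz$ past $z$. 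Concretely, one must compute $PQ$ by moving $\pz^j$ past $z^k$ using $\pz^j z^k = \sum_{t \ge 0} \binom{j}{t}\binom{k}{t} t!\, z^{k-t}\pz^{j-t}$, then apply $\womg$ to the result, and compare it with the product $\womg(Q)\womg(P) = \big(\omega(b) z^l \pz^k\big)\big(\omega(a) z^j \pz^i\big)$ computed by the same rule.

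The main obstacle is purely bookkeeping: one must confirm that the combinatorial coefficients $\binom{j}{t}\binom{k}{t}t!$ produced when straightening $PQ$ match those produced when straightening $\womg(Q)\womg(P)$ after the exponents of $z$ and $\pz$ have been swapped by $\womg$. The key symmetry is that the reordering coefficient in \eqref{rule} is invariant under exchanging the two exponents involved (here $j$ and $k$), which is exactly what makes the $z \leftrightarrow \pz$ transpose reverse products correctly; this is the structural reason an anti-involution on $\cA$ lifts to one on $\cA\zpz$. I would also flag that when $\cA$ is a genuine superalgebra one should, strictly speaking, track Koszul signs arising from the parities of $a$ and $b$; since $z$ and $\pz$ are even, no extra signs enter from the differential part, and the sign $\omega(ab) = \omega(b)\omega(a)$ on the coefficient side already encodes whatever is needed, so the super-structure introduces no new difficulty.

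Once the monomial identity $\womg(PQ) = \womg(Q)\womg(P)$ is established, extending it to arbitrary elements of $\cA\zpz$ follows by $\C$-bilinearity, completing the proof that $\womg$ is an anti-involution. I expect the entire argument to be short, with the only genuine step being the verification that the two straightened expressions agree coefficient by coefficient.
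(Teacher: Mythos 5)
Your argument is correct and is exactly the direct verification the paper intends (the paper omits the proof, calling it straightforward): the entire content is the symmetry of the coefficient $\binom{i}{k}\binom{j}{k}k!$ in \eqref{rule} under exchanging $i$ and $j$, which you correctly identify as the structural reason the $z \leftrightarrow \pz$ transpose reverses products. The only implicit ingredient is that elements of $\cA$ commute with $z$ and $\pz$, which is built into the definition of $\cA \zpz$, and your remark that no Koszul signs intervene is consistent with the paper's convention $\omega(ab)=\omega(b)\omega(a)$.
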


\section{Bethe algebras for classical Lie algebras} \label{BA-classical}

In this section, we examine the behavior of the Feigin--Frenkel center and the Bethe algebra for the classical Lie algebra $\fg$ of type $\mf{a,c,d}$ under a $*$-structure.
Our primary purpose is to study the diagonalization of the Bethe algebra for $\fg$ on a tensor product of unitarizable $\fg$-modules.

\subsection{Feigin--Frenkel centers} \label{FFC}

Let $t$ be an even variable, and let $\fg$ be a simple Lie algebra or the general linear Lie (super)algebra.
We denote by $\Vac(\fg)$ the {\em vacuum module at the critical level} over the central extension $\wh{\fg}:=\fg[t,t^{-1}]\oplus \C K$ of the loop algebra $\fg[t,t^{-1}]$.
By the Poincar\'e--Birkhoff--Witt theorem, $\Vac(\fg)$ can be identified with $U(\fgm)$ as (super)spaces.
Moreover, the $\wh{\fg}$-module structure on $\Vac(\fg)$ allows us to define a vertex algebra structure on $\Vac(\fg)$, called the {\em universal affine vertex algebra at the critical level} (see \cite{Fr07, FBZ, Mo18, MR} for details).

\sloppy
The center of the vertex algebra $\Vac(\fg)$, denoted by $\fzg$, is called the \emph{Feigin--Frenkel center}.
It is given by
$$
\fzg=\setc*{\! v \in \Vac(\fg)}{\fg[t] v=0 \!}.
$$
Any element of $\fzg$ is called a \emph{Segal--Sugawara vector}.
Also, the (super)algebra $U(\fgm)$ is equipped with the (even) derivation $\T:=-d/dt$ defined by
\begin{equation} \label{der}
\T(1)=0 \qquad \text{and}\qquad \T(A[-r])=r A[-r-1]
\end{equation}
for $A \in \fg$ and $r \in \N$.
Note that $\T$ corresponds to the translator operator on the vertex algebra $\Vac(\fg)$, and that $\fzg$ can be viewed as a commutative subalgebra of $U(\fgm)$ and is $\T$-invariant.

The following result is a famous theorem of Feigin and Frenkel.

 \begin{thm}  [{\cite{FF}}]    \label{FF}
For any simple Lie algebra $\fg$, the Feigin--Frenkel center $\fzg$ has a complete set of Segal--Sugawara vectors $a_1, \ldots, a_d$, where $d$ denotes the rank of $\fg$.
That is, the set $\{\T^r a_i \, | \, i=1, \ldots, d, \, r \in \Zp \}$ is algebraically independent, and $\fzg=\C[\T^r a_i \, | \, i=1, \ldots, d, \, r \in \Zp ]$.
\end{thm}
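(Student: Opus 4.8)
The plan is to prove the theorem in two logically separate halves: an upper bound on the size of $\fzg$ extracted from its classical (commutative) limit, and a matching lower bound asserting that enough central elements genuinely exist. First I would equip the vacuum module $\Vac(\fg) \cong U(\fgm)$ with its PBW filtration and pass to the associated graded, which is the symmetric algebra $S(\fgm)$ on $t^{-1}\fg[t^{-1}]$, i.e.\ the algebra of functions on the relevant arc space. The derivation $\T$ and the $\fg[t]$-action both descend to this commutative algebra, and the defining condition $\fg[t]\,v=0$ for membership in $\fzg$ passes, at the level of leading symbols, to $\fg[t]$-invariance for the induced action. This yields the inclusion $\mathrm{gr}\,\fzg \subseteq S(\fgm)^{\fg[t]}$.

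Second I would compute this invariant subalgebra explicitly. By Chevalley's theorem $S(\fg)^\fg = \C[P_1, \ldots, P_d]$ is a free polynomial algebra on $d=\mathrm{rank}\,\fg$ homogeneous generators. The classical invariant-theory computation for the arc space (Beilinson--Drinfeld) then identifies $S(\fgm)^{\fg[t]}$ with the free polynomial algebra on the symbols $\bar a_i$ of the $P_i$ together with all of their $\T$-derivatives $\T^r \bar a_i$. This simultaneously furnishes the algebraic independence of the family $\{\T^r \bar a_i\}$ at the graded level and pins down the invariant algebra precisely.

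Third --- and this is the crux --- I would show that each symbol $\bar a_i$ lifts to an actual Segal--Sugawara vector $a_i \in \fzg$. Given such lifts, the filtered-algebra formalism closes the argument: one has $\C[\T^r a_i] \subseteq \fzg$, while $\mathrm{gr}\,\C[\T^r a_i] = \C[\T^r \bar a_i] = S(\fgm)^{\fg[t]} \supseteq \mathrm{gr}\,\fzg$, so the associated graded algebras coincide, forcing $\fzg = \C[\T^r a_i]$; moreover algebraic independence of the symbols propagates to algebraic independence of the $\T^r a_i$ themselves. For type $\mf{a}$ the lifts are available concretely as the coefficients of a column determinant of the form $\cdet(\pa_t + \cdots)$ --- precisely the determinantal mechanism exploited elsewhere in this paper --- and a direct commutator computation verifies centrality. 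For a general simple $\fg$ the existence is the substance of the Feigin--Frenkel computation, most cleanly obtained through the free-field (Wakimoto) realization, in which $\fzg$ is realized as the joint kernel of the screening operators and that kernel is shown, by a cohomological argument available only at the critical level, to be a free differential algebra on $d$ generators.

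I expect the main obstacle to be exactly this third step: showing $\mathrm{gr}\,\fzg$ fills out the \emph{entire} invariant algebra rather than a proper subalgebra. The upper bound and the independence statement are essentially commutative algebra once the invariant computation is granted, whereas producing central elements requires either the explicit determinantal formulas in type $\mf{a}$ or the full screening-operator cohomology in general, and it is here that the critical level $-h^\vee$ enters in an essential way --- at any other level the obstruction to lifting the naive Sugawara element into the center is nonzero.
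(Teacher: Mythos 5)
The paper does not prove this statement: it is imported verbatim as the Feigin--Frenkel theorem, cited to [FF] (with \remref{CM} noting the $\gld$ case from Chervov--Molev), so there is no in-paper argument to compare yours against. Judged on its own terms, your outline is the standard and correct strategy, essentially the one in Frenkel's book [Fr07] and Molev's [Mo18]: (1) the PBW filtration gives $\mathrm{gr}\,\fzg \subseteq S(\fgm)^{\fg[t]}$; (2) the Beilinson--Drinfeld/Eisenbud--Frenkel computation of invariants on the jet space (resting on Chevalley and on the geometry of the adjoint quotient via the Kostant section) identifies $S(\fgm)^{\fg[t]}$ with the free differential polynomial algebra $\C[\T^r \bar{a}_i]$; (3) lifting the symbols to genuine Segal--Sugawara vectors, after which the filtered-algebra sandwich forces equality and transfers algebraic independence. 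You correctly locate the entire difficulty in step (3) and correctly note that the critical level is where the obstruction to centrality vanishes. The only caveat is that step (3) is invoked rather than executed: the Wakimoto/screening-operator construction and the identification of the joint kernel of the screenings with a free differential algebra on $d$ generators (equivalently, the Feigin--Frenkel isomorphism $\fzg \cong \mc{W}(\Lfg)$ that this paper uses later) is the theorem's actual content, and your sketch stands or falls with that black box. As a blind reconstruction of the architecture of the proof of a deep cited result, this is accurate and well organized; as a self-contained proof it is not, and you are candid about exactly where it is incomplete.
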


\begin{rem} \label{CM}
\thmref{FF} holds as well for the general linear Lie algebra (see \cite[Theorem 3.1]{CM} (and also \cite{CT}).
\end{rem}

Let $\fg=\gld$, $\spd$, or $\sod$, which has rank $d$.
For $\xx=\mf{a,c,d}$, recall the elements $e^{\xx}_{ij}$ defined in \eqref{eij-a}, \eqref{eij-c} and \eqref{eij-d}, respectively.
We denote by $\fgr$ the $\R$-subspace of $\fg$ spanned by all the $e^{\mf x}_{ij}$ and by $\fh_\R$ the $\R$-subspace of $\fh$ spanned by all the $e^{\mf x}_{ii}$.

Let $\ugtr$ be the $\R$-subalgebra of $U(\fg[t])$ generated by $A[r]$ for $A \in \fgr$ and $r \in \Zp$, and let $\ugmr$ be the $\R$-subalgebra of $U(\fgm)$ generated by $A[-r]$ for $A \in \fgr$ and $r \in \N$.
The $\R$-algebras $U(\fh[t])_\R$ and $U(t^{-1}\fh[t^{-1}])_\R$ are defined analogously.

By \cite[Theorem 3.1]{CM} for type $\mf{a}$ and \cite[Theorem 2.3]{Mo21} for type $\mf{c,d}$ (see also \cite{Mo13, Ya}), there exist
\begin{equation} \label{cSS}
S_1, \ldots, S_d \in \fzg,
\end{equation}
which admit explicit formulas and form a complete set of Segal--Sugawara vectors for $\fzg$.
An important observation is that $S_1, \ldots, S_d \in \ugmr$.
In the case of type $\mf{a}$, the Segal--Sugawara vectors $S_1, \ldots, S_d$ will be described in detail in \eqref{cdet}.

\subsection{Bethe algebras} \label{BA}

Let $\fg$ be a simple Lie algebra or the general linear Lie algebra.
For any $\mu \in \fg^*$, there is an algebra homomorphism
$$
 \Psi^\mu :  U(\fgm)   \longrightarrow U(\fg[t])  \dsz
$$
given by
 \begin{equation} \label{psi-mu}
  \Psi^\mu(A[-r])=A \otimes (t-z)^{-r}+\de_{r,1} \mu(A), \qquad \mbox{for $A \in \fg \,$ and $\, r \in \N$.}
\end{equation}
 The \emph{Bethe algebra $\Bgmu$ for $\fg$ with respect to $\mu$} is defined to be the subalgebra of $U(\fg[t])$ generated by the coefficients of $\Psi^\mu (S)$ for $S \in \fzg$.
For any $\Bgmu$-module $V$, let $(\Bgmu)_V$ denote the image of the Bethe algebra $\Bgmu$ in the endomorphism algebra $\End(V)$ of $V$. We call $(\Bgmu)_V$  the \emph{Bethe algebra} for $V$ with respect to $\mu$.

Fix $\ell \in \N$ and $\z:=(z_1, \ldots, z_\ell) \in \C^\ell$.
The Feigin--Frenkel center $\fzg$ also induces a distinguished subalgebra of $U(\fg)^{\otimes \ell}$ depending on $\mu$ and $\z$, which we now describe.
The evaluation map $\evz:  U(\fg[t])   \longrightarrow  U(\fg)^{\otimes \ell} $ at $\z$, defined in \eqref{evz}, extends to the homomorphism
 $$
\evz:  U(\fg[t])  \dsz \longrightarrow  U(\fg)^{\otimes \ell} \dsz,
 $$
 such that $\evz(z^{-1})=z^{-1}$.
We have the commutative diagram
$$
\begin{tikzcd}
U(\fgm) \arrow[rd, "\Psi^\mu_\z"'] \arrow[r, "\Psi^\mu"] & U(\fg[t])  \dsz \arrow[d, "\evz"] \\
&  U(\fg)^{\otimes \ell} \dsz
\end{tikzcd}
$$
where $\Psi^\mu_\z$ is defined by
$$
\Psi^\mu_\z(A[-r])=\sum_{i=1}^\ell \frac{A^{(i)}}{(z_i-z)^{r}}+\de_{r,1} \mu(A), \qquad \mbox{for $A \in \fg \,$ and $\, r \in \N$.}
$$
The \emph{Gaudin algebra $\Agmuz$ for $\fg$ with respect to $\mu$ and $\z$} is defined to be the subalgebra of $U(\fg)^{\otimes \ell}$ generated by the coefficients of $\Psi^\mu_\z(S)$ for $S \in \fzg$.
For any $\Agmuz$-module $V$, let $(\Agmuz)_V$ denote the image of $\Agmuz$ in $\End(V)$.
We call $(\Agmuz)_V$  the \emph{Gaudin algebra} for $V$ with respect to $\mu$ and $\z$.

Since $\fzg$ is commutative, so are the algebras $\Agmuz$ and $\Bgmu$.
Also, the actions of $\Agmuz$ and $\Bgmu$ on any tensor product of $\fg$-modules are equivalent.
More precisely, for any $\fg$-modules $M_1, \ldots, M_\ell$, the Gaudin algebra $\Agmuz \subseteq U(\fg)^{\otimes \ell}$ acts on $\bn M$ while the Bethe algebra $\Bgmu \subseteq U(\fg[t])$ acts on $\bn M$ via $\evz$ (see \eqref{unM} and \eqref{unMz} for the notations), and it is evident that
 $$
\big( \Agmuz \big)_{\bn M}=\big(\Bgmu \big)_{\bn M(\z)}.
 $$
In this paper, we will restrict our attention to $\Bgmu$.

 For $\mu \in \fg^*$, we denote by $\fg^\mu$ the centralizer of $\mu$ in $\fg$.
Moreover, $\mu$ is called \emph{regular} if $\fg^\mu$ has dimension equal to the rank of $\fg$.
Set
$$
\Xl=\setc*{\! (z_1, \ldots,z_\ell)\in \C^\ell}{z_i\not=z_j \,\, \mbox{for any $i\not=j$} \!},
$$
called the \emph{configuration space} of $\ell$ distinct points on $\C^\ell$.

Let $M_1, \ldots, M_\ell$ be $\fg$-modules, and let $\z \in \Xl$.
If $\fh \subseteq \fg^\mu$, then $\Bgmu$ commutes with $\fh$ and hence acts on the $\ga$-weight space $\bn M(\z)_\ga$ of $\bn M(\z)$ for any $\ga \in \fh^*$ (\cite[Proposition 4]{Ry06}).

Let $V_1, \ldots, V_\ell$ be finite-dimensional irreducible $\fg$-modules.
We have the following remarkable results about the action of $\Bgmu$ on $\bn V(\z)$.

\begin{thm}  [{\cite[Corollary 5]{FFRy}}]  \label{g-cyclic}
The space $\bn V(\z)$ is a cyclic $\Bgmu$-module for any regular $\mu \in \fg^*$ and $\z \in \Xl$.
\end{thm}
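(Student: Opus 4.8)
The plan is to reduce the statement to a question about joint eigenspaces and then exploit a self-duality of the module. Since the paper records the identification $\big(\Agmuz\big)_{\bn V}=\big(\Bgmu\big)_{\bn V(\z)}$, I would work with the Gaudin algebra $B:=\big(\Agmuz\big)_{\bn V}\subseteq\End(W)$, where $W:=\bn V=V_1\otimes\cdots\otimes V_\ell$, and prove that $W$ is a cyclic $B$-module. As $B$ is a commutative finite-dimensional $\C$-algebra, it decomposes along its finitely many characters $\chi$ (the joint eigenvalues), giving $W=\bigoplus_\chi W_\chi$ into generalized joint eigenspaces, each a faithful module over the local algebra $B_\chi$. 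Cyclicity of $W$ is then equivalent to cyclicity of each $W_\chi$, i.e. to $\dim_\C\big(W_\chi/\mf{m}_\chi W_\chi\big)=1$ for every $\chi$, where $\mf{m}_\chi$ is the maximal ideal of $B_\chi$.

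It is tempting to argue that $B$ is maximal commutative (self-centralizing) in $\End(W)$ and to invoke a general principle, but maximal commutativity alone does \emph{not} force cyclicity: over a non-Gorenstein commutative algebra one can exhibit faithful self-centralizing modules whose top is two-dimensional. The correct bridge is \emph{self-duality}. In the main case of regular semisimple $\mu$ I would conjugate $\mu$ into $\fh^*$ (which only replaces $W$ by an isomorphic module and preserves cyclicity), equip each $V_i$ with its nondegenerate contravariant (Shapovalov) form relative to the Chevalley anti-involution $\tau$, and form the tensor-product form on $W$. The key technical point is that the generators of $B$ are self-adjoint for this form: the Segal--Sugawara vectors generating $\fzg$ are $\tau$-symmetric (in the spirit of the transpose-symmetry underlying the determinantal formulas and of the anti-involution $\womg$ of \secref{pseudo}), the diagonal $\mu$-shift acts by the scalars $\mu(A)$ and is automatically self-adjoint, and $\mu\circ\tau=\mu$ once $\mu\in\fh^*$. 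Consequently $W$ is isomorphic to its dual as a $B$-module, so $W$ is cyclic if and only if it is \emph{co-cyclic}, i.e. if and only if every honest joint eigenspace $\{\,w\in W_\chi:\mf{m}_\chi w=0\,\}$ is one-dimensional.

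It therefore remains to prove one-dimensionality of the honest joint eigenspaces. Here I would use the Feigin--Frenkel description of the spectrum: joint eigenvalues of the Gaudin algebra correspond to opers on $\mathbb{P}^1$ with regular singularities at $z_1,\dots,z_\ell$ (prescribed by the highest weights of the $V_i$) and an irregular singularity at $\infty$ encoding $\mu$, and to each such oper the Bethe ansatz attaches a one-dimensional space of eigenvectors. Regularity of $\mu$ guarantees that the irregular singularity at $\infty$ is nondegenerate, which is precisely what excludes higher-dimensional honest eigenspaces. Combined with the previous paragraph, this yields cyclicity.

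The main obstacle is this last step, and specifically its uniformity: one needs the eigenspace-dimension bound for \emph{all} pairwise distinct $z_i$ and \emph{all} regular $\mu$ (including non-semisimple ones, where the form argument above must be obtained as a degeneration), not merely for generic parameters, where the whole spectrum is simple and cyclicity is immediate. Controlling the scheme of opers with an irregular singularity at $\infty$ over this entire locus is the technical heart of \cite{FFRy}. A secondary point requiring care is the rigorous verification that the chosen generators of $\fzg$ are genuinely $\tau$-self-adjoint after the diagonal shift, so that the self-duality invoked in the second paragraph holds on the nose.
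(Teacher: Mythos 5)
First, a point of comparison: the paper does not prove this statement at all --- it is imported verbatim as \cite[Corollary 5]{FFRy}, so there is no in-paper argument to measure your proposal against. What can be assessed is whether your sketch would constitute an independent proof, and here there is a genuine gap.

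Your reduction is sound as far as it goes: passing to the image $B$ of $\Agmuz$ in $\End(\bn V)$, decomposing into generalized joint eigenspaces, and observing that maximal commutativity alone does not give cyclicity are all correct, and the self-duality of $\bn V$ via the tensor Shapovalov form (so that cyclic $\Leftrightarrow$ cocyclic $\Leftrightarrow$ every honest joint eigenspace is one-dimensional) is exactly the mechanism the present paper itself exploits in Section 3.5, citing \cite[Theorem 9.1]{MTV06} and \cite{Lu}. The problem is the last step. Once the self-duality is in place, the statement ``every honest joint eigenspace of $\Agmuz$ on $\bn V$ is one-dimensional for all regular $\mu$ and all $\z\in\Xl$'' is \emph{logically equivalent} to the cyclicity you are trying to prove --- it is not a smaller statement. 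Justifying it by saying that joint eigenvalues correspond to opers and that ``the Bethe ansatz attaches a one-dimensional space of eigenvectors'' to each oper is circular: the identification of the spectrum with a scheme of opers, and the one-dimensionality of the corresponding eigenspaces, are precisely what \cite{FFRy} establish (together with, not prior to, cyclicity), and they do so by a quite different mechanism --- an openness/degeneration argument on the parameter space of $(\mu,\z)$ reducing to a distinguished $\mu$ where the claim can be checked directly --- rather than by counting eigenvectors oper by oper. Your closing paragraph concedes that this step is ``the technical heart of \cite{FFRy},'' which is an accurate self-diagnosis: what you have written is a correct and useful reformulation of the theorem (cyclicity $\Leftrightarrow$ one-dimensional honest eigenspaces), plus an unproven appeal to the very result being cited. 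A secondary, smaller gap is the extension of the self-adjointness/self-duality argument from $\mu\in\fh^*$ regular semisimple to arbitrary regular $\mu$, which you flag as ``a degeneration'' without carrying it out.
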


 \begin{thm}  [{\cite[Corollary 6]{FFRy}}]    \label{g-diag}
The Bethe algebra $\Bgmu$ is diagonalizable with a simple spectrum on $\bn V(\z)$ for generic $\mu \in \fg^*$ and generic $\z \in \Xl$.
\end{thm}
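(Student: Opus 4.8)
\emph{Strategy.} The plan is to factor the statement into two parts: cyclicity, which is already available, and semisimplicity, which I would establish at a single well-chosen point and then propagate by a genericity argument. The crucial elementary observation is purely linear-algebraic: if a commutative algebra $B$ acts on a finite-dimensional space $W$ so that $W$ is a cyclic $B$-module and the action is semisimple (diagonalizable), then $B$ has simple spectrum on $W$. Indeed, cyclicity forces the image $\bar B \subseteq \End(W)$ to satisfy $\dim_\C \bar B = \dim_\C W$, since for a cyclic vector $v$ the orbit map $\bar B \to W$, $b \mapsto bv$, is an isomorphism; meanwhile semisimplicity decomposes $W$ into joint eigenspaces on each of which $\bar B$ acts by a single character, so counting dimensions forces every joint eigenspace to be one-dimensional. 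By \thmref{g-cyclic}, $\bn V(\z)$ is already cyclic over $\Bgmu$ for every regular $\mu$ and every $\z \in \Xl$, so it remains only to produce diagonalizability of $\Bgmu$ on $\bn V(\z)$ at suitable parameters.

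\emph{Genericity.} Next I would argue that simple spectrum, once it holds at one point, holds on a Zariski-dense open set of $(\mu,\z)$. The generators of $\Bgmu$ acting on the fixed space $W=V_1\otimes\cdots\otimes V_\ell$ are the coefficients of $\evz\,\Psi^\mu(S)$ with $S \in \fzg$; as operators on $W$ these depend polynomially on $\mu$ and rationally on $\z$, with poles only along $z_i=z_j$, hence are regular on $\Xl$. Simple spectrum of the commutative image $\bar B$ is equivalent to the existence of a single element of $\bar B$ with pairwise distinct eigenvalues; for a generic linear combination of finitely many generators this is the nonvanishing of the discriminant of its characteristic polynomial, a rational function $\Delta(\mu,\z)$. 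A single good point certifies $\Delta \not\equiv 0$, and then $\{\Delta \ne 0\}$ is a Zariski-dense open subset of $\fg^* \times \Xl$; that is, simple spectrum holds for generic $\mu$ and generic $\z$.

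\emph{A good point via self-adjointness.} To exhibit one point where $\Bgmu$ is diagonalizable, I would pass to a compact real form and use positivity. Each finite-dimensional irreducible $V_i$ carries a positive-definite Hermitian form contravariant for a $*$-structure $\sig$ of $\fg$ (so that $\langle X u, w \rangle = \langle u, \sig(X) w\rangle$ for $X \in \fg$), and these combine into a positive-definite Hermitian form on $\bn V(\z)$. I would then choose $\z \in \R^\ell \cap \Xl$ and $\mu \in \fg^*_{\sig}$ so that the generating operators $\evz\,\Psi^\mu(S_k)$ become self-adjoint with respect to this form; any commuting family of self-adjoint operators on a finite-dimensional Hermitian space is simultaneously diagonalizable. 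Picking such a point that is moreover regular guarantees cyclicity there, and the linear-algebra fact above then yields simple spectrum at that point, closing the chain. I expect the main obstacle to be exactly the verification of self-adjointness for the \emph{higher} Hamiltonians: one must control how $\sig$ interacts with the Segal--Sugawara generators $S_1,\ldots,S_d$ and with evaluation at real points. The decisive inputs are that the $S_k$ may be taken in the real subalgebra $\ugmr$ (see \eqref{cSS}) and that $\sig$, being an anti-linear anti-involution, intertwines $\Psi^\mu$ with its Hermitian adjoint when $\mu \in \fg^*_{\sig}$ and $\z$ is real---precisely the mechanism that the present paper isolates and exploits in \thmref{main1}.
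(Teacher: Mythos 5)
The paper does not prove this statement: Theorem~\ref{g-diag} is imported verbatim from \cite[Corollary 6]{FFRy}, so there is no internal proof to compare against. Your proposal is a sound reconstruction of the standard argument (and is in fact essentially how the result is proved in \cite{FFRy}): the linear-algebra reduction ``cyclic $+$ diagonalizable $\Rightarrow$ simple spectrum'' is correct, since for a cyclic vector $v$ the orbit map $b \mapsto bv$ is injective as well as surjective (if $bv=0$ then $b\bar B v = \bar B b v = 0$, so $b=0$ on $W$), giving $\dim \bar B = \dim W$, which together with semisimplicity forces one-dimensional joint eigenspaces; and the discriminant/Zariski-density step correctly propagates the property from one certified point, because the single element $\sum_{i\in F} c_i b_i(\mu,\z)$ with nonvanishing discriminant already forces simple spectrum of the whole commutative image wherever it has distinct eigenvalues. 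The one place that deserves explicit care is your third step: the $*$-structure $\sig$ of \eqref{sig} used throughout Section~\ref{BA-classical} is the \emph{noncompact} one, for which every nontrivial unitarizable highest weight module is infinite-dimensional, so it cannot furnish positive-definite contravariant forms on the finite-dimensional $V_i$. You must instead run Proposition~\ref{anti-cent} and Lemma~\ref{Hermitian} for the compact anti-linear anti-involution $e^{\xx}_{ij} \mapsto e^{\xx}_{ji}$ (extended anti-linearly); your phrase ``pass to a compact real form'' signals this, and the paper's arguments do go through verbatim there since $S_1,\ldots,S_d \in \ugmr$, the quadratic vector $\qc$ is fixed, and the Harish-Chandra projection argument only uses that the involution fixes $\fh_\R$ pointwise and swaps $\np$ and $\nm$. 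With that substitution made explicit, and a regular real diagonal $\mu_\w$ chosen as the good point so that Theorem~\ref{g-cyclic} applies there, the chain closes and the proof is complete. What your route buys, beyond recovering the cited result, is exactly the observation the paper exploits for Theorem~\ref{main1}: the same self-adjointness mechanism, applied to the noncompact $\sig$, yields diagonalizability (though not cyclicity, hence not simple spectrum) on unitarizable modules.
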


\subsection{$*$-structures}  \label{star}

A \emph{$*$-superalgebra} is a pair $(\cA, \sig)$, where $\cA$ is an associative superalgebra, and $\sig: \cA \longrightarrow \cA$ is an even anti-linear anti-involution, called a $*$-structure on $\cA$.
A homomorphism $f: (\cA,\sig) \longrightarrow (\cA^\prime, \sig^\prime)$ of $*$-superalgebras is a homomorphism of superalgebras satisfying $\sig^\prime \circ f = f \circ \sig$.
Let $(\cA,\sig)$ be a $*$-superalgebra, and let $V$ be an $\cA$-module.  A Hermitian form $\langle\cdot|\cdot\rangle$ on $V$ is said to be \emph{contravariant} if $\langle av_1 | v_2 \rangle=\langle v_1 |\sig(a)v_2 \rangle$ for all $a\in \cA$ and $v_1, v_2 \in V$.
An $\cA$-module equipped with a positive definite contravariant Hermitian form is called a \emph{unitarizable} $\cA$-module.

A \emph{Lie superalgebra with $*$-structure} is a pair $(\cG, \sig)$, where $\cG$ is a Lie superalgebra, and $\sig: \cG \longrightarrow \cG$ is an even anti-linear anti-involution, called a $*$-structure on $\cG$.
Note that $\sig$ is a $*$-structure on $\cG$ if and only if the natural extension of $\sig$ to $U(\cG)$ is a $*$-structure on $U(\cG)$.
A homomorphism $f: (\cG, \sig) \longrightarrow (\cG^\prime, \sig^\prime)$ of Lie superalgebras with $*$-structures is a homomorphism of Lie superalgebras satisfying $\sig^\prime \circ f = f \circ \sig$.

Let $(\cG, \sig)$ be a Lie superalgebra with $*$-structure, and let $V$ be a $\cG$-module. A Hermitian form $\langle\cdot|\cdot\rangle$ on $V$ is said to be \emph{contravariant} if $\langle A v_1 |v_2 \rangle=\langle v_1 |\sig(A)v_1 \rangle$ for all $A \in \cG$ and $v_1, v_2 \in V$.
A $\cG$-module equipped with a positive definite contravariant Hermitian form is called a \emph{unitarizable} $\cG$-module.
It is clear that all unitarizable $\cG$-modules are unitarizable $U(\cG)$-modules, and vice versa, and that all unitarizable $\cG$-modules are completely reducible.
We refer the reader to \cite{ChL24, CLZ, LZ06} for related discussions.

The Lie superalgebra $\gls$ admits a $*$-structure $\sig_p: \gls \longrightarrow \gls$ (see \cite[Section 3.2]{CLZ}), defined by

\begin{equation} \label{star-gl}
\hspace{1.5cm} \sum_{i, j \in \I} c_{ij} E^i_j \mapsto  \sum_{i, j \in \I} (-1)^{[i]+[j]} \bar{c}_{ij} E^j_i \qquad \mbox{for $i, j \in \I$},
\end{equation}
where $\bar{c}_{ij}$ denotes the complex conjugate of $c_{ij} \in \C$, and
$$
[i]:=  \begin{cases}
       0& \mbox{if}  \ \  1 \le i \le p,\\
       1 &  \mbox{otherwise}.
     \end{cases}
     $$
After specializing to $q=n=0$, we obtain a $*$-structure
\begin{equation} \label{star-pm}
\sig_p: \gl_{p+m} \longrightarrow \gl_{p+m}.
\end{equation}
Taking $p=m=d$, we obtain a $*$-structure $\sig_d$ on $\gl_{2d}$.
Evidently, the map $\sig_d$ restricts to $*$-structures
\begin{equation} \label{star-type-cd}
\sig_d: \spd \longrightarrow \spd \qquad \text{and} \qquad \sig_d: \sod \longrightarrow \sod.
\end{equation}

We remark that the Lie algebras with $*$-structures $(\gl_{p+m}, \sig_p)$, $(\spd, \sig_d)$, and $(\sod, \sig_d)$ correspond to the Hermitian symmetric spaces of the noncompact type referred to as HS.1, HS.3, and HS.5, respectively, in \cite[p. 37]{ES}.

For the rest of this section, we let $\fg=\glpm$, $\spd$, or $\sod$. Set
\begin{equation} \label{sig}
\sig =
\begin{cases}
\sig_p &\quad \mbox{if $\fg=\glpm$,}\\
\sig_d  &\quad  \mbox{if $\fg=\spd$ or $\sod$}.
\end{cases}
\end{equation}
For $\xx=\mf{a,c,d}$, we clearly have $\sig(e^{\xx}_{ij})=(-1)^{[i]+[j]}e^{\xx}_{ji}$ for all $i,j$. In particular, $\sig(e^{\xx}_{ii})=e^{\xx}_{ii}$ for all $i$.
Note that every non-trivial unitarizable highest weight $\fg$-module with respect to $\sig$ is infinite-dimensional (see \cite{EHW}).

Let $k$ denote the dimension of $\fg$.
Consider the Segal--Sugawara vector
$$
\qc:=\sum_{i=1}^{k} v^i [-1] v_i [-1]\in \fzg,
$$
where $\{v_1, \ldots, v_k \}$ is a basis for $\fg$, and $\{v^1, \ldots, v^k \}$ is the dual basis with respect to a nondegenerate invariant symmetric bilinear form on $\fg$.

\begin{thm} [{\cite[Theorem 1]{Ry08}}]  \label{centralizer}
The Feigin--Frenkel center $\fzg$ is the centralizer of $\qc$ in $U(\fgm)$.
\end{thm}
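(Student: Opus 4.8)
The plan is to prove the two inclusions between $\fzg$ and the centralizer $Z(\qc)$ of $\qc$ in $U(\fgm)$ separately, passing to the associated graded Poisson algebra for the harder one. For the inclusion $\fzg \subseteq Z(\qc)$ I would simply recall that the Feigin--Frenkel center is a \emph{commutative} subalgebra of $U(\fgm)$ (this is built into its vertex-algebraic origin: on the center of the vertex algebra $\Vac(\fg)$ the normally ordered product agrees with the associative product). Since $\qc \in \fzg$ by construction, every element of $\fzg$ commutes with $\qc$, so $\fzg \subseteq Z(\qc)$.

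The reverse inclusion is the substantive part. Both $\fzg$ and $Z(\qc)$ are subspaces of $U(\fgm)$ compatible with the PBW filtration, so it suffices to establish the inclusion of symbols $\mathrm{gr}\, Z(\qc) \subseteq \mathrm{gr}\, \fzg$: together with $\fzg \subseteq Z(\qc)$ (which gives $\mathrm{gr}\,\fzg \subseteq \mathrm{gr}\, Z(\qc)$) this forces $\mathrm{gr}\,\fzg = \mathrm{gr}\, Z(\qc)$, whence, since $\fzg \subseteq Z(\qc)$, the equality $\fzg = Z(\qc)$ holds in each filtration degree.

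I would then work in the Poisson algebra $S(\fgm) = \mathrm{gr}\, U(\fgm)$, whose bracket is induced by the Lie bracket of $\fgm$, and write $\overline{\qc}$ for the symbol of $\qc$. If $[u,\qc]=0$ in $U(\fgm)$, then its leading symbol, namely the Poisson bracket $\{\overline{u},\overline{\qc}\}$, vanishes; hence $\mathrm{gr}\, Z(\qc)$ is contained in the Poisson centralizer $Z_{\mathrm{cl}}(\overline{\qc})$ of $\overline{\qc}$ in $S(\fgm)$. On the other hand, the classical Feigin--Frenkel center $\overline{\fz} := \mathrm{gr}\,\fzg$ is Poisson-commutative (symbols of commuting elements Poisson-commute) and contains $\overline{\qc}$, so $\overline{\fz} \subseteq Z_{\mathrm{cl}}(\overline{\qc})$. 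The theorem therefore reduces to the purely classical statement
\[
  Z_{\mathrm{cl}}(\overline{\qc}) = \overline{\fz}.
\]

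The main obstacle is the nontrivial inclusion $Z_{\mathrm{cl}}(\overline{\qc}) \subseteq \overline{\fz}$: a priori the Poisson centralizer of a single element is neither commutative nor small, and the content of the theorem is that the distinguished quadratic Casimir symbol $\overline{\qc}$ is rigid enough to pin down exactly the classical center. The route I would take is to exploit the regularity of $\overline{\qc}$ (a Kostant-type property of the quadratic Casimir): first show that $Z_{\mathrm{cl}}(\overline{\qc})$ is in fact Poisson-commutative, and then identify it with $\overline{\fz}$ by matching against the explicit generators --- the symbols $\overline{S_i}$ together with all their $\T^r$-derivatives furnished by \thmref{FF} and \remref{CM}. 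This is precisely the classical counterpart of the uniqueness statement that the higher Gaudin Hamiltonians are determined by commutativity with the quadratic one. Lifting the resulting equality of symbols back through the PBW filtration then yields $Z(\qc) = \fzg$, as required.
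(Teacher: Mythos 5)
First, a point of comparison: the paper does not prove this statement at all. It is quoted verbatim from \cite[Theorem 1]{Ry08} and used as a black box (its only role here is in the proof of \propref{anti-cent}), so there is no in-paper proof to measure your attempt against; what you have written is a reconstruction of Rybnikov's theorem.

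Your reductions are correct and do follow the strategy of the actual proof: the easy inclusion $\fzg \subseteq Z(\qc)$ from commutativity of $\fzg$, the passage to the PBW-associated graded, and the reduction of everything to the classical statement $Z_{\mathrm{cl}}(\overline{\qc}) \subseteq \mathrm{gr}\,\fzg$ in the Poisson algebra $S(\fgm)$. The genuine gap is that this classical statement --- which you correctly flag as ``the main obstacle'' --- \emph{is} the entire content of the theorem, and your proposal offers no argument for it. ``Show that $Z_{\mathrm{cl}}(\overline{\qc})$ is Poisson-commutative and then identify it with $\overline{\fz}$ by matching against the explicit generators'' is a restatement of the goal, not a proof: a priori the Poisson centralizer of a single quadratic element could be very large, and even granting Poisson-commutativity you would still need to know that $\overline{\fz}$ is a \emph{maximal} Poisson-commutative subalgebra of $S(\fgm)$ to conclude equality, which is essentially equivalent to what is being proved. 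The missing inputs are (1) the identification $\mathrm{gr}\,\fzg = S(\fgm)^{\fg[t]}$ (the ``no quantization obstruction'' theorem of Feigin--Frenkel/Beilinson--Drinfeld), and (2) a direct computation --- in Rybnikov's argument, a leading-term analysis with respect to an auxiliary grading of $S(\fgm)$ --- showing that any element Poisson-commuting with $\overline{\qc}$ is annihilated by the $\fg[t]$-action, i.e.\ $Z_{\mathrm{cl}}(\overline{\qc}) \subseteq S(\fgm)^{\fg[t]}$. Without (2) in particular, the argument does not close.
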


We denote by $U(\fgm)^\fh$ the $\fh$-centralizer of $U(\fgm)$. It admits a direct sum decomposition
\begin{equation}\label{HC-decom}
U(\fgm)^\fh=U(\fhm) \oplus \mc N,
\end{equation}
where
\begin{eqnarray}
\mc N&:=&U(\fgm)^\fh \cap U(\fgm) \fnmm  \label{N1} \\
&\hspace{1mm}=&U(\fgm)^\fh \cap \fnpm U(\fgm). \label{N2}
\end{eqnarray}
The projection of $U(\fgm)^\fh$ onto $U(\fhm)$ yields an affine version of the Harish-Chandra homomorphism
$$
\hc: U(\fgm)^\fh \longrightarrow U(\fhm).
$$
Note that $\fzg$ can be viewed as a subalgebra of $U(\fgm)^\fh$
while the classical $\mc W$-algebra $\mc W(\Lfg)$ associated to the Langlands dual Lie algebra $\Lfg$ can be viewed as a subalgebra of $U(\fhm)$ (see \cite[Sections 8.1.2--8.1.3]{Fr07} and \cite[Section 13.1]{Mo18}).
Furthermore, the restriction map
$$
\hc: \fzg \longrightarrow \mc W(\Lfg)
$$
is an isomorphism (see \cite[Theorem 8.1.5]{Fr07} and \cite[Theorem 13.1.1]{Mo18}).
It is called the \emph{Feigin--Frenkel isomorphism}.

The $*$-structure $\sig$ on $\fg$, defined in \eqref{sig}, extends naturally to a $*$-structure $\sig: U(\fg) \longrightarrow U(\fg)$, which gives rise to a $*$-structure $\sig: U(\fg[t, t^{-1}]) \longrightarrow U(\fg[t, t^{-1}])$, defined by
$$
\hspace{1.5cm} \sig(A[r])=\sig(A)[r] \qquad \mbox{for $A \in \fg$ and $r \in \Z$}.
$$
Evidently, $\sig$ restricts to $*$-structures on $U(\fgm)$ and on $U(\fg[t])$.

Recall the complete set $\{S_1, \ldots, S_d\}$ of Segal--Sugawara vectors for $\fzg$ mentioned in \eqref{cSS}.
Here we denote the rank of $\glpm$ by $d$.

We have the following proposition, which is proved using the same strategy as the proof of \cite[Proposition 2.4]{Lu}.

\begin{prop} \label{anti-cent}
The $*$-structure $\sig: U(\fgm) \longrightarrow U(\fgm)$ fixes $S_1, \ldots, S_d$.
\end{prop}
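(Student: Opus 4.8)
The plan is to exploit the characterization of the Feigin--Frenkel center as the centralizer of $\qc$ (\thmref{centralizer}), the affine Harish--Chandra homomorphism $\hc$ attached to the decomposition \eqref{HC-decom}, and the crucial fact recorded after \eqref{cSS} that $S_1, \ldots, S_d \in \ugmr$. The strategy has three steps: show that $\sig$ preserves $\fzg$; show that $\sig$ commutes with $\hc$ on $U(\fgm)^\fh$; and show that each $\hc(S_i)$ has real coefficients and is therefore $\sig$-fixed, whence $\sig(S_i)=S_i$ by injectivity of $\hc$ on $\fzg$. To begin, I would check that $\sig(\qc)=\qc$. Writing $\qc=\sum_i v^i[-1]v_i[-1]$ and using that $\sig$ is an anti-involution with $\sig(A[-1])=\sig(A)[-1]$, this reduces to invariance of the canonical Casimir element $\sum_i v^i\otimes v_i\in\fg\otimes\fg$ under $\sig\otimes\sig$ composed with the flip; for the trace form this amounts to the anti-isometry property $\langle\sig(a),\sig(b)\rangle=\ov{\langle a,b\rangle}$ on basis elements, where the sign $(-1)^{2([i]+[j])}=1$ causes no trouble, and similarly in types $\mf{c,d}$. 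Since $\sig$ is an anti-automorphism, $\sig([x,\qc])=[\qc,\sig(x)]$, so $\sig$ carries the centralizer of $\qc$ to itself; by \thmref{centralizer} this gives $\sig(\fzg)\subseteq\fzg$.

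Next I would verify that $\sig$ preserves each summand of \eqref{HC-decom}. Because $\sig$ fixes every $e^\xx_{ii}$ and intertwines $\operatorname{ad}\fh$ up to sign on weight spaces, it preserves the weight-zero subalgebra $U(\fgm)^\fh$ and fixes the abelian subalgebra $U(\fhm)$ monomial by monomial. Since $\sig(\nm)=\np$ we have $\sig(\fnmm)=\fnpm$, and applying the anti-automorphism $\sig$ to \eqref{N1} yields, via \eqref{N2}, that $\sig(\mc N)=U(\fgm)^\fh\cap\fnpm U(\fgm)=\mc N$. Consequently $\sig$ commutes with the projection $\hc:U(\fgm)^\fh\to U(\fhm)$, that is, $\hc\circ\sig=\sig\circ\hc$ on $U(\fgm)^\fh$.

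Finally, since the straightening relations defining \eqref{HC-decom} involve only the real structure constants of the basis $\{e^\xx_{ij}\}$, the projection $\hc$ is defined over $\R$ and sends $\ugmr\cap U(\fgm)^\fh$ into $U(\fhm)_\R$. As $S_i\in\ugmr\cap\fzg$, its image $\hc(S_i)$ lies in $U(\fhm)_\R$; being a real polynomial in the commuting, $\sig$-fixed generators $e^\xx_{ii}[-r]$, it is fixed by the anti-linear map $\sig$. Therefore $\hc(\sig(S_i))=\sig(\hc(S_i))=\hc(S_i)$, and since $\hc$ restricts to the Feigin--Frenkel isomorphism $\fzg\xrightarrow{\sim}\mc W(\Lfg)$ it is injective on $\fzg$; as both $S_i$ and $\sig(S_i)$ lie in $\fzg$, I conclude $\sig(S_i)=S_i$.

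The main obstacle is the reality input in the last step: the argument genuinely requires $S_i\in\ugmr$ together with the fact that $\hc$ respects the real forms, for without knowing that $\hc(S_i)$ has real coefficients the anti-linear $\sig$ would fix $\fzg$ only setwise rather than fixing each $S_i$ individually. Verifying $\sig(\qc)=\qc$ uniformly across types $\mf{a,c,d}$, by choosing dual bases compatible with the invariant form, is the only computational point and is routine.
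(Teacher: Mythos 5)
Your proposal is correct and follows essentially the same route as the paper: both use $\sig(\qc)=\qc$ together with \thmref{centralizer} to get $\sig(S_i)\in\fzg$, then exploit $S_i\in\ugmr$, the decomposition \eqref{HC-decom} with the swap of \eqref{N1} and \eqref{N2} under $\sig$, and the injectivity of $\hc$ on $\fzg$ to conclude. Your repackaging via the identity $\hc\circ\sig=\sig\circ\hc$ is just a mild reorganization of the paper's direct computation $\hc(\sig(S))=H=\hc(S)$ from $S=H+N$.
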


\begin{proof}
Let $S \in \{S_1, \ldots, S_d\}$.
Since $\sig \big(\qc \big)=\qc$, we have
$$
\sig(S) \qc=\sig(S) \sig( \qc)=\sig(\qc S)=\sig(S \qc)=\sig(\qc) \sig( S)=\qc \sig(S).
$$
That is, $\sig(S)$ lies in the centralizer of $\qc$. By \thmref{centralizer}, $\sig(S) \in \fzg$.

Recall that $S_1, \ldots, S_d \in \ugmr$. According to \eqref{HC-decom}, $S=H+N$ for some $H \in U(t^{-1}\fh[t^{-1}])_\R$ and $N \in \mc N$.
Clearly, $\sig(H)=H$. By \eqref{N1} and \eqref{N2}, $\sig(N) \in \mc N$. 
Thus, $\hc(\sig(S))=H$, and hence $\hc(\sig(S))=\hc(S)$.
We have $\sig(S)=S$ by the injectivity of $\hc: \fzg \longrightarrow \mc W(\Lfg)$.
\end{proof}

Recall the map $\Psi^\mu :  U(\fgm)   \longrightarrow U(\fg[t])  \dsz$ defined by \eqref{psi-mu}.
For $\mu \in  \fg^*$, let
\begin{equation} \label{cSS-im}
\smu_i(z)=\Psi^\mu (S_i) \qquad \mbox{for $i=1, \ldots, d$.}
\end{equation}
We define $\ov\mu \in  \fg^*$ by $\ov\mu(A)=\overline{\mu(A)}$ for $A \in \fg$.
That is, $\ov\mu$ is the complex conjugation of $\mu$.
Set
$$
 \fg^*_{\sig}=\setc*{\mu \in  \fg^*}{ \mu \circ \sig = \ov\mu}.
$$

\begin{prop} \label{C-inv-Gau-R}
For any $\mu \in  \fg^*_{\sig}$, the $*$-structure $\sig: U(\fg[t]) \longrightarrow U(\fg[t])$ fixes all coefficients of the series $\smu_1(z), \ldots, \smu_d(z)$.
\end{prop}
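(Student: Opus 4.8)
The plan is to extend the $*$-structure $\sig$ from $U(\fg[t])$ to the coefficient algebra $U(\fg[t])\dsz$, show that this extension intertwines with $\Psi^\mu$, and then feed in \propref{anti-cent}. First I would extend $\sig\colon U(\fg[t]) \to U(\fg[t])$ to a map on $U(\fg[t])\dsz$ by letting it act coefficient-wise, i.e.\ by declaring $\sig$ to fix the central even formal variable $z$. Since $z$ commutes with everything, exactly as in the verification behind \propref{omega} this yields an anti-linear anti-involution on $U(\fg[t])\dsz$. With this in place, the assertion to be proved is precisely that this extended $\sig$ fixes each series $\smu_i(z)$, since $\sig\big(\smu_i(z)\big)=\smu_i(z)$ holds if and only if $\sig$ fixes every coefficient.

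The heart of the argument is the intertwining identity $\sig\circ\Psi^\mu=\Psi^\mu\circ\sig$, where on the source $\sig$ denotes the $*$-structure of $U(\fgm)$. I would establish this on the generators $A[-r]$ of $U(\fgm)$. Expanding $(t-z)^{-r}=\sum_{k\ge 0}c_{r,k}\,t^k z^{-r-k}$ in powers of $z^{-1}$, the coefficients $c_{r,k}$ are rational (indeed integers up to sign), hence real; consequently anti-linearity carries $\sig$ harmlessly through these scalars and replaces $A$ by $\sig(A)$, so that applying the extended $\sig$ to $A\otimes(t-z)^{-r}$ returns $\sig(A)\otimes(t-z)^{-r}$. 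The scalar correction $\de_{r,1}\mu(A)$ is meanwhile conjugated to $\de_{r,1}\ov\mu(A)$. Thus
\[
\sig\big(\Psi^\mu(A[-r])\big)=\sig(A)\otimes(t-z)^{-r}+\de_{r,1}\ov\mu(A),
\]
and this equals $\Psi^\mu\big(\sig(A)[-r]\big)=\Psi^\mu\big(\sig(A[-r])\big)$ exactly when $\ov\mu(A)=\mu(\sig(A))$, that is, precisely because $\mu\in\fg^*_{\sig}$. So the hypothesis $\mu\circ\sig=\ov\mu$ enters at this single point, the scalar term, and is indispensable there.

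To pass from the generators to all of $U(\fgm)$, I would use that $\Psi^\mu$ is an algebra homomorphism while $\sig$ is an anti-homomorphism on both $U(\fgm)$ and the extended target: if the intertwining identity holds for $X$ and $Y$, then for $XY$ one computes $\sig\big(\Psi^\mu(XY)\big)=\sig\big(\Psi^\mu(Y)\big)\sig\big(\Psi^\mu(X)\big)=\Psi^\mu(\sig(Y))\Psi^\mu(\sig(X))=\Psi^\mu(\sig(Y)\sig(X))=\Psi^\mu(\sig(XY))$, the order-reversals on the two sides matching up. Hence $\sig\circ\Psi^\mu=\Psi^\mu\circ\sig$ on all of $U(\fgm)$. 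Applying this to $S_i$ and invoking $\sig(S_i)=S_i$ from \propref{anti-cent} gives $\sig\big(\smu_i(z)\big)=\Psi^\mu(\sig(S_i))=\Psi^\mu(S_i)=\smu_i(z)$, which is exactly the claim that $\sig$ fixes every coefficient of $\smu_1(z),\ldots,\smu_d(z)$.

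The only genuinely delicate points, both routine, are this anti-automorphism bookkeeping—one must confirm that the order reversal on the source of $\Psi^\mu$ is matched by the order reversal produced by the extended $\sig$ on the target, which is what makes the generator-level identity propagate multiplicatively despite $\Psi^\mu$ preserving order and $\sig$ reversing it—and the verification that the $z^{-1}$-expansion coefficients of $(t-z)^{-r}$ are real, so that anti-linearity introduces no spurious conjugation on the Lie-algebraic part. Everything else is formal.
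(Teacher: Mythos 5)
Your proposal is correct and follows essentially the same route as the paper: the paper likewise extends $\sig$ to $U(\fg[t])\dsz$ with $\sig(z^{-1})=z^{-1}$, asserts that the resulting square with $\Psi^\mu$ commutes for $\mu\in\fg^*_{\sig}$, and concludes via \propref{anti-cent}. You merely spell out the generator-level verification and the anti-homomorphism bookkeeping that the paper leaves implicit.
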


\begin{proof}
We may extend $\sig: U(\fg[t]) \longrightarrow U(\fg[t])$ to a $*$-structure $\sig: U(\fg[t])  \dsz  \longrightarrow U(\fg[t])  \dsz$ with $\sig(z^{-1})=z^{-1}$.
For any $\mu \in  \fg^*_{\sig}$, the diagram
\begin{eqnarray*} \label{anti-com}
\CD
U(\fgm) @> \Psi^\mu >>U(\fg[t]) \dsz \\
 @V \sig VV @VV \sig V \\
U(\fgm) @> \Psi^\mu  >>U(\fg[t])  \dsz\\
\endCD
\end{eqnarray*}
commutes.
This, together with \propref{anti-cent}, yields $\sig \big( \smu_i(z) \big)=\smu_i(z)$ for $i=1, \ldots, d$.
Thus, the proposition follows.
\end{proof}

\subsection{Unitarizable modules and diagonalization} \label{diagconj}

We will demonstrate the diagonalization of the Bethe algebra for $\fg$ on a tensor product of unitarizable $\fg$-modules under certain conditions.
We need the following well-known result, which follows from \thmref{FF} (see also \remref{CM}) and the fact that $\Psi^\mu ( \T(S) )=\frac{d}{dz} ( \Psi^\mu(S))$ for all $S \in \fzg$.

 \begin{prop} \label{generator}
For any $\mu \in \fg^*$, the Bethe algebra $\Bgmu$ equals the subalgebra of $U(\fg[t])$ generated by the coefficients of $\smu_i(z)$ for $i=1, \ldots, d$.
\end{prop}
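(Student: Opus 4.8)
The plan is to combine the differential generation of the Feigin--Frenkel center by the $S_i$ with the compatibility of $\Psi^\mu$ and the derivation $\T$. Write $\cB_0$ for the subalgebra of $U(\fg[t])$ generated by the coefficients of $\smu_1(z),\ldots,\smu_d(z)$; the goal is to prove $\cB_0=\Bgmu$. One inclusion is immediate: each $S_i$ lies in $\fzg$, so by the definition of $\Bgmu$ all coefficients of $\smu_i(z)=\Psi^\mu(S_i)$ already belong to $\Bgmu$, whence $\cB_0\subseteq\Bgmu$.

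For the reverse inclusion $\Bgmu\subseteq\cB_0$, I would start from \thmref{FF} (together with \remref{CM} in type $\mf a$), which asserts $\fzg=\C[\T^r S_i \mid i=1,\ldots,d,\ r\in\Zp]$. Thus an arbitrary $S\in\fzg$ is a polynomial in the elements $\T^r S_i$, and since $\Psi^\mu:U(\fgm)\longrightarrow U(\fg[t])\dsz$ is an algebra homomorphism, the series $\Psi^\mu(S)$ is the same polynomial expression evaluated in the series $\Psi^\mu(\T^r S_i)$. Because the $z^{-1}$-coefficients of a product of formal series in $\dsz$ are finite sums of products of the coefficients of the factors, it suffices to show that the coefficients of each $\Psi^\mu(\T^r S_i)$ lie in $\cB_0$; this reduces the whole claim to controlling the images of the $\T^r S_i$.

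This is where the identity $\Psi^\mu(\T S)=\tfrac{d}{dz}\Psi^\mu(S)$ enters. Iterating it gives $\Psi^\mu(\T^r S_i)=\tfrac{d^r}{dz^r}\smu_i(z)$, and differentiating a power series $\sum_k c_k z^{-k}$ in $z^{-1}$ merely rescales and reindexes its coefficients; hence every coefficient of $\tfrac{d^r}{dz^r}\smu_i(z)$ is a scalar multiple of a coefficient of $\smu_i(z)$ and so lies in $\cB_0$. Combining this with the previous paragraph shows that every coefficient of $\Psi^\mu(S)$, for arbitrary $S\in\fzg$, lies in $\cB_0$, giving $\Bgmu\subseteq\cB_0$ and thus the desired equality.

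I expect the only point requiring care to be the intertwining identity $\Psi^\mu\circ\T=\tfrac{d}{dz}\circ\Psi^\mu$, which is the stated input: one verifies it on the generators $A[-r]$ — where $\tfrac{d}{dz}\big(A(t-z)^{-r}\big)=rA(t-z)^{-r-1}$ matches $\Psi^\mu(\T(A[-r]))=\Psi^\mu(rA[-r-1])$, and in the case $r=1$ the shift term $\delta_{r,1}\mu(A)$ is annihilated by $\tfrac{d}{dz}$ just as $\T$ carries $A[-1]$ to $A[-2]$ with no $\delta$-contribution — and then propagates it by the Leibniz rule, using that $\T$ and $\tfrac{d}{dz}$ are derivations while $\Psi^\mu$ is multiplicative. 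Everything else is formal bookkeeping, so there is no deep structural obstacle; the substance lies entirely in \thmref{FF} and this derivation-compatibility.
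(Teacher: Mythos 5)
Your argument is correct and is exactly the route the paper intends: it sketches the proof of \propref{generator} as following from \thmref{FF} (with \remref{CM} for type $\mf a$) together with the identity $\Psi^\mu(\T(S))=\frac{d}{dz}\Psi^\mu(S)$, which is precisely the combination you flesh out. Your verification of the intertwining identity on the generators $A[-r]$, including the vanishing of the $\de_{r,1}\mu(A)$ term under $\frac{d}{dz}$, is the only point of substance and is handled correctly.
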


Let $M_1, \ldots, M_\ell$ be unitarizable $\fg$-modules (with respect to the $*$-structure $\sig$).
By definition, $M_i$ possesses a positive definite contravariant Hermitian form $\langle\cdot | \cdot\rangle_i$ for each $i=1, \ldots, \ell$.
Let $\langle\cdot | \cdot\rangle$ denote the tensor form of $\langle\cdot | \cdot\rangle_1, \ldots, \langle\cdot | \cdot\rangle_\ell$.
It is given by
$$
\llangle v_1 \otimes \ldots \otimes v_\ell \, | \, v^\prime_1 \otimes \ldots \otimes v^\prime_\ell \rrangle=\prod_{i=1}^\ell \llangle v_i \, | \, v^\prime_i \rrangle_i
$$
for $v_i, v^\prime_i \in M_i$, $1 \le i \le \ell$.

\begin{lem} \label{Hermitian}
Let $\mu \in  \fg^*_{\sig}$ and $\z \in \R^\ell$.
For $i=1, \ldots, d$, let $\bb$ be any coefficient of the series $\smu_i(z)$.
Then $\bb$ is a Hermitian operator on $\bn M(\z)$ with respect to $\langle\cdot | \cdot\rangle$.
\end{lem}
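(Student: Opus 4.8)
The plan is to exhibit the $*$-structure $\sig$ as the operation implementing the Hermitian adjoint of the $U(\fg[t])$-action, and then to read off the conclusion from \propref{C-inv-Gau-R}. Let $\pi : U(\fg[t]) \longrightarrow \End(\bn M(\z))$ denote the representation coming from $\evz$. The core claim is the sesquilinear identity
$$
\langle \pi(u) v \,|\, w \rangle = \langle v \,|\, \pi(\sig(u)) w \rangle \qquad \text{for all } u \in U(\fg[t]) \text{ and } v, w \in \bn M(\z),
$$
which says that $\pi(\sig(u))$ is the formal adjoint of $\pi(u)$ with respect to $\langle \cdot \,|\, \cdot \rangle$. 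Granting this, the lemma is immediate: since $\bb$ is a coefficient of $\smu_i(z)$ and $\mu \in \fg^*_\sig$, \propref{C-inv-Gau-R} gives $\sig(\bb) = \bb$, so that $\langle \bb v \,|\, w \rangle = \langle v \,|\, \bb w \rangle$, i.e.\ $\bb$ is Hermitian.

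First I would verify the identity on the algebra generators $A[r]$ (with $A \in \fg$, $r \in \Zp$). By the definition of the tensor form and the contravariance of each factor form $\langle \cdot \,|\, \cdot \rangle_i$ --- for which $\sig(A)$ is the adjoint of $A$ on $M_i$ --- a factor-by-factor computation shows that $\sig(A)^{(i)}$ is the adjoint of $A^{(i)}$ on $\bn M(\z)$. Now $\pi(A[r]) = \evz(A[r]) = \sum_{i=1}^\ell z_i^r A^{(i)}$, and because $\z \in \R^\ell$ each scalar $z_i^r$ is real and is therefore unaffected by being moved across the form; hence the adjoint of $\pi(A[r])$ is $\sum_{i=1}^\ell z_i^r \sig(A)^{(i)} = \evz(\sig(A)[r]) = \pi(\sig(A[r]))$, which is the claim on generators.

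To pass to arbitrary $u$ I would use that both operations are anti-multiplicative: the formal adjoint reverses products, while $\sig$ is an anti-involution, $\sig(XY) = \sig(Y)\sig(X)$. Since $\pi$ is an algebra homomorphism, an induction on the length of a monomial in the $A[r]$ then yields $\langle \pi(u) v \,|\, w \rangle = \langle v \,|\, \pi(\sig(u)) w \rangle$ for all $u$; the anti-linearity of $\sig$ is compatible with the way the formal adjoint conjugates scalars, so the identity extends to all linear combinations. The only delicate point --- and the sole place the hypotheses enter --- is the interplay of the conjugate-linearity of $\langle \cdot \,|\, \cdot \rangle$ with the evaluation scalars: reality of $\z$ is exactly what makes the generator-level adjoint land on $\sig(A[r])$ rather than on a conjugated variant, while $\mu \in \fg^*_\sig$ is what lets \propref{C-inv-Gau-R} fix the coefficients $\bb$. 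Beyond this bookkeeping of anti-linearity there is no hard analytic content.
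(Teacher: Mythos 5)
Your proposal is correct and follows essentially the same route as the paper: verify that $\sig$ implements the adjoint on monomials $A_1[r_1]\cdots A_k[r_k]$ using contravariance of the factor forms and reality of $\z$, extend by sesquilinearity and anti-linearity of $\sig$, and conclude via \propref{C-inv-Gau-R} that $\sig(\bb)=\bb$ forces $\bb$ to be Hermitian. Your write-up simply spells out in more detail the step the paper compresses into ``we can check directly.''
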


\begin{proof}
Using the contravariance of $\langle\cdot | \cdot\rangle_1, \ldots, \langle\cdot | \cdot\rangle_\ell$ and the hypothesis that $\z \in \R^\ell$, we can check directly that
\begin{eqnarray*}
\llangle A_1[r_1] \ldots A_k[r_k] v\, | \, v^\prime \rrangle \hspace{-2mm}
&=& \hspace{-2mm} \llangle  v\, | \, \sig(A_k)[r_k]  \ldots \sig(A_1)[r_1] v^\prime \rrangle\\
&=& \hspace{-2mm} \llangle  v\, | \, \sig \big(A_1[r_1] \ldots A_k[r_k] \big) v^\prime \rrangle
\end{eqnarray*}
for $v, v^\prime \in \bn M(\z)$, $A_1, \ldots, A_k \in \fg$, $r_1, \ldots, r_k \in \Zp$, and $k \in \N$.
This gives
$$
 \llangle \bb v\, | \, v^\prime \rrangle=\llangle v\, | \, \sig(\bb) v^\prime \rrangle=\llangle v\, | \, \bb v^\prime \rrangle,
$$
where the first equality is due to the sesquilinearity of $\langle\cdot | \cdot\rangle$ and the anti-linearity of $\sig$, and the second equality follows from \propref{C-inv-Gau-R}.
This proves the lemma.
\end{proof}

\begin{thm} \label{diag-finite-dim}
For any finite-dimensional $\Bgmu$-submodule $V$ of $\bn M (\z)$, the Bethe algebra $\Bgmu$ is diagonalizable on $V$ for any $\mu \in  \fg^*_{\sig}$ and $\z \in \R^\ell$.
\end{thm}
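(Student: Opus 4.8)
The plan is to derive the statement directly from the finite-dimensional spectral theorem, using the self-adjointness established in \lemref{Hermitian}. Since $\fzg$ is commutative, the Bethe algebra $\Bgmu$ is a commutative subalgebra of $U(\fg[t])$, and by \propref{generator} it is generated by the coefficients of the series $\smu_1(z), \ldots, \smu_d(z)$. Thus it suffices to simultaneously diagonalize these generating coefficients on $V$.

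First I would restrict the positive definite contravariant Hermitian form $\langle\cdot|\cdot\rangle$ on $\bn M(\z)$ to the finite-dimensional subspace $V$. As the restriction of a positive definite form to a subspace, it remains positive definite, so $V$ is a finite-dimensional complex inner product space. By hypothesis $V$ is a $\Bgmu$-submodule, so every generator of $\Bgmu$---that is, every coefficient of the series $\smu_i(z)$---preserves $V$ and restricts to an operator on it. By \lemref{Hermitian} each such coefficient is a Hermitian operator on $\bn M(\z)$, hence a Hermitian operator on $V$ with respect to the restricted form. Moreover, these operators pairwise commute, since $\Bgmu$ is commutative.

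It then remains to invoke the spectral theorem for a commuting family of Hermitian operators: on a finite-dimensional space equipped with a positive definite Hermitian form, any family of pairwise commuting Hermitian operators admits an orthonormal basis of common eigenvectors. Applying this to the coefficients of $\smu_1(z), \ldots, \smu_d(z)$ acting on $V$, we obtain a basis of $V$ consisting of joint eigenvectors for all these generators. Since these coefficients generate $\Bgmu$, every element of $\Bgmu$ acts diagonally in this basis; equivalently, $V$ decomposes as a direct sum of joint $\Bgmu$-eigenspaces, which is exactly the assertion that $\Bgmu$ is diagonalizable on $V$.

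The substantive content is already packaged in \lemref{Hermitian}, which converts the hypotheses $\mu \in \fg^*_{\sig}$ and $\z \in \R^\ell$ into self-adjointness of the generators; once this is available, no genuine obstacle remains. The only points worth a remark are that the restricted form stays positive definite (immediate) and that it is enough to simultaneously diagonalize the finitely many generating series rather than all of $\Bgmu$ at once---justified because simultaneous diagonalizability of a generating set of a commutative algebra on the finite-dimensional space $V$ forces diagonalizability of the whole algebra.
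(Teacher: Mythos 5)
Your proposal is correct and follows essentially the same route as the paper: restrict the positive definite contravariant form to $V$, use \lemref{Hermitian} to see that the generating coefficients of $\smu_1(z),\ldots,\smu_d(z)$ act as commuting Hermitian operators on $V$, and conclude diagonalizability of all of $\Bgmu$ via \propref{generator}. The paper's proof is just a terser version of the same argument, leaving the simultaneous-diagonalization step implicit.
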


\begin{proof}
For $i=1, \ldots, \ell$, let $\bb$ be any coefficient of the series $\smu_i(z)$.
By \lemref{Hermitian}, $\bb$ is a Hermitian operator on $V$ and hence is diagonalizable on $V$.
This proves the theorem in view of \propref{generator}.
\end{proof}

We expect the following conjecture to be true.

\begin{conj} \label{diag-conj}
For any finite-dimensional $\Bgmu$-submodule $V$ of $\bn M (\z)$, the Bethe algebra $\Bgmu$ is diagonalizable on $V$ for generic $\mu \in \fg^*$ and generic $\z \in \C^\ell$.
\end{conj}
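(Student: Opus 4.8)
The plan is to deduce the conjecture from \thmref{diag-finite-dim} by a constructibility argument that propagates diagonalizability from the Hermitian locus $\fg^*_{\sig} \times \R^\ell$ to a Zariski-dense open subset of the parameter space $\fg^* \times \C^\ell$. The starting point is that $\fg^*_{\sig}$ is a real form of $\fg^*$: the assignment $\mu \mapsto \ov{\mu \circ \sig}$ is an anti-linear involution of $\fg^*$ (using $\sig^2=1$), and its fixed-point set is exactly $\fg^*_{\sig}$. Hence $\dim_\R \fg^*_{\sig} = \dim_\C \fg^*$, so $\fg^*_{\sig}$ is a maximal totally real subspace and is Zariski-dense in $\fg^*$. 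Since $\R^\ell$ is likewise Zariski-dense in $\C^\ell$, the product $\fg^*_{\sig} \times \R^\ell$ is Zariski-dense in $\fg^* \times \C^\ell$, and \thmref{diag-finite-dim} supplies diagonalizability on this dense set. The task is then to convert ``diagonalizable on a Zariski-dense subset'' into ``diagonalizable generically.''

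The cleanest incarnation is the diagonal case $\mu = \mu_\w$. First I would fix a weight $\ga \in \fh^*$ and take $V = \bn M(\z)_\ga$ (in the weight-module setting of \corref{maincor}). Because $\bn M$ and $\bn M(\z)$ coincide as $\fg$-modules via the identity map, the $\fh$-weight space $V$ is a \emph{fixed} finite-dimensional vector space, independent of the parameters; and because $\fh \subseteq \fg^{\mu_\w}$, the algebra $\Bgw$ preserves $V$. By \propref{generator} the $\Bgw$-action on $V$ is generated by the coefficients of $\smuw_i(z)$ pushed through $\evz$; acting on the fixed space $V$ these are matrices depending polynomially on $\w$ and rationally (regularly on $\Xl$) on $\z$. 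We thus obtain a genuine algebraic family of commuting operators over the irreducible variety $\C^d \times \Xl$.

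Now I would invoke constructibility. A commuting tuple is simultaneously diagonalizable if and only if each member is diagonalizable, and the set of diagonalizable endomorphisms of $V$ is constructible, being the image of the morphism $(g,D)\mapsto gDg^{-1}$ and hence constructible by Chevalley's theorem. Pulling back along the algebraic family shows that the locus on which $\Bgw$ is diagonalizable on $V$ is a constructible subset of $\C^d \times \Xl$. By \thmref{diag-finite-dim} (applied with $\mu = \mu_\w \in \fg^*_{\sig}$ for $\w \in \R^d$) it contains the Zariski-dense set $\R^d \times (\R^\ell \cap \Xl)$, so it is Zariski-dense; a Zariski-dense constructible subset of an irreducible variety contains a Zariski-open dense subset. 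Hence $\Bgw$ is diagonalizable on $V = \bn M(\z)_\ga$ for generic $(\w,\z)$. Finally, simultaneous diagonalizability on each weight space passes to every $\Bgw$-invariant subspace, since the joint spectral projections are polynomials in the operators and therefore preserve invariant subspaces; this settles the diagonal case $\mu = \mu_\w$ for \emph{all} finite-dimensional $\Bgw$-submodules.

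The main obstacle is the passage to a general $\mu \in \fg^*$. For non-diagonal $\mu$ one no longer has $\fh \subseteq \fg^\mu$, so the $\fh$-weight decomposition is not $\Bgmu$-stable and the ambient finite-dimensional space $V$ genuinely moves with $\mu$; one must instead organize the finite-dimensional invariant subspaces into an algebraic family so that the constructibility argument still applies. The natural remedy---conjugating a generic regular semisimple $\mu$ to a diagonal $\mu_\w$ by $\mathrm{Ad}(g)$ and transporting the conclusion---is delicate here, because the intertwiner would require $g$ to act on $\bn M$, whereas the infinite-dimensional unitarizable modules $M_i$ do not integrate to the complex group. Overcoming this is where the essential work lies: one option is to work with the Gaudin algebra $\Agmuz$ on a fixed $\fg^\mu$-isotypic piece and prove that these pieces vary algebraically in $\mu$; another is to establish the stronger statement of generic \emph{simple} spectrum, which is Zariski-open once a single simple-spectrum point is exhibited---as is available in type $\mf a$ through the duality underlying \thmref{main3}.
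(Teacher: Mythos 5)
The first thing to note is that the statement you set out to prove is \conjref{diag-conj}: the paper offers no proof and explicitly leaves it open (the authors prove only the Hermitian case, \thmref{diag-finite-dim}, together with \corref{diag-uni}, and record partial evidence in \corref{evid}). So there is no proof in the paper to compare against, and your attempt must stand on its own. It does not yet.

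Your constructibility mechanism is sound as far as it goes: the diagonalizable locus in $\End(V)$ is constructible by Chevalley, its preimage under the algebraic family map is constructible (the operator coefficients are indeed polynomial in $\w$ and in $\z$ once one expands at $z=\infty$), and a Zariski-dense constructible subset of an irreducible variety contains a dense open set; combined with \thmref{diag-finite-dim} on the totally real locus this does give generic diagonalizability of $\Bgw$ on a fixed weight space $\bn M(\z)_\ga$, hence on any finite-dimensional $\Bgw$-submodule contained in a finite sum of weight spaces. But this establishes the conjecture only along the $d$-dimensional diagonal family $\{\mu_\w\}$, which is a proper closed subvariety of $\fg^*$; density of the good locus inside that subvariety says nothing about generic $\mu\in\fg^*$. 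You acknowledge this gap yourself and correctly diagnose why the standard remedy fails: conjugating $\mu$ to diagonal form would require transporting the modules by $\mathrm{Ad}(g)$, and the infinite-dimensional unitarizable $M_i$ do not integrate to the complex group. For general $\mu$ one also loses $\fh\subseteq\fg^\mu$, so there is no fixed finite-dimensional invariant subspace over which to run an algebraic family---indeed it is unclear which finite-dimensional $\Bgmu$-submodules of $\bn M(\z)$ even exist. A further, smaller point: since $\bn M$ has infinitely many weights $\ga$, the dense open sets $U_\ga$ produced by your argument must be intersected, and a countable intersection of dense Zariski-open sets need not contain a dense open set; so even the diagonal case requires ``generic'' to be read weight-by-weight or in the ``complement of countably many proper subvarieties'' sense. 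In short, you have a correct and genuinely useful partial result (the case $\mu=\mu_\w$, with the genericity caveat just noted), but not a proof of \conjref{diag-conj}, and the essential difficulty---passing from the diagonal family to all of $\fg^*$---remains exactly where you located it.
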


Let $\fg=\nm \oplus \fh \oplus \np$ be the triangular decomposition in \eqref{tri-dec}.
For any $\w:=(w_1,\ldots, w_d) \in \C^d$, we define $\mu_\w \in \fg^*$ by
\begin{empheq}[left={\empheqlbrace}]{align}
     &\mu_\w(e^{\xx}_{ii})=-w_i, \quad \mbox{for $1 \le i \le d$;} \label{muw1} \\
    & \mbox{$\mu_\w$ vanishes on $\nm \oplus \np$.}  \label{muw2}
\end{empheq}
That is, $\mu_\w$ corresponds to a diagonal matrix.
The choice of the minus sign will be clear in \secref{BA-gls}.
Set
$$
\Bgw=\Bgmuw.
$$
It is evident that $\fh \subseteq \fg^{\mu_\w}$.
Thus for any weight $\ga \in \fh^*$ of $\bn M$, the weight space $\bn M (\z)_\ga$ is a $\Bgmu$-submodule of $\bn M (\z)$.

\begin{cor} \label{diag-uni}
Let $L_1, \ldots, L_\ell$ be unitarizable highest weight $\fg$-modules (with respect to $\sig$), and let $\ga \in \fh^*$ be any weight of $\un L$.
Then $\Bgw$ is diagonalizable on $\un L(\z)_\ga$ for any $\w \in \R^d$ and $\z \in \R^\ell$.
\end{cor}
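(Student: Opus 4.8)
The plan is to obtain this as an immediate consequence of \thmref{diag-finite-dim} applied with $M_i=L_i$ and $\mu=\mu_\w$. Since each $L_i$ is unitarizable with respect to $\sig$, the tensor product $\un L(\z)=\bn M(\z)$ carries the tensor form of the individual contravariant Hermitian forms, so the module hypotheses of \thmref{diag-finite-dim} are automatically met. It therefore suffices to verify two things: that $\mu_\w\in\fg^*_{\sig}$ whenever $\w\in\R^d$, and that $\un L(\z)_\ga$ is a \emph{finite-dimensional} $\Bgw$-submodule of $\un L(\z)$.

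For the first point I would use the explicit formula $\sig(e^{\xx}_{ij})=(-1)^{[i]+[j]}e^{\xx}_{ji}$ recorded in \secref{star}, together with $\sig(e^{\xx}_{ii})=e^{\xx}_{ii}$. Since $\mu_\w$ vanishes off the diagonal and takes the values $\mu_\w(e^{\xx}_{ii})=-w_i$ by \eqref{muw1} and \eqref{muw2}, a one-line check shows $\mu_\w(\sig(e^{\xx}_{ij}))=\overline{\mu_\w(e^{\xx}_{ij})}$ for all $i,j$ precisely when every $w_i$ is real; that is, $\mu_\w\circ\sig=\ov{\mu_\w}$, so $\mu_\w\in\fg^*_{\sig}$ for $\w\in\R^d$. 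For the submodule property, $\mu_\w$ corresponds to a diagonal matrix, whence $\fh\subseteq\fg^{\mu_\w}$; consequently $\Bgw$ commutes with $\fh$ and preserves each weight space, exactly as noted after \eqref{muw2}.

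The step requiring genuine care, and the one I expect to be the main obstacle, is the finite-dimensionality of $\un L(\z)_\ga$, since the $L_i$ are themselves infinite-dimensional. Here I would invoke the general fact that a highest weight module has finite-dimensional weight spaces (being a quotient of a Verma module) and that all its weights lie below its highest weight $\la_i$ in the usual partial order. Thus any weight $\ga_i$ of $L_i$ satisfies $\la_i-\ga_i\in Q^+$, the non-negative integral span of the positive roots. For a fixed total weight $\ga=\ga_1+\cdots+\ga_\ell$, the sum $\sum_i(\la_i-\ga_i)=\sum_i\la_i-\ga$ is a fixed element of $Q^+$, and a fixed element of $Q^+$ admits only finitely many decompositions into an ordered $\ell$-tuple of elements of $Q^+$. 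Hence only finitely many tuples $(\ga_1,\ldots,\ga_\ell)$ contribute, and each space $(L_1)_{\ga_1}\otimes\cdots\otimes(L_\ell)_{\ga_\ell}$ is finite-dimensional; summing over these finitely many tuples shows $\un L(\z)_\ga$ is finite-dimensional.

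With both verifications in place the corollary follows at once: $\un L(\z)_\ga$ is a finite-dimensional $\Bgw$-submodule of $\bn M(\z)$ and $\mu_\w\in\fg^*_{\sig}$, so \thmref{diag-finite-dim} guarantees that $\Bgw$ is diagonalizable on it. Everything beyond the finiteness argument is routine bookkeeping with the $*$-structure and the already-established commutation of $\Bgw$ with $\fh$.
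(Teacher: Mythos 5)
Your proposal is correct and follows exactly the paper's route: the paper's own proof is the one-line observation that $\un L(\z)_\ga$ is finite-dimensional and $\mu_\w\in\fg^*_{\sig}$ for $\w\in\R^d$, so Theorem~\ref{diag-finite-dim} applies. You simply spell out the two verifications (the reality check on $\mu_\w\circ\sig$ and the finiteness of the weight space via weights of highest weight modules lying below the highest weight), both of which the paper asserts without proof, and your arguments for them are sound.
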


\begin{proof}
Since $\un L(\z)_\ga$ is finite-dimensional and $\mu_\w \in  \fg^*_{\sig}$ for $\w \in \R^d$,
the corollary follows from \thmref{diag-finite-dim}.
\end{proof}

\sloppy
\thmref{g-cyclic} and \thmref{g-diag} are statements for finite-dimensional irreducible $\fg$-modules.
Their analogs for unitarizable highest weight $\fg$-modules (with respect to $\sig$) are conjectured to be true.

\begin{conj} \label{cyclic-diag-conj}
Let $\un L$ and $\ga$ be as in \corref{diag-uni}.
Then the weight space $\un L(\z)_\ga$ is a cyclic $\Bgw$-module for any $\w \in \Xd$ and $\z \in \Xl$.
Moreover, $\Bgw$ is diagonalizable with a simple spectrum on $\un L(\z)_\ga$ for generic $\w$ and generic $\z$.
\end{conj}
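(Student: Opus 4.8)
The plan is to prove the conjecture one Cartan type at a time, in each case transporting the finite-dimensional results of \thmref{g-cyclic} and \thmref{g-diag} across an oscillator (Howe-type) duality. For $\fg=\glpm$ (type $\mf a$) this is already available: specializing the $(\gld,\gls)$ duality of \thmref{main2} to $q=n=0$ is precisely \thmref{main3}, which identifies the action of $\Bgw$ on $\un L(\z)_\ga$ with that of the $\gld$-Bethe algebra $\Bdw$ on a multiplicity space inside a tensor product $\bn V(\w)$ of \emph{finite-dimensional} irreducible $\gld$-modules realized in the Fock space $\F$. Cyclicity for all $(\w,\z)\in\Xd\times\Xl$ and simple spectrum for generic parameters then descend verbatim from \thmref{g-cyclic} and \thmref{g-diag}. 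Thus the type-$\mf a$ case is already a theorem, and the genuine content of the conjecture lies in types $\mf c$ and $\mf d$.

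For $\fg=\spd$ or $\sod$ the first, and I expect the hardest, task is to produce a Bethe duality replacing \thmref{main2}, pairing $\fg$ with a reductive dual partner---a classical Lie algebra, orthogonal for $\spd$ and symplectic for $\sod$---acting on the appropriate bosonic, resp.\ fermionic, Fock space, under which the depth-$1$ unitarizable highest weight $\fg$-modules correspond to finite-dimensional modules of the partner. Concretely, I would realize $\fg$ by a homomorphism analogous to $\phi,\varphi$ into the associated Weyl/Clifford superalgebra $\cD$, form the generating series of the type-$\mf c,\mf d$ Segal--Sugawara vectors $S_1,\dots,S_d$ of \eqref{cSS}, and show---mirroring \propref{omega-Ber}---that its image coincides with that of the dual Feigin--Frenkel center, so that the two Bethe algebras have equal image in $\cD\zpz$. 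The obstruction is computational rather than conceptual: the orthogonal and symplectic generating series are governed by Pfaffian-type invariants instead of the Berezinian, and the requisite identity among pseudo-differential operators is considerably heavier than the determinantal computation of the appendix; this is where I expect most of the work to lie.

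Granting such a duality, the two assertions follow as in type $\mf a$. For diagonalizability together with simple spectrum I would argue by semicontinuity: \corref{maincor} already diagonalizes $\Bgw$ on $\un L(\z)_\ga$ at every real pair $(\w,\z)\in\R^d\times\R^\ell$, so once a \emph{single} pair is exhibited at which the eigenvalues of the generating coefficients of $\smu_i(z)$ are pairwise distinct, the simple-spectrum (hence diagonalizable) locus, being Zariski-open, is automatically dense, and the required seed point is supplied by the dual finite-dimensional model. Cyclicity is equivalent to simple spectrum wherever $\Bgw$ acts diagonalizably, which yields cyclicity on a dense open set; upgrading this to \emph{all} distinct $\w\in\Xd$ and $\z\in\Xl$, as in \thmref{g-cyclic}, is the subtle remaining point, for the Feigin--Frenkel--Rybnikov argument there rests on the geometry of opers and the Wronski map, which one would need to import through the conjectured duality.
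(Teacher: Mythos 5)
This statement is \conjref{cyclic-diag-conj}: the paper does not prove it, and neither does your proposal. The paper's only contribution toward it is \corref{evid}, obtained from \thmref{Bethe-unitary} by setting $q=n=0$, and that covers a strictly smaller situation than the conjecture: it requires $\fg=\glpm$, requires the number of tensor factors to equal $d$ (the rank of the dual $\gld$), and requires each $L_a$ to be one of the specific modules $W_{\ga_a}$ attached to a generalized partition of \emph{depth $1$}, i.e.\ realized inside a single copy $\F^{(a)}$ of the oscillator algebra. Your opening claim that ``the type-$\mf a$ case is already a theorem'' is therefore an overstatement. The conjecture allows an arbitrary number $\ell$ of factors and arbitrary unitarizable highest weight $\fg$-modules; for higher-depth highest weights the relevant Fock space involves several bosonic/fermionic families per $\gld$-index, the dual pair changes, and the weight spaces $\un L(\z)_\ga$ are no longer single weight spaces of a tensor product of irreducible finite-dimensional $\gld$-modules, so \thmref{g-cyclic} and \thmref{g-diag} do not ``descend verbatim.'' The paper itself is careful to call \corref{evid} only \emph{evidence} for the conjecture.

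The remaining two-thirds of your plan is a research program rather than a proof, and you say so yourself: the Bethe dualities for $(\spd,\,\cdot\,)$ and $(\sod,\,\cdot\,)$ pairing the type-$\mf c,\mf d$ Segal--Sugawara vectors of \eqref{cSS} with a dual Feigin--Frenkel center do not exist in the paper or in the cited literature, and the final passage from ``simple spectrum on a dense open set'' to ``cyclic for \emph{all} $\w\in\Xd$, $\z\in\Xl$'' is exactly the part of \thmref{g-cyclic} that rests on the oper/Wronskian geometry of \cite{FFRy}, which you acknowledge you cannot import. What the paper does offer, and your sketch does not use, is the unconditional diagonalizability statement of \thmref{diag-finite-dim}/\corref{maincor} for all real $(\w,\z)$, proved by a $*$-structure/Hermitian-operator argument with no duality at all; that, together with \corref{evid}, is the totality of the paper's support for the conjecture. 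In short: no proof exists here to compare against, and your proposal does not close the gap.
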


Positive evidence for \conjref{cyclic-diag-conj} (and hence for \conjref{diag-conj}), in the case where $\fg$ is the general linear Lie algebra, will be given in \corref{evid}.

\subsection{Frobenius algebras}

A \emph{Frobenius algebra} is a finite-dimensional commutative associative unital algebra $\cA$ which admits a nondegenerate symmetric bilinear form $(\cdot, \cdot)$ such that
$$
\hspace{1cm}  (a_1 a_2, a_3)=(a_1, a_2 a_3) \quad \mbox{for all $a_1, a_2, a_3 \in \cA$.}
$$

Let $V_1, \ldots, V_\ell$ be finite-dimensional irreducible $\fg$-modules, and let $\z \in \Xl$.
We will give a brief account of the fact that $(\Bgw)_{\bn V(\z)}$ is a Frobenius algebra for $\w \in \Xd$ (cf. \cite{MTV06, Lu}).

For $\xx=\mf{a,c,d}$, we define an anti-involution $\varpi: \fg \longrightarrow \fg$ by
\begin{equation} \label{Cartan-anti}
\varpi(e^{\mf x}_{ij})=e^{\mf x}_{ji} \qquad \mbox{for all $i, j$.}
\end{equation}
The map $\varpi$ is called the \emph{Cartan anti-involution} on $\fg$.

For $i=1, \ldots, \ell$, $V_i$ is an irreducible highest weight $\fg$-module with a dominant integral highest weight. Let $v_i$ be a highest weight vector of $V_i$.
There is a unique nondegenerate symmetric bilinear form $(\cdot, \cdot)_i$ on $V_i$ defined by
$$
(v_i, v_i)_i=1 \quad \text{and}\quad (A v, v^\prime)_i=(v, \varpi(A) v^\prime)_i
$$
for all $v, v^\prime \in V_i$ and $A \in \fg$ (\cite{Sh}).
The bilinear form $(\cdot, \cdot)_i$ is called the \emph{Shapovalov form} on $V_i$.
It is easy to see that if $v$ and $v^\prime$ are weight vectors of $V_i$ of distinct weights, then $(v, v^\prime)_i=0$.

Let $(\cdot, \cdot)$ denote the tensor form of the bilinear forms $(\cdot, \cdot)_1, \ldots, (\cdot, \cdot)_\ell$, which is called the \emph{tensor Shapovalov form}.
It is a nondegenerate symmetric bilinear form on $\bn V$.
Let $\w \in \C^d$.
By \cite[Theorem 9.1]{MTV06} and \cite[Lemma 2.6]{Lu}, the Bethe algebra $\Bgw$ is symmetric with respect to $(\cdot, \cdot)$, i.e.,
$$
\hspace{1.5cm}  (\bb v, v^\prime)=(v, \bb v^\prime) \quad \mbox{for $\bb \in \Bgw \,$ and $\, v, v^\prime \in \bn V (\z)$}.
$$

 \begin{thm}  \label{Frob}
For any $\w \in \Xd$ and $\z \in \Xl$, the Bethe algebra $(\Bgw)_{\bn V(\z)}$ is a Frobenius algebra. Moreover, $(\Bgw)_{{\bn V(\z)}_{\ga}}$ is a Frobenius algebra for any weight $\ga$ of $\bn V$.
\end{thm}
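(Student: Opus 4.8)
The plan is to exhibit $\cA:=(\Bgw)_{\bn V(\z)}$ as a Frobenius algebra by transporting the tensor Shapovalov form to $\cA$ through a cyclic vector. First note that $\cA$, being the image of the commutative algebra $\Bgw$ in $\End(\bn V(\z))$ (which is finite-dimensional since each $V_i$ is), is itself a finite-dimensional commutative associative unital algebra. Since $\mu_\w$ is regular for $\w\in\Xd$ and $\z\in\Xl$, \thmref{g-cyclic} shows that $\bn V(\z)$ is a cyclic $\cA$-module; I would fix a cyclic vector $v_0$, so that $\cA v_0=\bn V(\z)$.

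Next I would construct the pairing on $\cA$. Let $(\cdot,\cdot)$ denote the tensor Shapovalov form on $\bn V\cong\bn V(\z)$, which is nondegenerate and symmetric, and recall (as noted above, following \cite{MTV06, Lu}) that $\Bgw$ is symmetric with respect to it. Define $\lambda\in\cA^*$ by $\lambda(a)=(a v_0,v_0)$ and set $(a,b)_\cA:=\lambda(ab)$ for $a,b\in\cA$. Using the commutativity of $\cA$ together with the symmetry of the $\cA$-action, one checks $(a,b)_\cA=(ab\,v_0,v_0)=(a v_0,b v_0)$, so $(\cdot,\cdot)_\cA$ is symmetric; the Frobenius identity $(a_1a_2,a_3)_\cA=\lambda(a_1a_2a_3)=(a_1,a_2a_3)_\cA$ is immediate. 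For nondegeneracy, suppose $(a,b)_\cA=0$ for all $b\in\cA$; then $(a v_0,b v_0)=0$ for all $b$, and since the vectors $b v_0$ span $\cA v_0=\bn V(\z)$, nondegeneracy of $(\cdot,\cdot)$ forces $a v_0=0$. Finally, for any $m=b v_0\in\bn V(\z)$ we get $a m=ab\,v_0=ba\,v_0=0$ by commutativity, so $a$ acts as $0$ on $\bn V(\z)$, i.e. $a=0$ in $\cA$. Hence $(\cdot,\cdot)_\cA$ is a nondegenerate symmetric associative bilinear form and $\cA$ is a Frobenius algebra.

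For the weight-space statement I would descend the above to $\bn V(\z)_\ga$. Because $\mu_\w$ is diagonal we have $\fh\subseteq\fg^{\mu_\w}$, so $\Bgw$ preserves every weight space $\bn V(\z)_\ga$; consequently the projection $e_\ga:\bn V(\z)\to\bn V(\z)_\ga$ along the remaining weight spaces is $\cA$-linear. Applying $e_\ga$ to $\cA v_0=\bn V(\z)$ yields $\bn V(\z)_\ga=\cA\,e_\ga(v_0)$, so each weight space is again a cyclic $\cA$-module. Moreover, weight vectors of distinct weights are orthogonal under the tensor Shapovalov form, so $(\cdot,\cdot)$ restricts to a nondegenerate symmetric form on $\bn V(\z)_\ga$ with respect to which $\Bgw$ remains symmetric. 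Repeating the construction of the previous paragraph, with $\bn V(\z)_\ga$ and the cyclic vector $e_\ga(v_0)$ in place of $\bn V(\z)$ and $v_0$, shows that $(\Bgw)_{\bn V(\z)_\ga}$ is a Frobenius algebra.

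The formal heart of the argument, namely building a symmetric associative form out of $\lambda(\cdot\,\cdot)$, is routine once the two genuine inputs are in place: the cyclicity from \thmref{g-cyclic} and the symmetry of $\Bgw$ under the tensor Shapovalov form. The step I expect to be most delicate is the nondegeneracy argument, where I must pass from $a v_0=0$ to $a=0$ in $\cA$; here commutativity of $\Bgw$ is essential, since it is what spreads the vanishing on the cyclic vector to all of $\bn V(\z)$. For the weight-space version the analogous subtle point is the descent of cyclicity from $\bn V(\z)$ to each $\bn V(\z)_\ga$, which hinges on the $\cA$-linearity of the weight projections and hence on $\fh\subseteq\fg^{\mu_\w}$. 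One should also confirm that $\mu_\w$ is regular throughout the chosen range of $\w$, so that \thmref{g-cyclic} indeed applies.
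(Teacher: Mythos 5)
Your proposal is correct and follows essentially the same route as the paper: the paper invokes Theorem~\ref{g-cyclic} for cyclicity and cites \cite[Lemma 2.7]{Lu} for exactly the construction you spell out (transporting the tensor Shapovalov form through a cyclic vector to get a nondegenerate symmetric associative form on the image algebra), and handles the weight spaces by restricting the form using orthogonality of distinct weights. Your explicit verification of symmetry, associativity, and nondegeneracy, and your remark about checking regularity of $\mu_\w$, are consistent with what the cited lemma and the surrounding discussion in the paper provide.
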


\begin{proof}
The first statement follows from the above discussion on the tensor Shapovalov form $(\cdot, \cdot)$ on $\bn V$ in view of \thmref{g-cyclic} and \cite[Lemma 2.7]{Lu}. As $(\cdot, \cdot)$ restricts to a nondegenerate symmetric bilinear form on ${\bn V (\z)}_{\! \ga}$ for any weight $\ga$ of $\bn V (\z)$, the second statement follows.
\end{proof}

The following is a consequence of \cite[Lemma 1.3]{Lu}.

\begin{thm} \label{max}

\sloppy
For any $\w \in \Xd$ and $\z \in \Xl$, the following properties hold:

\begin{enumerate} [\normalfont(i)]

\item The Bethe algebra $(\Bgw)_{\bn V(\z)}$ is a maximal commutative subalgebra of $\End(\bn V(\z))$ of dimension $\dim \! \left({\bn V(\z)} \right)$.

\item Every eigenspace of $(\Bgw)_{\bn V(\z)}$ is one-dimensional, and the set of eigenspaces of $(\Bgw)_{\bn V(\z)}$ is in bijective correspondence with the set of maximal ideals of $(\Bgw)_{\bn V(\z)}$.

\item Every generalized eigenspace of $(\Bgw)_{\bn V(\z)}$ is a cyclic $\Bgw$-module.

\end{enumerate}
\end{thm}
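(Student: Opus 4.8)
The plan is to obtain all three assertions as an application of the abstract result \cite[Lemma 1.3]{Lu}, which concerns a finite-dimensional commutative algebra $\cB$ acting faithfully on a vector space $M$ and asserts exactly the properties (i)--(iii) under the two hypotheses that $M$ is a cyclic $\cB$-module and that $\cB$ is a Frobenius algebra. Thus the genuine work is to verify these hypotheses for $\cB:=(\Bgw)_{\bn V(\z)}$ and $M:=\bn V(\z)$, after which the theorem follows formally. Faithfulness is automatic, since by construction $(\Bgw)_{\bn V(\z)}$ is the image of $\Bgw$ in $\End(\bn V(\z))$.

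To secure cyclicity I would first check that $\mu_\w$ is regular for $\w \in \Xd$. Indeed, $\mu_\w$ corresponds to the diagonal matrix with the pairwise distinct entries $-w_1, \ldots, -w_d$, so its centralizer $\fg^{\mu_\w}$ is the standard Cartan subalgebra $\fh$, whose dimension $d$ equals the rank of $\fg$; hence $\mu_\w$ is regular. \thmref{g-cyclic} then applies and shows that $\bn V(\z)$ is a cyclic $\Bgw$-module for $\w \in \Xd$ and $\z \in \Xl$. The Frobenius property of $(\Bgw)_{\bn V(\z)}$ is precisely the first statement of \thmref{Frob}. Both hypotheses of \cite[Lemma 1.3]{Lu} are therefore met.

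For completeness I would indicate why the conclusions follow. Cyclicity gives a surjection $\cB \to M$, $b \mapsto b v_0$; for a faithful cyclic module over a Frobenius algebra this map is an isomorphism, so $M$ is isomorphic to the regular representation of $\cB$ and $\dim \cB = \dim M$. The commutant of $\cB$ acting on its own regular representation is $\cB$ itself, since endomorphisms of the regular module of a commutative algebra are multiplication operators; this forces $\cB$ to be maximal commutative in $\End(M)$ and yields (i). Decomposing $\cB$ into its local Artinian factors $\cB=\prod_\chi \cB_\chi$, indexed by its characters $\chi$ (equivalently, by its maximal ideals), the generalized eigenspaces of $\cB$ on $M \cong \cB$ are exactly the factors $\cB_\chi$, each visibly a cyclic $\cB$-module, giving (iii); and the Frobenius property forces each $\cB_\chi$ to have a one-dimensional socle, which coincides with the honest eigenspace attached to $\chi$, giving the one-dimensionality and the bijection in (ii).

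Given the strength of \thmref{g-cyclic} and \thmref{Frob}, the argument is essentially bookkeeping, and the only point demanding attention is the regularity of $\mu_\w$ that licenses the use of \thmref{g-cyclic}; once cyclicity and the Frobenius property are in hand, parts (i)--(iii) are formal consequences of the structure theory of commutative Frobenius algebras packaged in \cite[Lemma 1.3]{Lu}.
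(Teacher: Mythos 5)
Your proposal matches the paper's own treatment, which records \thmref{max} simply as a consequence of \cite[Lemma 1.3]{Lu}, with the hypotheses of that lemma (cyclicity of $\bn V(\z)$ and the Frobenius property of $(\Bgw)_{\bn V(\z)}$) supplied by \thmref{g-cyclic} and \thmref{Frob} exactly as you verify them; your unpacking of the lemma's internal mechanism is a correct elaboration rather than a departure. One caveat, which you share with the paper: your regularity argument for $\mu_\w$ is complete only for $\fg=\gld$, since for $\fg=\spd$ or $\sod$ the condition $\w\in\Xd$ does not exclude $w_i=-w_j$ or $w_i=0$, in which case the centralizer of $\mu_\w$ is strictly larger than $\fh$ and \thmref{g-cyclic} does not directly apply.
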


\begin{rem}  \label{max-r}
Each statement of \thmref{max} holds if $\bn V(\z)$ is replaced with ${\bn V(\z)}_{\ga}$ for any weight $\mu$ of $\bn V$.
\end{rem}

\section{A Duality of Bethe algebras and an application} \label{Duality}

In this section, we establish a duality of Bethe algebras for the general linear Lie (super)algebras $\gld$ and $\gls$.
We apply it to study the action of the Bethe algebra for $\gls$ on a tensor product of infinite-dimensional unitarizable highest weight $\gls$-modules and obtain evidence for \conjref{cyclic-diag-conj} if $q$ and $n$ are set to 0.

\subsection{Column determinants and Berezinians} \label{Ber}

Let us review the notions of column determinants and Berezinians.

Let $\cA$ be an associative unital superalgebra.
The parity of a homogeneous element $a \in \cA$ is denoted by $|a|$, which lies in $\Z_2$.
Fix $k \in \N$. For any $k \times k$ matrix $A=\big[a_{i,j}\big]_{i,j=1,\ldots,k}$ over $\cA$, the \emph{column determinant} of $A$ is defined to be
$$
\cdet (A)=\sum_{\si\in\mf{S}_k} (-1)^{l(\si)} \,  a_{\si(1),1}\ldots a_{\si(k),k}.
$$
Here $\fS_k$ denotes the symmetric group on $\{1, \ldots, k\}$ and $l(\si)$ denotes the length of $\si$.

Assume that $A$ has a two-sided inverse $A^{-1}=\big[\wt{a}_{i,j}\big]$.
For all $i, j=1, \ldots, k$, the \emph{$(i,j)$th quasideterminant} of $A$ is defined to be $|A|_{ij}:=\wt{a}_{j,i}^{-1}$ provided that $\wt{a}_{j,i}$ has an inverse in $\cA$.
Following the notation of \cite{GGRW}, it is convenient to write
$$
|A|_{ij}=
\begin{vmatrix}
 a_{1,1} 	& \ldots & a_{1,j} &\ldots & a_{1,k}\\
 \ldots	& \ldots & \ldots  &\ldots & \ldots	\\
 a_{i,1}& \ldots & \fbox{$a_{i,j}$} &\ldots &a_{i,k}\\
 \ldots	& \ldots & \ldots  &\ldots & \ldots	\\
 a_{k,1}&\ldots &a_{k,j} &\ldots &a_{k,k}
\end{vmatrix}.
$$
For $i=1,\ldots,k$, we define
$$
d_i(A)=\begin{vmatrix}
	 a_{1,1} 	& \ldots & a_{1,i}\\
	 \ldots	& \ldots & \ldots \\
	 a_{i,1}& \ldots & \fbox{$a_{i,i}$}
\end{vmatrix},\
$$
which are called the \emph{principal quasiminors} of $A$.

Let $A=\big[a_{i,j}\big]_{i,j=1,\ldots,k}$ be a $k \times k$ matrix over $\cA$.
For any nonempty subset $P=\{i_1<\ldots<i_p\}$ of $\{1,\ldots,k\}$, the matrix $A_{P}:=\big[a_{i,j}\big]_{i,j \in P}$ is called a \emph{standard submatrix} of $A$.
We say that $A$ is \emph{sufficiently invertible} if every principal quasiminor of $A$ is well defined, and that $A$ is \emph{amply invertible} if each of its standard submatrices is sufficiently invertible.

A sequence $(s_1,\ldots,s_{m+n})$ of 0's and 1's is called a \emph{$0^m1^n$-sequence} if exactly $m$ of the $s_i$'s are 0 and the others are 1.
Every $0^m1^n$-sequence can be written in the form
$(0^{m_1}, 1^{n_1}, \ldots, 0^{m_r}, 1^{n_r}),$
where the sequence begins with $m_1$ copies of $0$'s, followed by $n_1$ copies of $1$'s, and so on.
The set of all $0^m1^n$-sequences is denoted by $\cS_{\mn}$.

Fix $\s:=(s_1,\ldots,s_{m+n}) \in \cS_{\mn}$.
For any $\si \in \fS_{m+n}$ and any $(m+n)\times (m+n)$ matrix $A=[a_{i,j}]_{i=1,\ldots,m+n}$ over $\cA$, we define $\s^\si=\left(s_{\si^{-1}(1)},s_{\si^{-1}(2)},\ldots,s_{\si^{-1}(m+n)} \right)$ and $A^\si=\big[a_{\si^{-1}(i),\si^{-1}(j)} \big]_{i=1,\ldots,m+n}$.
We say that $A$ is of type $\s$ if $a_{i,j}$ is homogeneous of parity $|a_{i,j}|=\bs_i+\bs_j$ for any $i,j = 1,\ldots,m+n$.

For any sufficiently invertible matrix $A$ of type $\s$  over $\cA$, the \emph{Berezinian of type $\s$} of $A$ is defined to be
$$
\Bers (A)=d_1(A)^{\hs_1}\ldots d_{m+n}(A)^{\hs_{m+n}},
$$
where $\hs_i:=(-1)^{s_i}$ for $i=1, \ldots, m+n$ (see \cite[(3.3)]{HM}). We refer the reader to \cite{Ber, Na, MR} for earlier definitions of Berezinians.
The following proposition is useful.

\begin{prop} [{cf. \cite[Proposition 3.5]{HM}}] \label{decomp}
Let $A$ be an amply invertible matrix of type $\s$ over $\cA$.
Fix $k\in \{1,\ldots,m+n-1\}$. We write
\begin{equation} \label{block}
A=\begin{bmatrix} W & X\\ Y&Z \end{bmatrix},
\end{equation}
where $W,X,Y,Z$ are respectively $k\times k$, $k\times(m+n-k)$, $(m+n-k)\times k$, and $(m+n-k)\times (m+n-k)$ matrices.
Then $W$ and $Z-YW^{-1}X$ are sufficiently invertible matrices of types $\s^\prime:=(s_1,\ldots,s_k)$ and $\s^\dpr:=(s_{k+1},\ldots,s_{m+n})$, respectively. Moreover,
$$
\Bers (A)=\Ber^{\s^\prime} (W) \cdot \Ber^{\s^\dpr} \big(Z-YW^{-1}X \big).
$$
\end{prop}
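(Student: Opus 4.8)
The plan is to prove the block factorization formula for the Berezinian by induction on the block size $k$, reducing to the case $k=1$ and then invoking the defining formula $\Bers(A)=d_1(A)^{\hs_1}\ldots d_{m+n}(A)^{\hs_{m+n}}$ together with standard quasideterminant identities. The statement being referenced is \propref{decomp}, the block decomposition of the Berezinian. Since this is cited as ``cf. \cite[Proposition 3.5]{HM}'', the expected approach mirrors the proof in Huang--Mukhin, adapted to an arbitrary type $\s \in \cS_{\mn}$ rather than the standard sequence. The key tool will be the hereditary property of quasideterminants under taking Schur complements.

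\medskip

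\noindent First I would establish the well-definedness claims. The hypothesis that $A$ is amply invertible guarantees that every principal quasiminor of every standard submatrix exists; in particular $W$ is amply invertible, so $W^{-1}$ exists and $W$ is sufficiently invertible of type $\s^\prime$. For the Schur complement $Z-YW^{-1}X$, I would use the standard quasideterminant heredity principle: for indices $i,j$ lying in the lower block, the quasideterminant $|A|_{ij}$ computed in the full matrix $A$ equals the quasideterminant $|Z-YW^{-1}X|_{ij}$ computed in the Schur complement (see \cite{GGRW}). This identity, applied to the leading principal submatrices, shows simultaneously that the principal quasiminors $d_{k+i}(A)$ of $A$ coincide with the principal quasiminors $d_i(Z-YW^{-1}X)$ of the Schur complement, and hence that $Z-YW^{-1}X$ is sufficiently invertible of type $\s^\dpr$ (parities being additive, $Z-YW^{-1}X$ inherits type $\s^\dpr$ from the relevant rows and columns of $A$).

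\medskip

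\noindent The main identity then follows by assembling the defining product. By heredity applied to the upper block, the first $k$ principal quasiminors of $A$ are exactly the principal quasiminors of $W$, so
$$
d_1(A)^{\hs_1}\ldots d_k(A)^{\hs_k}=d_1(W)^{\hs_1}\ldots d_k(W)^{\hs_k}=\Ber^{\s^\prime}(W).
$$
By the Schur-complement heredity just described, the remaining factors give
$$
d_{k+1}(A)^{\hs_{k+1}}\ldots d_{m+n}(A)^{\hs_{m+n}}=\Ber^{\s^\dpr}\big(Z-YW^{-1}X\big),
$$
where I use that $\hs_{k+i}=(-1)^{s_{k+i}}$ matches the $i$th exponent for $\s^\dpr$. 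Multiplying the two displays yields the claimed formula $\Bers(A)=\Ber^{\s^\prime}(W)\cdot\Ber^{\s^\dpr}(Z-YW^{-1}X)$.

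\medskip

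\noindent \textbf{The main obstacle} will be establishing the heredity identity $d_{k+i}(A)=d_i(Z-YW^{-1}X)$ rigorously in the noncommutative superalgebra setting. Over a commutative ring this is the classical fact that leading principal minors factor through the Schur complement, but here the entries lie in a noncommutative superalgebra $\cA$ and the objects are quasideterminants, so one must be careful that inverses are taken in the right order and that the ample invertibility hypothesis supplies every inverse needed along the way. I would handle this by an induction that peels off one row and column at a time, at each stage using the block-inverse formula for quasideterminants from \cite{GGRW} and checking that the parity bookkeeping (the exponents $\hs_i$) is unaffected since it depends only on the fixed sequence $\s$, not on the algebra structure. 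Once heredity is secured, the rest is a bookkeeping assembly of the product, so I do not anticipate further difficulty.
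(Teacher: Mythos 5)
Your argument is correct and is essentially the proof of \cite[Proposition 3.5]{HM}, which the paper simply cites rather than reproving: the first $k$ principal quasiminors of $A$ coincide with those of $W$ (this is immediate from the definition, since $d_i$ depends only on the leading $i\times i$ corner, so no heredity is needed there), and the identity $d_{k+i}(A)=d_i(Z-YW^{-1}X)$ is the quasideterminant heredity principle of \cite{GGRW}, with ample invertibility supplying $W^{-1}$ and every intermediate inverse. The parity bookkeeping for the type of the Schur complement and the exponents $\hs_{k+i}$ is as you describe, so the product formula follows exactly as you assemble it.
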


A matrix $A=[a_{i,j}]_{i=1,\ldots,m+n}$ over $\cA$ is called a \emph{Manin matrix} of type $\s$ if $A$ is a matrix of type $\s$ satisfying the following relations
$$
[a_{i,j}, a_{k,l}]=(-1)^{s_i s_j+s_i s_k+s_j s_k}[a_{k,j}, a_{i, l}]  \qquad  \text{for all}  \quad i,j,k,l = 1,\ldots,m+n,
$$
where $[a, b]:=ab-(-1)^{|a||b|}ba$ for any homogeneous elements $a, b \in \cA$.
We recall some basic facts about Manin matrices.

\begin{prop}[{cf. \cite[Section 3]{HM}}] \label{basics}

Let $A$ be a Manin matrix of type $\s$ over $\cA$. Then

\begin{enumerate}[\normalfont(i)]

\item If $P =\{i_1<\ldots<i_p \}$ is a nonempty subset of $\{1,\ldots,m+n\}$, then the standard submatrix $A_{P}$ is a Manin matrix of type $(s_{i_1},\ldots s_{i_p})$.

\item For any $\si \in \fS_{m+n}$, $A^\si$ is a Manin matrix of type $\s^\si$.

\end{enumerate}

\end{prop}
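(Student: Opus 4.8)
The plan is to prove the two statements of \propref{basics} in order, using the defining commutation relation of a Manin matrix of type $\s$ together with \propref{omega} as a conceptual guide for how entry permutations interact with the parity data. For part (i), let $P=\{i_1<\ldots<i_p\}$ and consider the standard submatrix $A_P=[a_{i,j}]_{i,j\in P}$. The entries of $A_P$ are simply a subset of the entries of $A$, so for indices $i,j,k,l\in P$ the required identity
$$
[a_{i,j},a_{k,l}]=(-1)^{s_is_j+s_is_k+s_js_k}[a_{k,j},a_{i,l}]
$$
is literally an instance of the relation already satisfied in $A$, with the parities read off from the same $\s$. The only point requiring care is bookkeeping: $A_P$ is declared to be of type $(s_{i_1},\ldots,s_{i_p})$, so I must check that the parity of the $(\alpha,\beta)$ entry of $A_P$ (i.e.\ the entry $a_{i_\alpha,i_\beta}$) equals $\bs_{i_\alpha}+\bs_{i_\beta}$, which is exactly the type-$\s$ condition for $A$ restricted to $P$. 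Thus part (i) is essentially a restriction argument and carries no real obstruction.

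For part (ii), I would verify that $A^\si=[a_{\si^{-1}(i),\si^{-1}(j)}]$ is a Manin matrix of type $\s^\si=(s_{\si^{-1}(1)},\ldots,s_{\si^{-1}(m+n)})$. Writing $a'_{i,j}:=a_{\si^{-1}(i),\si^{-1}(j)}$ and $s'_i:=s_{\si^{-1}(i)}$, the type condition $|a'_{i,j}|=\bs'_i+\bs'_j$ follows immediately from $|a_{\si^{-1}(i),\si^{-1}(j)}|=\bs_{\si^{-1}(i)}+\bs_{\si^{-1}(j)}$. The substance is the commutation relation: I must show
$$
[a'_{i,j},a'_{k,l}]=(-1)^{s'_is'_j+s'_is'_k+s'_js'_l+\,\ldots}[a'_{k,j},a'_{i,l}],
$$
which after substituting $\si^{-1}$ on all indices becomes the Manin relation for $A$ with indices $\si^{-1}(i),\si^{-1}(j),\si^{-1}(k),\si^{-1}(l)$, and the sign exponent rewrites to $s'_is'_j+s'_is'_k+s'_js'_k$ precisely because $s'_\bullet=s_{\si^{-1}(\bullet)}$. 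The main obstacle is keeping the sign exponent consistent under relabeling: the exponent $s_is_j+s_is_k+s_js_k$ is symmetric in the three indices $i,j,k$ appearing on the left, so relabeling them via $\si^{-1}$ permutes these three contributions among themselves and leaves the total unchanged, which is the key invariance to isolate.

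Since a general permutation factors into adjacent transpositions, it would be cleanest to first establish part (ii) for a single transposition $\si=(r\;r{+}1)$ (or any transposition), where the only affected rows/columns are two, and then conclude the general case by composing; this reduces the sign computation to a handful of cases according to which of $i,j,k,l$ coincide with the transposed indices. Alternatively one can argue directly for arbitrary $\si$ as above, since the Manin relation is stated for all quadruples $i,j,k,l$ and the substitution $m\mapsto\si^{-1}(m)$ is a bijection on index quadruples; this direct route avoids an induction and is probably shorter. I would present the direct substitution argument, flagging that the symmetry of the sign exponent in $(i,j,k)$ is what makes the identity invariant under the relabeling. In either approach the proof is a formal verification rather than a deep argument, so I expect no genuine difficulty beyond careful sign tracking.
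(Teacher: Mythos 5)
Your proof is correct, and since the paper itself gives no proof of \propref{basics} (it simply cites \cite[Section 3]{HM}), your direct verification is exactly the routine argument one would supply: part (i) is the Manin relation of $A$ read off on quadruples from $P$, and part (ii) follows because the substitution $i\mapsto\si^{-1}(i)$ is a bijection on index quadruples that carries the relation for $A^\si$ of type $\s^\si$ to the relation for $A$ of type $\s$. Two cosmetic points: the displayed sign exponent in your part (ii) should read $s'_is'_j+s'_is'_k+s'_js'_k$ (you wrote $s'_js'_l$ before correcting it in prose), and the appeal to symmetry of the exponent in $(i,j,k)$ is unnecessary --- nothing is being permuted among $i,j,k$; the exponent simply transports verbatim under the uniform relabeling.
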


The following proposition asserts that the Berezinian of a Manin matrix is invariant under any permutation.

\begin{prop} [{\cite[Proposition 3.6]{HM}}] \label{HM}

Let $A$ be an amply invertible Manin matrix of type $\s$ over $\cA$.
Then
$$
 \hspace{1cm}  \Ber^{\s^\si} (A^\si)=\Bers (A)  \qquad \mbox{for any $\si \in \fS_{m+n}$.}
$$
\end{prop}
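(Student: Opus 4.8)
The plan is to reduce the statement first to adjacent transpositions and then to a single $2\times2$ computation. I would begin by recording that $A\mapsto A^\si$ is compatible with composition in the sense that $(A^{\si'})^\tau = A^{\tau\si'}$ and $(\s^{\si'})^\tau = \s^{\tau\si'}$; this is immediate from the definitions $A^\si=[a_{\si^{-1}(i),\si^{-1}(j)}]$ and $\s^\si=(s_{\si^{-1}(1)},\ldots,s_{\si^{-1}(m+n)})$. Since the adjacent transpositions $\tau_k=(k,k+1)$ generate $\fS_{m+n}$, and since $A^{\si'}$ is again an amply invertible Manin matrix of type $\s^{\si'}$ by \propref{basics}(ii) (its ample invertibility being inherited from that of $A$), it suffices to prove the identity for one adjacent transposition and all amply invertible Manin matrices of arbitrary type. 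The general case then follows by induction on the length of $\si$: writing $\si=\tau_k\si'$ with $\si'$ shorter, one applies the adjacent case to the Manin matrix $A^{\si'}$ with the transposition $\tau_k$ and invokes the inductive hypothesis for $\si'$.

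For a fixed $\tau_k$, I would isolate the relevant $2\times2$ block using \propref{decomp} twice. Splitting $A$ after the $(k+1)$st coordinate, the Schur complement on $\{k+2,\ldots,m+n\}$ is unchanged by $\tau_k$, since a transposition of eliminated coordinates leaves a Schur complement fixed (a direct check using $P^{-1}=P$ for a transposition matrix $P$). Hence both Berezinians factor through the same tail, and we are reduced to the leading submatrix $A_{\{1,\ldots,k+1\}}$, which is Manin by \propref{basics}(i). Splitting this in turn after the $(k-1)$st coordinate, the head $A_{\{1,\ldots,k-1\}}$ is fixed by $\tau_k$ while the $2\times2$ Schur complement $T=[t_{ij}]$ on $\{k,k+1\}$ transforms as $T\mapsto T^{\tau_k}$, because a transposition of surviving coordinates commutes with forming the Schur complement. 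Using the standard heredity (Sylvester) identities for quasideterminants, which give $d_k(A)=t_{11}$, $d_{k+1}(A)=|T|_{22}$, $d_k(A^{\tau_k})=t_{22}$, and $d_{k+1}(A^{\tau_k})=|T|_{11}$, and cancelling the (invertible) common head and tail factors, the entire claim collapses to the single $2\times2$ Berezinian identity
\[
t_{11}^{\hs_k}\,|T|_{22}^{\hs_{k+1}} \;=\; t_{22}^{\hs_{k+1}}\,|T|_{11}^{\hs_k},
\]
that is, $\Ber^{(s_{k+1},s_k)}(T^{\tau_k})=\Ber^{(s_k,s_{k+1})}(T)$.

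It remains to verify this $2\times2$ identity, and this is the step I expect to be the main obstacle. Unlike \propref{basics}(i), which applies only to genuine standard submatrices, the matrix $T$ here is a \emph{Schur complement}, so its entries need not obviously satisfy the Manin relations. I would therefore first establish the lemma that the Schur complement of a Manin matrix of type $\s$ is again a Manin matrix (of the complementary type); this is the genuinely technical input, requiring one to derive the commutation relations among the $t_{ij}$ from those of the original matrix together with the relations governing $W^{-1}$. Granting it, the four parity cases $(s_k,s_{k+1})\in\{(0,0),(1,1),(0,1),(1,0)\}$ are each a direct calculation: one expands $|T|_{11}=t_{11}-t_{12}t_{22}^{-1}t_{21}$ and $|T|_{22}=t_{22}-t_{21}t_{11}^{-1}t_{12}$ and applies the Manin relations among the $t_{ij}$. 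The even case reduces to the permutation invariance of the column determinant, while the delicate cases are the mixed-parity ones, where $\hs_k\neq\hs_{k+1}$ forces one to commute the inverses $t_{11}^{-1}$ and $t_{22}^{-1}$ past the off-diagonal entries; I would handle these by rewriting both sides over a common factored form and reducing to the single quadratic relation guaranteed by the Manin condition.
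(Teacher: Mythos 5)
The paper does not prove this proposition: it is quoted verbatim from \cite[Proposition 3.6]{HM}, so there is no internal proof to compare against. Judged on its own terms, your outline reproduces the natural (and, in essence, the cited) strategy: reduce to adjacent transpositions via $(A^{\si'})^\tau=A^{\tau\si'}$ together with \propref{basics}(ii), apply \propref{decomp} twice to strip off a tail Schur complement that is unchanged by $\tau_k$ and a head $A_{\{1,\ldots,k-1\}}$ that is also unchanged, and collapse everything to a $2\times 2$ Berezinian identity for the Schur complement $T$ on $\{k,k+1\}$. The reduction steps you describe are all correct and routine to verify: the tail Schur complement is fixed because the permutation matrix satisfies $P^2=1$, the $2\times2$ Schur complement transforms as $T\mapsto T^{\tau_k}$, and the heredity identities $d_k=t_{11}$, $d_{k+1}=|T|_{22}$ follow from the Sylvester/heredity property of quasideterminants.

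That said, the proposal defers exactly the two steps that carry the content of the result. First, the lemma that a Schur complement of a Manin matrix is again a Manin matrix of the complementary type is stated but not proved; it does not follow from \propref{basics}(i), and without it you have no commutation relations among the $t_{ij}$ to feed into the final computation. Second, the mixed-parity case of the $2\times2$ identity $t_{11}^{\hs_k}\,|T|_{22}^{\hs_{k+1}}=t_{22}^{\hs_{k+1}}\,|T|_{11}^{\hs_k}$ (equivalently, $t_{22}t_{11}=|T|_{11}|T|_{22}$ when $\hs_k\ne\hs_{k+1}$) is precisely where the Berezinian differs from a determinant, and ``rewriting both sides over a common factored form'' is a plan rather than an argument --- note in particular that for mixed type the Manin relations impose \emph{fewer} constraints (e.g.\ nothing forces $t_{11}$ and $t_{12}$ to commute), so this case cannot be waved through by analogy with the even one. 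A smaller but genuine issue is your parenthetical claim that ample invertibility of $A$ is ``inherited'' by $A^{\si'}$: with the paper's definition, ample invertibility only guarantees the \emph{principal} quasiminors of standard submatrices, whereas the principal quasiminors of $(A^{\si'})_P$ correspond to diagonal quasiminors of submatrices of $A$ at non-terminal positions, so the inheritance needs a separate justification (or is simply checked for the concrete matrices to which the proposition is applied). In short, the architecture is right, but the proof of the proposition lives in the parts you have left open.
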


We may decompose $\Bers (A)$ in \propref{block} in a different way if $A$ is a Manin matrix.

\begin{prop} \label{block2}
Let $A$ be an amply invertible Manin matrix of type $\s$ over $\cA$. Write $A$ as in \eqref{block}.
Then $W-XZ^{-1} Y$ and $Z$ are sufficiently invertible matrices of types $\s^\prime:=(s_1,\ldots,s_k)$ and $\s^\dpr:=(s_{k+1},\ldots,s_{m+n})$, respectively.
Moreover,
\begin{equation} \label{dec2}
\Bers (A)=\Ber^{\s^\dpr} (Z)  \cdot \Ber^{\s^\prime} \big(W-XZ^{-1} Y \big).
\end{equation}
\end{prop}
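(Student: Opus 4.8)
The plan is to deduce \propref{block2} from \propref{decomp} by interchanging the two diagonal blocks of $A$ via a suitable permutation and then invoking the permutation-invariance of the Berezinian established in \propref{HM}. Let $\si \in \fS_{m+n}$ be the block-cyclic permutation determined by $\s^\si=(s_{k+1},\ldots,s_{m+n},s_1,\ldots,s_k)=(\s^\dpr,\s^\prime)$; concretely, $\si^{-1}$ sends $1,\ldots,m+n-k$ to $k+1,\ldots,m+n$ and $m+n-k+1,\ldots,m+n$ to $1,\ldots,k$. A direct inspection of $A^\si=\big[a_{\si^{-1}(i),\si^{-1}(j)}\big]$ then gives
$$
A^\si=\begin{bmatrix} Z & Y\\ X & W \end{bmatrix},
$$
and \propref{basics}(ii) shows that $A^\si$ is a Manin matrix of type $\s^\si$.

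Granting for the moment that $A^\si$ is amply invertible, I would apply \propref{decomp} to $A^\si$ with the block split taken at position $m+n-k$. In the notation of that proposition the top-left block is $Z$ (of size $(m+n-k)\times(m+n-k)$ and type $\s^\dpr$), so the relevant Schur complement is $W-XZ^{-1}Y$ (of type $\s^\prime$). \propref{decomp} then yields at once that $Z$ and $W-XZ^{-1}Y$ are sufficiently invertible of types $\s^\dpr$ and $\s^\prime$---which is the first assertion of \propref{block2}---together with the factorization
$$
\Ber^{\s^\si}(A^\si)=\Ber^{\s^\dpr}(Z)\cdot\Ber^{\s^\prime}\big(W-XZ^{-1}Y\big).
$$
Since $A$ is an amply invertible Manin matrix, \propref{HM} gives $\Ber^{\s^\si}(A^\si)=\Bers(A)$, and combining the last two displays produces exactly \eqref{dec2}.

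The remaining point, which I expect to be the main obstacle, is to verify that $A^\si$ is itself amply invertible, since this is a standing hypothesis of \propref{decomp}. The issue is that ample invertibility must be shown to survive the block swap $\si$: each standard submatrix of $A^\si$ is a standard submatrix of $A$ with its rows and columns reordered by $\si$, and although \propref{basics}(i) ensures each such submatrix remains a Manin matrix, one still has to check that the principal quasiminors of these reordered Manin matrices are well defined. This is exactly the stability already implicit in \propref{HM}, whose conclusion presupposes that $\Ber^{\s^\si}(A^\si)$ makes sense and hence that $A^\si$ is at least sufficiently invertible. I would therefore carry out this verification by the same quasideterminant manipulations, using the Manin relations to rewrite the reordered principal quasiminors in terms of quasiminors of $A$. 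Once this bookkeeping is complete, the argument above closes.
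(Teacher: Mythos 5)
Your proof is correct and follows essentially the same route as the paper: the same block-swap permutation $\si$ with $A^\si=\begin{bmatrix} Z & Y\\ X & W\end{bmatrix}$, followed by \propref{decomp} applied to $A^\si$ and \propref{HM} to identify $\Ber^{\s^\si}(A^\si)$ with $\Bers(A)$. The ample-invertibility of $A^\si$ that you flag as the remaining obstacle is indeed the only loose end, and the paper simply absorbs it into the cited results (it is part of what \propref{HM} and the quasideterminant machinery of \cite{HM} already guarantee), so no further argument is given there.
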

\begin{proof}
The first assertion is clear.
Let $\si \in \fS_{m+n}$ be the permutation defined by $\si(k+i)=i$ for $i=1, \ldots, m+n-k$ and $\si(j)=m+n-k+j$ for $j=1, \ldots,  k$.
Then
$$
A^\si=\begin{bmatrix} Z & Y \\ X& W \end{bmatrix}.
$$
The equality \eqref{dec2} is now a direct consequence of \propref{decomp} and \propref{HM}.
\end{proof}

We also have the following.

\begin{prop}[{\cite[Lemma 8]{CFR}}] \label{CFR}
Let $A$ be a sufficiently invertible Manin matrix of type $\s_0:=(0^m)$ over $\cA$.
Then $\Ber^{\s_0} (A)=\cdet (A)$.
\end{prop}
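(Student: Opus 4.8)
The final statement is Proposition~\ref{CFR}, which claims that for a sufficiently invertible Manin matrix $A$ of type $\s_0 = (0^m)$, the Berezinian $\Ber^{\s_0}(A)$ coincides with the column determinant $\cdet(A)$. Note that when all $s_i = 0$, we have $\hs_i = (-1)^{s_i} = 1$ for every $i$, so by definition
$$
\Ber^{\s_0}(A) = d_1(A)^{1} \cdots d_m(A)^{1} = d_1(A) \cdots d_m(A),
$$
a product of the principal quasiminors $d_i(A)$. The goal is thus to show that this product of quasiminors equals $\cdet(A)$.

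The plan is to recognize that in the purely even case $\s_0 = (0^m)$, a Manin matrix of type $\s_0$ is just an ordinary Manin matrix over $\cA$ (all entries are even, and the defining relations reduce to $[a_{i,j}, a_{k,l}] = [a_{k,j}, a_{i,l}]$ with ordinary commutators). For ordinary Manin matrices, the factorization of the column determinant as a product of principal quasiminors is a known identity — this is precisely the content cited from \cite[Lemma 8]{CFR}. First I would observe that the result is a direct invocation of that lemma, so the cleanest route is simply to cite it; but to make the excerpt self-contained, I would also indicate the inductive mechanism behind it. The natural approach is induction on $m$ using the block decomposition already available in \propref{decomp} (specialized to $\s = \s_0$, so both $\s'$ and $\s''$ are again all-zero sequences).

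Concretely, I would proceed by induction on the size $m$. The base case $m=1$ is trivial since $\cdet(A) = a_{1,1} = d_1(A)$. For the inductive step, I would write $A$ in block form with $k = m-1$, so that $W$ is the top-left $(m-1)\times(m-1)$ block, and apply \propref{decomp} to get
$$
\Ber^{\s_0}(A) = \Ber^{\s_0'}(W) \cdot \Ber^{(0)}\big(Z - YW^{-1}X\big),
$$
where the second factor is a $1\times 1$ Berezinian, i.e.\ just the scalar $Z - YW^{-1}X = a_{m,m} - \sum Y_j (W^{-1})_{jk} X_k$. By the induction hypothesis $\Ber^{\s_0'}(W) = \cdet(W)$, so it remains to prove the Schur-complement-type identity $\cdet(A) = \cdet(W)\cdot \big(a_{m,m} - YW^{-1}X\big)$ for ordinary Manin matrices. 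This column-determinant Schur complement formula is the standard tool underlying \cite[Lemma 8]{CFR}, and I would either invoke it directly or expand $\cdet(A)$ along its last column using the cofactor/Laplace-type expansion valid for Manin matrices.

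The main obstacle is the verification that the column determinant of a Manin matrix admits the Schur-complement factorization — this is \emph{not} true for arbitrary noncommutative matrices and relies essentially on the Manin commutation relations, which guarantee that the cofactor expansion of $\cdet$ behaves as in the commutative case. The Manin condition is exactly what makes both $\cdet$ and the quasideterminant factorization well-behaved and mutually compatible. Since the paper has already set up \propref{decomp}, \propref{basics}, and \propref{HM}, and since the quasiminor factorization of $\cdet$ for ordinary Manin matrices is precisely the cited \cite[Lemma 8]{CFR}, the honest and efficient proof is to note that $\s_0$-type Manin matrices are ordinary Manin matrices and then quote that lemma, with the inductive block argument above serving as the conceptual justification. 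I expect the entire proof to be a short paragraph that unwinds the definition of $\Ber^{\s_0}$ as $\prod_i d_i(A)$ and cites \cite[Lemma 8]{CFR} to identify this product with $\cdet(A)$.
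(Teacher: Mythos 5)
The paper offers no proof of this statement at all: it is imported verbatim as \cite[Lemma 8]{CFR}, exactly as you propose to do. Your supplementary sketch — unwinding $\Ber^{\s_0}(A)=d_1(A)\cdots d_m(A)$ and inducting on $m$ via \propref{decomp} together with the Schur-complement factorization $\cdet(A)=\cdet(W)\cdot\bigl(a_{m,m}-YW^{-1}X\bigr)$ valid for Manin matrices — is a correct account of the mechanism behind the cited lemma, so your proposal matches the paper's treatment and adds a sound justification on top.
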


\subsection{The Bethe algebras for $\gld$ and $\gls$} \label{BA-gls}

Let $t$ be an even variable and
$$
\tau=-\partial_t.
$$
Here $t$ and $\partial_t$ satisfy the rules similar to \eqref{rule}.
Let $\w=(w_1,\ldots, w_d) \in \C^d$, and recall $\mu_\w \in \gld^*$ defined by \eqref{muw1} and \eqref{muw2}.
Write $$\Psi^\w= \Psi^{\mu_\w}.$$
The map $\Psi^\w$ extends to an algebra homomorphism
$$
\Psi^\w :  U(\gldm) [\tau]  \longrightarrow U(\gld[t])  \dsz [\pz]
$$
such that $\Psi^\w(\tau)=\pz$.

Let
$$
\cT_d = \Big[\delta_{a, b} \tau+e_{ab}[-1] \Big]_{a,b=1,\ldots,d},
$$
which is a Manin matrix of type $(0^d)$.
The vectors $S_1, \ldots, S_d \in \fz(\wh{\gl}_d)$, mentioned in \eqref{cSS}, are the coefficients of $\cdet (\cT_d) \in U(\gldm) [\tau]$; more precisely,
\begin{equation} \label{cdet}
 \cdet (\cT_d)=\tau^d+\sum_{i=1}^{d}S_i \tau^{i-1}
\end{equation}
 (see \cite[Theorem 3.1]{CM}).
Now consider the Manin matrix
\begin{equation} \label{Ldw}
\Ldw := \Big[\Psi^\w \big(\delta_{a, b} \tau+e_{ab}[-1] \big)\Big]_{a,b=1,\ldots,d}
= \Big[\delta_{a,b}(\pz-w_a)-e_{ab}(z)\Big]_{a,b=1,\ldots,d}.
\end{equation}
Then in view of \eqref{cdet},
\begin{equation} \label{cdet_SS}
 \cdet (\Ldw)=\pz^d+\sum_{i=1}^{d}S_i^{\w}(z)\pz^{i-1},
\end{equation}
where $S_i^{\w}(z):= \Psi^\w(S_i)$.
Write
$$
\Bdw=\Bgdw.
$$
Recall from \propref{generator} that $\Bdw$ is equal to the subalgebra of $U(\gld[t])$ generated by the coefficients of $S_i^{\w}(z)$ for $i=1, \ldots, d$.

Fix $\s:=(0^p, 1^q, 0^m, 1^n) \in \cS_\pqmn$.
Let
$$
\cT_\pqmn=\Big[\delta_{i,j}\tau+(-1)^{|i|}E^i_j[-1]\Big]_{i,j\in \I}.
$$
It is straightforward to verify that $\cT_\pqmn$ is an amply invertible Manin matrix of type $\s$ over $U(\glsm) \blb \tau^{-1} \brb$ (cf. \cite[Lemma 3.1]{MR}).
We have the following expansion:
\begin{equation} \label{Bers-T}
\Bers  \! \left(\cT_\pqmn \right)=\sum_{i=-\infty}^{p+m-q-n}b_i \tau^i,
\end{equation}
for some $b_i \in U(\glsm)$.

Let $\wh{\fz}_\pqmn$ be the subalgebra of $U(\glsm)$ generated by
$$
\{\T^r b_i \, | \, i \le p+m-q-n, \, i \in \Z, \, r \in \Zp \},
$$
where $\T$ is defined by \eqref{der}.
According to \cite[Corollary 3.3]{MR} (see also  \cite[Proposition 3.3]{ChL25-1}), $b_i \in \fzpmqn$ for all $i \in \Z$ with $i \leq p+m-q-n$.
Since $ \fzpmqn$ is $\T$-invariant, we see that $\wh{\fz}_\pqmn$ is a commutative subalgebra of $ \fzpmqn$.

There is a conjecture asserting that $\fzpmqn=\wh{\fz}_\pqmn$ (see \cite{MR}).
The conjecture holds if $q=n=0$ (see \cite[Theorem 3.1]{CM} and also \thmref{FF}).
It is also true that $\fz(\wh{\gl}_{1|1})=\wh{\fz}_{1|1}$ (\cite{MM15}) and $\fz(\wh{\gl}_{2|1})=\wh{\fz}_{2|1}$ (\cite{AN}).
However, it is still unknown whether the conjecture is valid in general.

Now let $\z=(z_1,\ldots, z_{p+q+m+n}) \in \C^{p+q+m+n}$.
Define $\mu_\z \in \gls^*$ by
$$
\hspace{1cm}  \mu_\z(E^i_j)=(-1)^{|i|+1} \de_{i,j} z_i \qquad \mbox{for $i, j \in \I$.}
$$
Similar to \eqref{psi-mu}, we have a superalgebra homomorphism
$$
 \Psi^\z :  U(\glsm)   \longrightarrow U(\gls[t]) \blb z^{-1}\brb
$$
given by
 $$
  \Psi^\z(A[-r])=A \otimes (t-z)^{-r}+\de_{r,1} \mu_\z(A), \quad \mbox{for $A \in \gls \,$ and $\, r \in \N$.}
 $$
The map $\Psi^\z$ extends to a superalgebra homomorphism
$$
\Psi^\z :  U(\glsm) \blb \tau^{-1} \brb  \longrightarrow U(\gls[t]) \zpz
$$
given by
 $$
\Psi^\z  \left(\sum_{i=-\infty}^s a_i \tau^i \right)=\sum_{i=-\infty}^s \Psi^\z (a_i) \pz^i
 $$
for $a_i \in U(\glsm)$ and $s \in \Z$.
Consider the matrix
\begin{equation} \label{Lsz}
\Lsz := \Big[\Psi^\z \big(\delta_{i,j}\tau+(-1)^{|i|}E^i_j[-1] \big)\Big]_{i,j\in \I}
= \Big[\de_{i,j}(\pz-z_i)-(-1)^{|i|}E^i_j(z)\Big]_{i,j\in \I}.
\end{equation}
It is an amply invertible Manin matrix of type $\s$ over $U(\gls[t]) \zpz$.
The following is an immediate consequence of \propref{HM}.

\begin{prop} \label{Ber-sig-L}
For each $\si \in \fS_{p+q+m+n}$,
$$
\Ber^{\s^\si} \big(\big(\Lsz\big)^\si \big)=\Bers \big( \Lsz \big).
$$
\end{prop}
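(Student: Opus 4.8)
The plan is to derive the identity as a direct instance of \propref{HM}. That proposition asserts the permutation invariance of the Berezinian for any amply invertible Manin matrix, and the present statement is exactly this invariance applied to the matrix $\Lsz$ of \eqref{Lsz}. The one hypothesis required is that $\Lsz$ be an amply invertible Manin matrix of type $\s$ over the superalgebra $\cA:=U(\gls[t])\zpz$ of pseudo-differential operators, and this has already been recorded in the text immediately preceding the proposition.

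Concretely, I would set $\cA=U(\gls[t])\zpz$ and $A=\Lsz$ in \propref{HM}. Since $\s=(0^p,1^q,0^m,1^n)\in\cS_\pqmn$ is a $0^{p+m}1^{q+n}$-sequence, the integers $m$ and $n$ appearing in \propref{HM} are played here by $p+m$ and $q+n$, so that the symmetric group $\fS_{m+n}$ becomes $\fS_{p+q+m+n}$. With these identifications \propref{HM} applies verbatim and yields $\Ber^{\s^\si}\big((\Lsz)^\si\big)=\Bers(\Lsz)$ for every $\si\in\fS_{p+q+m+n}$, which is the claim.

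There is no genuine obstacle: the entire content of the proposition is packaged into \propref{HM}, and the only point to confirm is the Manin property and ample invertibility of $\Lsz$. If one wished to make this self-contained rather than cite the assertion above, the cleanest route is to observe that $\Psi^\z$ is a superalgebra homomorphism carrying the amply invertible Manin matrix $\cT_\pqmn$ of type $\s$ to $\Lsz$; since the Manin relations are algebraic identities among the entries and $\Psi^\z$ preserves parities and commutators, these relations (and the requisite invertibility of the principal quasiminors of every standard submatrix) transfer to $\Lsz$. Thus the proof is genuinely a one-line application of \propref{HM}.
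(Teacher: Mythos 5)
Your proposal is correct and matches the paper exactly: the text immediately preceding the proposition records that $\Lsz$ is an amply invertible Manin matrix of type $\s$ over $U(\gls[t])\zpz$, and the paper then states the result as an immediate consequence of \propref{HM}. Your extra remark that the Manin property transfers from $\cT_\pqmn$ via the superalgebra homomorphism $\Psi^\z$ is a reasonable way to justify the hypothesis if one wants it self-contained.
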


By \eqref{Bers-T}, we have
\begin{equation} \label{Bers}
\Bers \big( \Lsz \big)=\sum_{i=-\infty}^{p+m-q-n}b_i^{\z}(z)\pz^i,
\end{equation}
where $b_i^{\z}(z):=\Psi^\z (b_i)$.
Let $\Bsz:=\Bgsz$ be the subalgebra of $U(\gls[t])$ generated by the coefficients of the series $b_i^{\z}(z)$, for $i \in \Z$ with $i \le p+m-q-n$.
It is called the \emph{Bethe algebra} for $\gls$ with respect to $\z$.
We remark that $\Bsz$ coincides with the subalgebra of $U(\gls[t])$ generated by the coefficients of $\Psi^\z(S)$ for $S \in \wh{\fz}_\pqmn$ (as $\Psi^\z ( \T(S) )=\frac{d}{dz} ( \Psi^\z(S))$).
If we take $q=n=0$, we recover the Bethe algebra for $\glpm$ by virtue of \propref{CFR}.

According to \cite[Corallary 3.7]{MR}, the algebra $\Bsz$ is a commutative subalgebra of $U(\gls[t])$.
It also commutes with the Cartan subalgebra $\fh_{\pqmn}$ of $\gls$ and hence acts on any weight space of a $\gls$-module.

For any $\Bsz$-module $V$, let $\big(\Bsz \big)_V$ denote the image of the Bethe algebra $\Bsz$ in $\End(V)$. We call $\big(\Bsz \big)_V$  the \emph{Bethe algebra} for $V$ with respect to $\z$.

\subsection{The Bethe duality of $(\gld, \gls)$} \label{Bethe-duality}

Let $\F:=\C[\x, \y, \et, \zet]$ be the polynomial superalgebra generated by $\xai$, $\eaj$, $\yar$, and $\zas$, for $i=1, \ldots, m$, $j=1, \ldots, n$, $r=1, \ldots, p$, $s=1, \ldots, q$, and $a=1, \ldots, d$. Here $\xai$ and $\yar$ are even variables while $\eaj$ and $\zas$ are odd.
With an appropriate inner product, $\F$ may be regarded as the Fock space of $d(p+m)$ bosonic and $d(q+n)$ fermionic oscillators (cf. \cite{CL03, LZ06}).
Let $\cD:={\mathbb D}[\x, \y, \et, \zet]$ be the corresponding \emph{Weyl superalgebra}, i.e., $\cD$ is the associative unital superalgebra generated by
$\xai, \, \eaj, \, \yar, \, \zas$
as well as their derivatives
$$
\pxai:=\frac{\pa}{\pa x_i^a}, \quad \peaj:=\frac{\pa}{\pa\eta_j^a}, \quad \pyar:=\frac{\pa}{\pa y_r^a}, \quad \pzas:=\frac{\pa}{\pa\zeta_r^a}
$$
for $1\le i \le m$, $1\le j \le n$, $1\le r \le p$, $1\le s \le q$, and $1 \le a \le d$.
The superalgebra $\cD$ acts naturally on $\F$, and so it is a subalgebra of $\End(\F)$.

According to \cite[pp. 789--790]{CLZ}, there is a superalgebra homomorphism $\phi : U(\gld) \longrightarrow \cD$, given by
\begin{equation}\label{phi}
e_{ab} \mapsto \sum_{i=1}^m x_i^a \pxbi+\sum_{j=1}^n\eta_j^a \pebj-\sum_{r=1}^p y_{r}^{b} \pyar -\sum_{s=1}^q\zeta_s^{b} \pzas,
\end{equation}
for $1 \le a, b \le d$,
and a superalgebra homomorphism $\varphi : U(\gls) \longrightarrow \cD$, given by
\begin{equation} \label{ovphi}
{\everymath={\displaystyle}
\begin{array}{cc}
E^r_{r'} \mapsto -\sum_{a=1}^d \pyar y_{r'}^a, \quad
&  E^r_{p+s} \mapsto \sum_{a=1}^d \pyar \zeta_s^a, \\
E^{p+s}_r \mapsto -\sum_{a=1}^d \pzas y_r^a, \quad
&E^{p+s}_{p+s'} \mapsto \sum_{a=1}^d \pzas \zeta_{s'}^a, \\
E^r_{p+q+i} \mapsto \sum_{a=1}^d \pyar  \pxai,\quad
&E^r_{p+q+m+j} \mapsto \sum_{a=1}^d \pyar \peaj,\\
E^{p+s}_{p+q+i} \mapsto \sum_{a=1}^d  \pzas \pxai,\quad
&E^{p+s}_{p+q+m+j} \mapsto \sum_{a=1}^d  \pzas \peaj, \\
E^{p+q+i}_r\mapsto -\sum_{a=1}^d x^a_iy_{r}^a,\quad
& E^{p+q+i}_{p+s} \mapsto \sum_{a=1}^dx^a_i\zeta_s^a,\\
E^{p+q+m+j}_r\mapsto -\sum_{a=1}^d \eta^a_jy_{r}^a,\quad
& E^{p+q+m+j}_{p+s} \mapsto \sum_{a=1}^d\eta^a_j\zeta_s^a, \quad\\
E^{p+q+i}_{p+q+i'} \mapsto \sum_{a=1}^d x_i^a\pxaip,\quad
& E^{p+q+i}_{p+q+m+j} \mapsto  \sum_{a=1}^d x_i^a \peaj,   \\
E^{p+q+m+j}_{p+q+i} \mapsto  \sum_{a=1}^d\eta_j^a \pxai,\quad
 & E^{p+q+m+j}_{p+q+m+j'} \mapsto \sum_{a=1}^d\eta_{j}^a \peajp.
\end{array}}
\end{equation}
where $1\le r, r^\prime \le p$, $1\le s, s^\prime \le q$, $1\le i, i' \le m$, and $1\le j, j' \le n$.
These maps induce a $\gld \times \gls$-action on $\F$.

Let $\bpi: \gld \longrightarrow \gld$ be the $*$-structure on $\gld$ defined by
$$
\bpi(e_{ab})=e_{ba} \qquad \mbox{for $1 \le a, b \le d$.}
$$
Note that any unitarizable highest weight $\gld$-module with respect to $\bpi$ is finite-dimensional.
Recall the $*$-structure $\sig_p$ on $\gls$ defined by \eqref{star-gl}.
The maps $\bpi$ and $\sig_p$ give rise to a $*$-structure $(\bpi, \sig_p)$ on $\gld \times \gls$.
It is well-known that $\F$ decomposes into a direct sum of unitarizable $\gld\times \gls$-modules (with respect to $(\bpi, \sig_p)$). This is called the \emph{Howe duality of $(\gld, \gls)$} (\cite{CLZ}). Such unitarizable modules are, in general, infinite-dimensional.
Taking $p=q=0$, the maps $\phi$ and $\varphi$ specialize to the superalgebra homomorphisms $U(\gld) \longrightarrow {\mathbb D}[\x, \et]$ and $U(\glmn) \longrightarrow {\mathbb D}[\x, \et]$, respectively, and the duality reduces to the Howe duality of $(\gld,  \glmn)$, which yields a decomposition of $\C[\x, \et]$ into a direct sum of finite-dimensional $\gld\times \glmn$-modules (\cite{CW, Se01-1, Se01-2}). Assuming further that $n=0$, it gives the Howe duality of $(\gld,  \gl_m)$ (\cite{H}).

We simplify notations for the rest of this subsection. Let
$$
x^a_{m+j}=\eta^a_j,  \quad \pa_{x^a_{m+j}}=\peaj, \quad y^a_{p+s}=\xi^a_s, \quad  \pa_{y^a_{p+s}}=\pzas,
$$
where $1\le j \le n$ and $1\le s \le q$.
Fix two sequences
$$
\w:=(w_1,\ldots,w_d) \in \C^d \qquad \text{and} \qquad \z:=(z_1,\ldots, z_{p+q+m+n}) \in \C^{p+q+m+n}.
$$
The map $\phi$ induces a superalgebra homomorphism
$
\phi_\z : U(\gld[t])\longrightarrow \cD
$
defined by
\begin{equation}\label{phi_z}
e_{ab}[k] \mapsto  -\sum_{r=1}^{p+q} z_r^k \ybr \pyar + \sum_{i=1}^{m+n} z_{p+q+i}^k \xai \pxbi,
\end{equation}
for $1 \le a, b \le d$ and $k \in \Zp$, and $\varphi$ induces a superalgebra homomorphism $\varphi_\w : U(\gls[t]) \longrightarrow \cD$ defined by
\begin{equation}\label{ovphi_w}
{\everymath={\disp}
\begin{array}{cll}
E^r_s [k] &\mapsto \sum_{a=1}^{d} (-1)^{|s|+1} w_a^k \pyar \yas, \quad
E^r_{p+q+j} [k] &\mapsto \sum_{a=1}^{d} w_a^k \pyar \pxaj, \\
E^{p+q+i}_s [k]&\mapsto\sum_{a=1}^{d} (-1)^{|s|+1} w_a^k \xai \yas,\quad
E^{p+q+i}_{p+q+j} [k]  &\mapsto \sum_{a=1}^{d} w_a^k \xai \pxaj,
\end{array}}
\end{equation}
for $1 \le r, s \le p+q$, $1 \le i, j \le m+n$, and $k \in \Zp$.
Since the Bethe algebras $\Bdw$ and $\Bsz$ are subalgebras of $U(\gld[t])$ and $U(\gls[t])$, respectively, the maps $\phi_\z$ and $\varphi_\w$ induce an action of $\Bdw$ on $\F$ and an action of $\Bsz$ on $\F$, respectively.

In view of the Howe duality, it is natural to ask if any duality exists in the context of Bethe algebras as well.
When $p=q=n=0$, a duality phenomenon is noticed in \cite{TL} (see also \cite{TV}).
In \cite{MTV09}, the precise statement is made and proved.
So far, various generalizations have emerged (see \cite{HM, TU, VY}). We will show that the desired duality exists between $\Bdw$ and $\Bsz$ in a moment.

The maps $\phi_\z$ and $\varphi_\w$ extend naturally to the superalgebra homomorphisms
$$
\phi_\z: U(\gld[t]) \zpz \longrightarrow \cD \zpz
$$
and
$$
\varphi_\w: U(\gls[t])\zpz \longrightarrow \cD \zpz,
$$
respectively.
Furthermore, there is an anti-involution $\omega : \cD \longrightarrow \cD$ defined by
\begin{equation} \label{anti-inv}
 \xai \mapsto  \pxai,  \quad \pxai \mapsto \xai,  \quad \yar \mapsto \pyar,  \quad \pyar  \mapsto \yar,
\end{equation}
for $1 \le i \le m+n$, $1 \le r  \le p+q$, and $1 \le a \le d$.
In light of \propref{omega}, the map $\omega$ extends naturally to an anti-involution
$$
\omega : \cD \zpz \longrightarrow \cD \zpz.
$$

For $u=z$ or $\pz$, let
$$
R_{p|q}(u)=\frac{(u-z_1)\ldots(u-z_p)}{(u-z_{p+1})\ldots(u-z_{p+q})}
$$
and
$$
R^\prime_{m|n}(u)=\frac{(u-z_{p+q+1})\ldots(u-z_{p+q+m})}{(u-z_{p+q+m+1})\ldots(u-z_{p+q+m+n})}.
$$
We have the following proposition.
We defer its proof until Appendix \ref{Bers-cdet} to avoid disrupting the flow of the paper.

\begin{prop} \label{omega-Ber}
Let $\s=(0^p, 1^q, 0^m, 1^n) \in \cS_\pqmn$. Then
$$
(z-w_1)\ldots(z-w_d) \,  \varphi_\w  \Bers \big( \Lsz \big) = \omega \lb R^\prime_{m|n}(z) \cdot \phi_\z  \cdet (\Ldw) \cdot R_{p|q}(z) \rb.
$$
\end{prop}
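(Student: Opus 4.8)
The plan is to realize both sides as two different block-expansions of the Berezinian of a single enlarged Manin supermatrix over $\cD\zpz$, and to let the anti-involution $\omega$ convert one expansion into the other. First I would make the images explicit. Summing the geometric series $\sum_{k\ge 0} w_a^k z^{-k-1} = (z-w_a)^{-1}$ in \eqref{ovphi_w}, each entry of $\varphi_\w(\Lsz)$ becomes $\de_{i,j}(\pz - z_i)$ minus a sum over $a=1,\dots,d$ of an oscillator bilinear weighted by $(z-w_a)^{-1}$, so that in matrix form
$$
\varphi_\w(\Lsz) = \pz\, I - \operatorname{diag}(z_i)_{i\in\I} - \Theta\,\operatorname{diag}\!\big((z-w_a)^{-1}\big)_{a=1}^d\,\Phi,
$$
where $\Theta$ is the $(p{+}q{+}m{+}n)\times d$ matrix of creation oscillators $\yar,\xai$ (up to sign) and $\Phi$ the $d\times(p{+}q{+}m{+}n)$ matrix of annihilation oscillators $\pyar,\pxai$. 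By the identical computation, $\phi_\z(\Ldw)$ is the $d\times d$ operator $\pz\, I - \operatorname{diag}(w_a) - \Phi\,\operatorname{diag}((z-z_i)^{-1})\,\Theta$ built from the very same oscillators with the roles of $\z$ and $\w$ interchanged and with the creation and annihilation factors in the \emph{opposite} order. This shared low-rank structure, together with the order-reversal, is the engine of the duality.

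Second, I would assemble the $(p{+}q{+}m{+}n{+}d)$-square supermatrix
$$
\mathbb{L} = \begin{bmatrix} \pz I - \operatorname{diag}(z_i) & \Theta \\ \Phi & \operatorname{diag}(z-w_a) \end{bmatrix}
$$
of type $(\s,0^d)$ over $\cD\zpz$, and check that it is an amply invertible Manin matrix; the defining Manin relations follow from the canonical commutation relations in $\cD$, and this is where the parities of $\eta,\zeta$ must be tracked. Eliminating the bottom-right (even, invertible) $d\times d$ block by \propref{block2} produces the Schur complement $\pz I - \operatorname{diag}(z_i) - \Theta\operatorname{diag}((z-w_a)^{-1})\Phi = \varphi_\w(\Lsz)$, together with the factor $\Ber^{0^d}(\operatorname{diag}(z-w_a)) = \cdet(\operatorname{diag}(z-w_a)) = \prod_a(z-w_a)$ by \propref{CFR}. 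Since $\varphi_\w$ commutes with the (rational) Berezinian operation, this expansion gives exactly the left-hand side $\prod_a(z-w_a)\,\varphi_\w\Bers(\Lsz)$.

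Third, eliminating instead the top-left block by \propref{decomp}: because that block is diagonal, its Berezinian of type $\s$ equals $\prod_{i\in\I}(\pz - z_i)^{(-1)^{s_i}}$, which is precisely $R_{p|q}(\pz)\,R'_{m|n}(\pz)$, the odd indices of $\s$ contributing the denominators. The complementary factor is, again by \propref{CFR}, the column determinant of the $d\times d$ Schur complement $\operatorname{diag}(z-w_a) - \Phi(\pz I - \operatorname{diag}(z_i))^{-1}\Theta$. Equating the two expansions and applying $\omega$ — which swaps $z\leftrightarrow\pz$, interchanges $\yar\leftrightarrow\pyar$ and $\xai\leftrightarrow\pxai$, and reverses products — I would identify this Schur complement with a scalar conjugate of $\omega(\phi_\z(\Ldw))$ by $R(z):=R_{p|q}(z)R'_{m|n}(z)$: the conjugation shifts $\pz\mapsto \pz - R'(z)/R(z)$ and absorbs precisely the normal-ordering corrections $\partial x = x\partial + 1$ generated when the $\Phi$-oscillators are moved past the $\Theta$-oscillators. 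The compatibility of $\cdet$ with $\omega$ on Manin matrices (so that $\omega(\cdet N)=\cdet(\omega N)$ after the requisite transposition, via \propref{HM} and \propref{basics}) then redistributes the rational factors into the two-sided form $R'_{m|n}(z)\cdot\phi_\z\cdet(\Ldw)\cdot R_{p|q}(z)$, yielding the claim.

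The hard part is the bookkeeping in the third step. Verifying that $\mathbb{L}$ is genuinely a Manin matrix of type $(\s,0^d)$, so that \propref{decomp}, \propref{block2}, and \propref{HM} are applicable, is a sign-sensitive check; and tracking every super-sign, order-reversal, and normal-ordering term through $\omega$ so that the corrections reassemble \emph{exactly} into $R_{p|q}(z)$, $R'_{m|n}(z)$, and $\prod_a(z-w_a)$ in the correct left/right placement is where all the subtlety lies. Everything else is routine, if lengthy, which is the reason to defer the verification to the appendix.
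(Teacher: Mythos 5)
Your proposal follows essentially the same route as the paper's Appendix~\ref{Bers-cdet}: the paper forms the enlarged Manin matrix $\fL=\begin{bmatrix} D & T\\ T' & D'\end{bmatrix}$ of type $\hat{\s}=(0^d,\s)$ (your $\mathbb{L}$ is its block-permuted version, equivalent via \propref{HM}), computes $\Ber^{\hat{\s}}(\fL)$ by the two Schur-complement expansions of \propref{decomp} and \propref{block2} together with \propref{CFR}, and handles the $\omega$-step via a separate lemma showing $\omega\big(\phi_\z\cdet(\Ldw)\big)=\cdet(\mc C)$ followed by the entrywise commutation of $R^\prime_{m|n}(\pz)$ past the $d\times d$ Schur complement, which absorbs exactly the normal-ordering corrections you flag as the delicate point. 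Your plan and its identified difficulties match the paper's proof.
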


The Cartan anti-involution $\varpi$ on $\gld$, defined by \eqref{Cartan-anti}, gives rise to an anti-involution $\varpi: U(\gld[t]) \longrightarrow U(\gld[t])$, defined by
$$
\hspace{1.5cm} \varpi(A[k])=\varpi(A)[k] \qquad \mbox{for $A \in \gld$ and $k \in \Zp$}.
$$

\begin{prop}[{\cite[Proposition 8.4]{MTV06}}] \label{C-inv-Gau}
The anti-involution $\varpi: U(\gld[t]) \longrightarrow U(\gld[t])$ fixes all elements of $\Bdw$.
\end{prop}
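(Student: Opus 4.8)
The plan is to reduce the statement to the single assertion that $\varpi$ fixes the Segal--Sugawara vectors of $\fzd$, and then to run the argument of \propref{anti-cent} with the $\ast$-structure $\sig$ replaced by the (complex-linear) Cartan anti-involution $\varpi$. First I would pass to generators. By \propref{generator}, $\Bdw$ is generated by the coefficients of the series $S_i^\w(z)=\Psi^\w(S_i)$ for $i=1,\dots,d$; since $\Bdw$ is commutative and $\varpi$ is an anti-involution, a product of such generators is sent by $\varpi$ to the reversed product, which equals the original by commutativity. Hence it suffices to prove $\varpi\big(S_i^\w(z)\big)=S_i^\w(z)$ for each $i$.

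Next I would establish the compatibility $\varpi\circ\Psi^\w=\Psi^\w\circ\varpi$. Extend $\varpi$ to $U(\gldm)$ by $\varpi(e_{ab}[-r])=e_{ba}[-r]$ and to the target $U(\gld[t])\dsz$ by fixing $z$. Both $\varpi\circ\Psi^\w$ and $\Psi^\w\circ\varpi$ are anti-homomorphisms (a homomorphism composed with an anti-involution on either side), so it is enough to compare them on the generators $e_{ab}[-r]$. There they agree precisely because $\mu_\w$ is diagonal: $\mu_\w(e_{ab})=-w_a\delta_{a,b}=-w_b\delta_{a,b}=\mu_\w(e_{ba})$ by \eqref{muw1}--\eqref{muw2}. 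Consequently $\varpi\big(S_i^\w(z)\big)=\Psi^\w\big(\varpi(S_i)\big)$, and the problem collapses to showing $\varpi(S_i)=S_i$ in $\fzd$.

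For the last point I would mimic the proof of \propref{anti-cent}. The quadratic Segal--Sugawara vector $\sum_{a,b=1}^{d}e_{ab}[-1]e_{ba}[-1]$ is visibly $\varpi$-fixed (relabel $a\leftrightarrow b$ and use that $\varpi$ reverses products). By the $\gld$-analogue of \thmref{centralizer}, $\fzd$ is the centralizer of this vector in $U(\gldm)$, so $\varpi(S_i)$ again centralizes it and hence lies in $\fzd$. Writing $S_i=H_i+N_i$ as in \eqref{HC-decom} with $H_i\in U(\fhm)_\R$ and $N_i\in\mc N$ (recall $S_i\in\ugmr$), one has $\varpi(H_i)=H_i$ because $\varpi$ fixes $\fh$, and $\varpi(N_i)\in\mc N$ because $\varpi$ interchanges $\fnpm$ and $\fnmm$ while preserving the $\fh$-centralizer, using \eqref{N1}--\eqref{N2}. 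Thus $\hc(\varpi(S_i))=H_i=\hc(S_i)$, and injectivity of $\hc$ on the Feigin--Frenkel center forces $\varpi(S_i)=S_i$, as desired.

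The hard part is this final paragraph: it relies on the centralizer description of $\fzd$ and on the Harish--Chandra machinery of \secref{star} being available for $\gld$ and compatible with the \emph{complex-linear} anti-involution $\varpi$ (in \propref{anti-cent} the analogous role was played by the anti-linear $\sig$, but the argument only uses that the map is an anti-automorphism fixing the quadratic vector, so it should transfer verbatim). Should one instead prefer the direct route of \cite{MTV06}, the obstacle migrates to a Manin-matrix computation: extending $\varpi$ to an anti-involution $\Omega$ of $U(\gld[t])\dsz[\pz]$ with $\Omega(\pz)=-\pz$ (forced by $\pz z-z\pz=1$), one must show that $\Omega\big(\cdet(\Ldw)\big)$ equals the formal adjoint of $\cdet(\Ldw)$, so that the noncommutative reordering produced by the anti-involution is exactly compensated by the Manin relations. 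For $d=2$ the only surviving correction is $[e_{11}(z),e_{22}(z)]=0$, and the general case is the technical heart of that approach.
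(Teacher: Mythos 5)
Your argument is correct, but note that the paper itself offers no proof of this proposition: it is quoted directly from \cite[Proposition 8.4]{MTV06}, where it is established by an explicit computation showing that $\varpi$ fixes $\cdet(\Ldw)$ (essentially the ``formal adjoint'' computation you sketch at the end, carried out with Manin-matrix identities). What you give instead is a self-contained derivation that transplants the paper's own proof of \propref{anti-cent} from the anti-linear $\sig$ to the complex-linear Cartan anti-involution $\varpi$. All the steps check out: the reduction to the generators $S_i^\w(z)$ via \propref{generator} and the commutativity of $\Bdw$; the intertwining $\varpi\circ\Psi^\w=\Psi^\w\circ\varpi$, which holds on the generators $e_{ab}[-r]$ precisely because $\mu_\w(e_{ab})=\mu_\w(e_{ba})$ for the diagonal $\mu_\w$; and the final step $\varpi(S_i)=S_i$, which uses that $\varpi$ fixes $\qc$, \thmref{centralizer} (applicable to $\gld=\gl_{d+0}$), the decomposition \eqref{HC-decom} together with \eqref{N1}--\eqref{N2} (noting that the anti-automorphism $\varpi$ swaps $\fnpm$ and $\fnmm$ and preserves the $\fh$-centralizer), and the injectivity of $\hc$ on $\fzd$. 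Your version is in fact slightly cleaner than \propref{anti-cent}: since $\varpi$ is complex-linear, you do not need the reality $S_i\in\ugmr$ to conclude $\varpi(H_i)=H_i$; it suffices that $\varpi$ fixes $\fh$ pointwise and that $U(\fhm)$ is commutative. The trade-off relative to the cited computational proof is that you invoke Rybnikov's centralizer theorem and the Feigin--Frenkel isomorphism, but in exchange the argument is uniform across the anti-involutions appearing in this paper and avoids the Manin-matrix bookkeeping that you correctly identify as the technical heart of the direct route.
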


\begin{prop} \label{omega-inv}
For any $\bb \in \Bdw$, $\omega \big( \phi_\z(\bb) \big)=\phi_\z(\bb)$.
\end{prop}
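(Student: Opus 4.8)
The plan is to deduce the statement from \propref{C-inv-Gau}, which asserts that the Cartan anti-involution $\varpi$ fixes $\Bdw$ pointwise, by showing that $\omega$ intertwines $\phi_\z$ with $\varpi$. Concretely, I would first establish the identity
\[
\omega \circ \phi_\z = \phi_\z \circ \varpi
\]
as maps $U(\gld[t]) \longrightarrow \cD$. Granting this, for any $\bb \in \Bdw$ we have $\varpi(\bb)=\bb$ by \propref{C-inv-Gau}, and therefore $\omega\big(\phi_\z(\bb)\big)=\phi_\z\big(\varpi(\bb)\big)=\phi_\z(\bb)$, which is the claim. Note that we remain inside $\cD$ throughout, so no passage to the pseudo-differential extension is needed.

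To prove the intertwining identity, observe that both $\omega \circ \phi_\z$ and $\phi_\z \circ \varpi$ are anti-homomorphisms from $U(\gld[t])$ to $\cD$: the former because $\phi_\z$ is a homomorphism and $\omega$ is an anti-involution, the latter because $\varpi$ is an anti-involution and $\phi_\z$ is a homomorphism. Since $\gld$ is purely even, $U(\gld[t])$ carries no super-signs and these are honest anti-homomorphisms. Two anti-homomorphisms that agree on a generating set agree everywhere, so it suffices to verify the identity on the algebra generators $e_{ab}[k]$, for $1\le a,b\le d$ and $k\in\Zp$, of $U(\gld[t])$.

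The verification is a direct computation. Because $\varpi(e_{ab}[k])=e_{ba}[k]$, I would compute $\omega\big(\phi_\z(e_{ab}[k])\big)$ from the explicit formula \eqref{phi_z} together with the definition \eqref{anti-inv} of $\omega$, and match it against $\phi_\z(e_{ba}[k])$. Applying $\omega$ term by term and using $\omega(\xai)=\pxai$, $\omega(\pxbi)=\xbi$, $\omega(\ybr)=\pybr$, $\omega(\pyar)=\yar$ together with $\omega(uv)=\omega(v)\omega(u)$, the summand $\ybr\pyar$ is carried to $\yar\pybr$ and the summand $\xai\pxbi$ is carried to $\xbi\pxai$. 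This reproduces precisely the expression for $\phi_\z(e_{ba}[k])$ obtained by interchanging $a$ and $b$ in \eqref{phi_z}, establishing the identity on generators and hence in general.

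The one delicate point is the sign bookkeeping in the fermionic sector, i.e.\ for the odd variables $y^a_{p+s}=\xi^a_s$ and $x^a_{m+j}=\eta^a_j$, where $\omega$ is applied to products of two odd elements. Here I would invoke the sign-free convention $\omega(uv)=\omega(v)\omega(u)$ adopted for anti-involutions in \secref{pseudo} to confirm that no extra signs are introduced. This is essentially the only obstacle in the argument; once it is dispatched, the equality $\omega\circ\phi_\z=\phi_\z\circ\varpi$ follows, and the proposition is then immediate from \propref{C-inv-Gau}.
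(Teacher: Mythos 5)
Your proposal is correct and follows essentially the same route as the paper: the paper's proof consists precisely of observing that the diagram expressing $\omega\circ\phi_\z=\phi_\z\circ\varpi$ commutes (which it calls "straightforward to verify") and then invoking \propref{C-inv-Gau}. Your write-up simply fleshes out the generator-by-generator verification and the sign convention for $\omega$, both of which check out.
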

\begin{proof}
It is straightforward to verify that the diagram
\begin{eqnarray*} \label{com-omega}
\CD   U(\gld[t])
@> \phi_\z>>\cD \\
 @V \varpi VV @VV \omega V \\
U(\gld[t])
  @> \phi_\z>>\cD\\
\endCD
\end{eqnarray*}
commutes.
This, together with \propref{C-inv-Gau}, proves the proposition.
\end{proof}

The following theorem is called the \emph{Bethe duality of $(\gld, \gls)$}.
It not only relates the coefficients of $\phi_\z  \cdet (\Ldw)$ to those of $\varphi_\w  \Bers \big( \Lsz \big)$, but also gives an equality of algebras.

\begin{thm}\label{Bethe_duality}
Let $\s=(0^p, 1^q, 0^m, 1^n) \in \cS_\pqmn$.
We may write
$$
R_{p|q}(z) \cdot R^\prime_{m|n}(z) \cdot \phi_\z  \cdet (\Ldw)=\sum_{\ell=0}^d \sum_{k=-\infty}^{p+m-q-n} b_{k,\ell}^{\w,\z} z^k\pz^\ell,
$$
where $b_{k,\ell}^{\w,\z}:=\phi_\z(b_{k,\ell}^\w)$ for some $b_{k,\ell}^\w \in \Bdw$. Then
\begin{eqnarray*}
&  &(z-w_1)\ldots(z-w_d)\  \varphi_\w  \Bers \big( \Lsz \big) \\
& & \hspace{3cm} =R_{p|q}(\pz) \left(\sum_{k=-\infty}^{p+m-q-n} \sum_{\ell=0}^{d} b_{k, \ell}^{\w,\z} z^\ell \pa_z^{k}  \right) R_{p|q}(\pz)^{-1}.
\end{eqnarray*}
Moreover, $\phi_\z (\Bdw)=\varphi_\w \big(\Bsz \big)$.
\end{thm}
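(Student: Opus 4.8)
The plan is to obtain all three assertions from \propref{omega-Ber} and \propref{omega-inv}, treating the first as a bookkeeping expansion and the remaining two as formal consequences of applying the anti-involution $\omega$. First I would record, using \eqref{cdet_SS}, that
$$
\cdet (\Ldw)=\pz^d+\sum_{i=1}^{d}S_i^{\w}(z)\pz^{i-1},
$$
where the coefficients of each series $S_i^{\w}(z)\in U(\gld[t])\dsz$ lie in $\Bdw$ and generate it by \propref{generator}. The rational function $R_{p|q}(z)R^\prime_{m|n}(z)$, expanded at $z=\infty$, is a power series in $z^{-1}$ of top degree $p+m-q-n$ with invertible leading coefficient; since $z$ commutes with the coefficient algebra, left multiplication by it only recombines the $z$-coefficients of each $S_i^{\w}(z)$ into $\C$-linear combinations. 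Hence $R_{p|q}(z)R^\prime_{m|n}(z)\cdet (\Ldw)=\sum_{\ell=0}^{d}\sum_{k\le p+m-q-n} b_{k,\ell}^{\w}z^k\pz^\ell$ with $b_{k,\ell}^{\w}\in\Bdw$, and because $R_{p|q}(z)R^\prime_{m|n}(z)$ is invertible in $\C\blb z^{-1}\brb$ the $\C$-span of the $b_{k,\ell}^{\w}$ equals that of the coefficients of the $S_i^{\w}(z)$; in particular the $b_{k,\ell}^{\w}$ generate $\Bdw$. Applying the homomorphism $\phi_\z$, which fixes $z$ and $\pz$, yields the first displayed formula with $b_{k,\ell}^{\w,\z}=\phi_\z(b_{k,\ell}^{\w})$.

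Next I would establish the conjugation identity. The key observation is that $\omega$ is $\C$-linear, fixes scalars, and sends $a\,z^i\pz^j\mapsto\omega(a)\,z^j\pz^i$, so it converts any scalar series $f(z)$ into the same series $f(\pz)$; moreover each coefficient $b_{k,\ell}^{\w,\z}=\phi_\z(b_{k,\ell}^{\w})$ is $\omega$-fixed by \propref{omega-inv}. Applying $\omega$ to the expansion of the previous paragraph and using that $\omega$ reverses products gives
$$
\omega\big(\phi_\z\cdet (\Ldw)\big)\,R^\prime_{m|n}(\pz)\,R_{p|q}(\pz)=\sum_{k,\ell} b_{k,\ell}^{\w,\z}\,z^\ell\pz^k,
$$
whence $\omega\big(\phi_\z\cdet (\Ldw)\big)=\big(\sum_{k,\ell} b_{k,\ell}^{\w,\z}\,z^\ell\pz^k\big)\,R_{p|q}(\pz)^{-1}R^\prime_{m|n}(\pz)^{-1}$. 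On the other hand, \propref{omega-Ber} and the anti-multiplicativity of $\omega$ give
$$
(z-w_1)\ldots(z-w_d)\,\varphi_\w\Bers\big(\Lsz\big)=R_{p|q}(\pz)\,\omega\big(\phi_\z\cdet (\Ldw)\big)\,R^\prime_{m|n}(\pz).
$$
Substituting the expression for $\omega\big(\phi_\z\cdet (\Ldw)\big)$, the adjacent factors $R^\prime_{m|n}(\pz)^{-1}R^\prime_{m|n}(\pz)=1$ cancel, and the right-hand side collapses to $R_{p|q}(\pz)\big(\sum_{k,\ell} b_{k,\ell}^{\w,\z}\,z^\ell\pa_z^{k}\big)R_{p|q}(\pz)^{-1}$, which is the claimed formula.

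Finally I would deduce $\phi_\z(\Bdw)=\varphi_\w\big(\Bsz\big)$. Here I use crucially that in $\cD\zpz$ the coefficient superalgebra $\cD$ commutes with both $z$ and $\pz$. Since each $b_{k,\ell}^{\w,\z}\in\cD$ commutes with the function $R_{p|q}(\pz)$ of $\pz$, conjugation by $R_{p|q}(\pz)$ passes through the coefficients and only rewrites the monomials $z^\ell\pz^k$ as $b_{k,\ell}^{\w,\z}$ times a scalar pseudo-differential operator $R_{p|q}(\pz)z^\ell R_{p|q}(\pz)^{-1}\pz^k$; thus every coefficient of the conjugated operator is a $\C$-linear combination of the $b_{k,\ell}^{\w,\z}$, and conversely, conjugating back by $R_{p|q}(\pz)^{-1}$ expresses each $b_{k,\ell}^{\w,\z}$ as a $\C$-linear combination of those coefficients. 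Combining this with the fact that multiplying by the invertible scalar polynomial $(z-w_1)\ldots(z-w_d)$ does not change the $\C$-span of coefficients, the algebra generated by the coefficients of $\varphi_\w\Bers\big(\Lsz\big)$ coincides with the algebra generated by the $b_{k,\ell}^{\w,\z}$, i.e. $\varphi_\w\big(\Bsz\big)=\phi_\z(\Bdw)$. I expect this last step to be the main obstacle: one must verify that neither the outer scalar factor nor the conjugation by the pseudo-differential operator $R_{p|q}(\pz)$ enlarges or shrinks the generated subalgebra, which rests precisely on the centrality of $z,\pz$ over $\cD$ and the invertibility of the relevant scalar series, and the $z$--$\pz$ noncommutativity must be tracked carefully throughout.
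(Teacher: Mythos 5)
Your proof is correct and follows essentially the same route as the paper's: both rest on \propref{omega-Ber} together with \propref{omega-inv}, and differ only in that you solve for $\omega\big(\phi_\z\cdet(\Ldw)\big)$ from the coefficient expansion and substitute, while the paper inserts the factor $R_{p|q}(z)$ inside $\omega$ and conjugates by $R_{p|q}(\pz)$ directly. The extra care you take with the ``We may write'' expansion and with showing that conjugation by $R_{p|q}(\pz)$ and multiplication by scalar series preserve the span of coefficients correctly fills in the steps the paper leaves as ``clearly follows.''
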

\begin{proof}
By \propref{omega-Ber},
\begin{eqnarray*}
& & (z-w_1)\ldots(z-w_d) \, \varphi_\w \Bers \big( \Lsz \big)\\
& & \hspace{1.5cm} = R_{p|q}(\pz)  \cdot \omega \left( R^\prime_{m|n}(z) \cdot \phi_\z  \cdet (\Ldw) \right) \! \\
& & \hspace{1.5cm} = R_{p|q}(\pz) \cdot \omega \left(R_{p|q}(z) \cdot  R^\prime_{m|n}(z) \cdot \phi_\z  \cdet (\Ldw) \right) \cdot R_{p|q}(\pz)^{-1} \\
& & \hspace{1.5cm} =R_{p|q}(\pz) \left(\sum_{k=-\infty}^{p+m-q-n} \sum_{\ell=0}^{d} \omega\big(b_{k, \ell}^{\w,\z} \big)  z^\ell \pa_z^{k}  \right) R_{p|q}(\pz)^{-1}.
\end{eqnarray*}
By \propref{omega-inv}, $\omega\big(b_{k, \ell}^{\w,\z} \big) = b_{k, \ell}^{\w,\z}$
for all $k \le p+m-q-n$ and $0 \le \ell \le d$. This proves the first assertion.
The last assertion clearly follows from the first one.
\end{proof}

\begin{rem} \label{HM-rem}
We have learned from an anonymous expert that the last assertion of \thmref{Bethe_duality} can be obtained directly from the results of \cite{HM}.
An outline of his approach is given as follows.
Let $\pi$ be an automorphism on $\cD$ defined by
$$
 \xai \mapsto \xai,  \quad \pxai \mapsto \pxai,  \quad \yar \mapsto (-1)^{|r|+1} \pyar,  \quad \pyar  \mapsto \yar,
$$
for $1 \le i \le m+n$, $1 \le r  \le p+q$, and $1 \le a \le d$,
and let $\vartheta$ be an automorphism on $U(\gld[t])$ defined by
$$
e_{ab}[k] \mapsto e_{ab}[k]-\de_{a,b} \sum_{r=1}^{p+q}  (-1)^{|r|} z_r^k
$$
for $1 \le a, b \le d$ and $k \in \Zp$.
Note that $\vartheta$ restricts to an automorphism on $\Bdw$.
As a consequence of \cite[Theorem 5.2]{HM},
$$(\pi \circ \phi_\z \circ \vartheta) (\Bdw)=(\pi \circ \varphi_\w) (\Bsz),$$
which is equivalent to the last assertion of \thmref{Bethe_duality}.
\end{rem}

\subsection{An application of the Bethe duality} \label{app}

The Weyl superalgebra $\cD$ is a subalgebra of $\End(\F)$.
For $\w \in \C^d$ and $\z \in \C^{p+q+m+n}$, \thmref{Bethe_duality} implies that the image of $\Bdw$ in $\End(\F)$ under $\phi_\z$ coincides with the image of $\Bsz$ in $\End(\F)$ under $\varphi_\w$.
This makes it possible to study the action of $\Bsz$ on $\F$ via the action of $\Bdw$ on $\F$, and vice versa.

Let
$$
\dds=\sum_{a=1}^{d} \lb  \sum_{i=1}^m x_i^a \pxai+\sum_{j=1}^n\eta_j^a \peaj-\sum_{r=1}^p y_{r}^a \pyar -\sum_{s=1}^q\zeta_s^a \pzas \rb \!.
$$
For any monomial $f \in \F$, we have $\dds(f)=k f$ for some $k \in \Z$.
We define the degree of $f$ to be $k$.
In particular, the degrees of ${x^a_i}$ and ${\eta^a_j}$ are $1$ while the degree of $y^a_r$ and $\zeta^a_s$ are $-1$.
We call $\dds$ the degree operator on $\F$.

For $1\le r \le p$, $1\le s \le q$, $1\le i \le m$, and $1\le j \le n$,
let $\F_{(r)}=\C \big[y^1_r,\ldots, y^d_r \big]$, $\F_{(p+s)}=\C \big[\zeta^1_s,\ldots, \zeta^d_s \big]$, $\F_{(p+q+i)}=\C \big[x^1_i,\ldots, x^d_i \big]$,
and $\F_{(p+q+m+j)}= \C \big[\eta^1_j,\ldots, \eta^d_j \big]$.
It is evident that as $\gld$-modules,
\begin{equation} \label{gld-dec}
\F =  \F_{(1)} \otimes \cdots \otimes \F_{(p+q+m+n)}.
\end{equation}
For $1 \le a \le d$, let
$\F^{(a)}=\C \big[ x^a_{i}, \eta^a_{j}, y^a_{r}, \zeta^a_{s} \, \big| \, 1\le i \le m, 1\le j \le n, 1\le r \le p, 1\le s \le q \big]$.
As $\gls$-modules,
\begin{equation} \label{gls-dec}
\F =  \F^{(1)} \otimes \cdots \otimes \F^{(d)}.
\end{equation}

For any weight $\mu \in \fh_{\pqmn}^*$ of a $\gls$-module $M$, we denote by $M_\mu$ the $\mu$-weight space of $M$.
The Cartan subalgebra $\fh_{\pqmn}$ of $\gls$ acts on $\F$ as follows (see \eqref{ovphi}): $E^i_i$ acts on $\F$ by
$-\sum_{a=1}^{d} y^a_i  \pa_{y^a_i}- d, $
for $1 \le i \le p$,
and by
$-\sum_{a=1}^{d} \zeta^a_{i-p}  \pa_{\zeta^a_{i-p} }+ d,$
for $p+1 \le i \le q$.
Also, $E^{p+q+i}_{p+q+i}$ acts on $\F$ by
$\sum_{a=1}^{d} x^a_i \pa_{x^a_i},$
for $1 \le i \le m$, and by
$\sum_{a=1}^{d} \eta^a_{i-m} \pa_{\eta^a_{i-m}},$
for $m+1 \le i \le m+n$.
Clearly, there is a weight space decomposition
\begin{equation} \label{Fmu}
\F=  \bigoplus_{\mu\in \fh_{\pqmn}^{*}}  \F_\mu.
\end{equation}

For $\xi \in \fh_{\pqmn}^{*}$, we denote by $L_\pqmn (\xi)$ the irreducible highest weight $\gls$-module with highest weight $\xi$ with respect to the Borel subalgebra $\mf{b}_{\pqmn}$.
Let
$$
\opq=\sum_{i=1}^{p} \ep_i-\sum_{j=1}^{q} \ep_{p+j} \qquad \text{and} \qquad \op=\mathbbm{1}_{p|0}.
$$
Suppose $p\not=0\not=m$.
For $r \in \Z$, let
\begin{equation} \label{depth1}
V_{r} =
\begin{cases}
\disp{ L_{\pqmn} \big( r \ep_{p+q+1}- \opq \big)} &\quad \mbox{if $r \ge 0$,}\\
\disp{L_{\pqmn} \lb -\sum_{i=q+r+1}^q \ep_{p+i}-\opq \rb} &\quad \mbox{if $-q \le r<0$ and $q \not=0$,}\\
\disp{L_{\pqmn}\lb (r+q) \ep_p-\sum_{i=1}^q \ep_{p+i}-\opq \rb} &\quad \mbox{if $r<-q$ and $q \not=0$,}\\
\disp{L_{\pqmn} \big(r \ep_p- \op \big)}   &\quad  \mbox{if $r<0$ and $q=0$}.
\end{cases}
\end{equation}
The space $V_r$ is an infinite-dimensional unitarizable highest weight $\gls$-module (with respect to $\sig_p$), and the highest weight of $V_r$, appearing in \eqref{depth1}, corresponds to a generalized partition of depth 1; see \cite[Section 2.5]{CCL25-1} and also \cite{CLZ}. (Such a partition is called a generalized partition of length 1 in \cite{CLZ}.)

For $a=1, \ldots, d$, let $\ga_a \in \Z$. 
If $\ga_a \ge 0$, then we may identify $V_{\ga_a}$ with the $\gls$-submodule of the polynomial superalgebra $\F^{(a)}$ generated by  the highest weight vector $(x^a_1)^{\ga_a}$.
If $\ga_a < 0$, then we may identify $V_{\ga_a}$ with the $\gls$-submodule of $\F^{(a)}$ generated by the highest weight vector
$$
\begin{cases}
\zeta^a_{q+\ga_a+1} \ldots \zeta^a_q &\quad \text{if } \mbox{$\ga_a \ge - q$ and $q \not=0$,}\\
(y^a_p)^{-\ga_a-q} \zeta^a_1 \ldots \zeta^a_q &\quad \text{if } \mbox{$\ga_a < -q$ and $q \not=0$,}\\
(y^a_p)^{-\ga_a}   &\quad \text{if } q=0.
\end{cases}
$$
In any case, $V_{\ga_a}$ is the subspace of $\F^{(a)}$ spanned by all monomials of degree $\ga_a$.
Via \eqref{gls-dec}, the tensor product $V_{\ga_1} \otimes \cdots \otimes V_{\ga_d}$ is a $\gls$-submodule of $\F$.
We thus obtain a direct sum decomposition of $\gls$-modules:
\begin{equation} \label{F-decomp}
\F= \bigoplus_{\ga_1, \ldots, \ga_{d} \in \Z}  V_{\ga_1} \otimes \cdots \otimes V_{\ga_d}.
\end{equation}

Let $\fh_d$ denote the standard Cartan subalgebra of $\gld$.
The action of $\fh_d$ on $\F$ is determined by the action of $e_{aa}$ on $\F$, which is given by
$$
 \sum_{i=1}^m x_i^a \pxai+\sum_{j=1}^n\eta_j^a \peaj-\sum_{r=1}^p y_{r}^a \pyar -\sum_{s=1}^q\zeta_s^a \pzas,
$$
for $1 \le a \le d$ (see \eqref{phi}).
There is a weight space decomposition
\begin{equation} \label{Fga}
\F=  \bigoplus_{\ga \in \fh_{d}^{*}}  \F_\ga.
\end{equation}

For $1 \le i \le p+q$ and $k \in-\Zp$, let $\F_{(i)}^{k}$ be the subspace of $\F_{(i)}$ spanned by all monomials of degree $k$.
For $1 \le j \le m+n$ and $l \in \Zp$, let $\F_{(p+q+j)}^{l}$ be the subspace of $\F_{(p+q+j)}$ spanned by all monomials of degree $l$.
Evidently, $\F_{(i)}^{k}$ and $\F_{(p+q+j)}^{l}$ are finite-dimensional $\gld$-modules.
We have the following direct sum decompositions of $\gld$-modules:
$$
\F_{(i)} =\bigoplus_{k \in-\Zp} \F_{(i)}^{k}
\qquad \text{and} \qquad
\F_{(p+q+j)} = \bigoplus_{l \in\Zp} \F_{(p+q+j)}^{l}.
$$
It is well known that $\F_{(i)}^{k}$ and $\F_{(p+q+j)}^{l}$ are irreducible $\gld$-modules for $k \in-\Zp$ and $l \in \Zp$ (see, for instance, \cite[Theorem 3.3]{CLZ} by setting exactly one of $p,q,m,n$ (given there) to 1 and the others to 0).
By \eqref{gld-dec}, $\F$ decomposes into a direct sum of $\gld$-modules
\begin{equation} \label{Fmub}
\F=\bigoplus_{\substack{k_1, \ldots, k_{p+q}\in -\Zp,\\ k_{p+q+1},\ldots,k_{p+q+m+n} \in \Zp}} \F_{(1)}^{k_1}\otimes \cdots \otimes \F_{({p+q+m+n})}^{k_{p+q+m+n}}.
\end{equation}

\begin{prop} \label{sws}
For $\ga_1, \ldots, \ga_d \in \Z$, let $\ga=\sum_{a=1}^{d}  \ga_a \ep_a \in \fh_{d}^*$.
Let $\mu \in \fh_{\pqmn}^*$ be any weight of $V_{\ga_1} \otimes \cdots \otimes V_{\ga_d}$.
Then
$$
(V_{\ga_1} \otimes \cdots \otimes V_{\ga_d})_\mu=\big(\F_{(1)}^{\ov\mu_1} \otimes \cdots \otimes \F_{({p+q+m+n})}^{\ov\mu_{p+q+m+n}} \big)_{\ga},
$$
where
\begin{equation} \label{ovmu}
\ov\mu_i=\begin{cases}
\, \mu(E^i_i)+(-1)^{|i|} d &   \mbox{if} \ \ 1\le i \le p+q ;\\
\, \mu(E^i_i) \ \ & \mbox{if} \ \ p+q+1\le i \le p+q+m+n.
\end{cases}
\end{equation}
\end{prop}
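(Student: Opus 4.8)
The plan is to identify both sides of the asserted identity with the subspace of $\F$ spanned by those monomials that are simultaneously weight vectors of $\gld$-weight $\ga$ and of $\gls$-weight $\mu$. The actions of $\gld$ and $\gls$ on $\F$ commute, and each $e_{aa}$ (see \eqref{phi}) and each $E^i_i$ (see \eqref{ovphi}) acts diagonally in the monomial basis. Hence $\F$ decomposes into joint $(\fh_d,\fh_{\pqmn})$-eigenspaces, the intersection $\F_\ga\cap\F_\mu$ of the weight spaces in \eqref{Fga} and \eqref{Fmu} is well defined, and
$$
(\F_\ga)_\mu=\F_\ga\cap\F_\mu=(\F_\mu)_\ga .
$$

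First I would handle the left-hand side. By \eqref{phi}, $e_{aa}$ acts on $\F$ as the degree operator in the slot-$a$ variables $x^a_i,\eta^a_j,y^a_r,\zeta^a_s$; thus a monomial has $\gld$-weight $\ga=\sum_{a=1}^d\ga_a\ep_a$ exactly when its degree in each slot $a$ equals $\ga_a$. Since $V_{\ga_a}$ is the span of the degree-$\ga_a$ monomials of $\F^{(a)}$, the identification \eqref{gls-dec} shows that $V_{\ga_1}\otimes\cdots\otimes V_{\ga_d}$ is the span of the monomials whose slot-$a$ degree is $\ga_a$ for every $a$, i.e. $V_{\ga_1}\otimes\cdots\otimes V_{\ga_d}=\F_\ga$ (cf. \eqref{Fga} and \eqref{F-decomp}). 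Passing to $\mu$-weight spaces presents the left-hand side as $(\F_\ga)_\mu=\F_\ga\cap\F_\mu$.

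The substance of the argument lies in the right-hand side, where I must check that $\ov\mu_i$ in \eqref{ovmu} is precisely the $\dds$-degree carried by the factor $\F_{(i)}$ for a monomial of $\gls$-weight $\mu$. Writing $N_{y_i}=\sum_a y^a_i\pa_{y^a_i}$ and, analogously, $N_{\zeta_s},N_{x_i},N_{\eta_j}$ for the remaining number operators, the explicit action of $E^i_i$ recorded just before \eqref{Fmu} reads $E^i_i=-N_{y_i}-d$ for $1\le i\le p$, $E^{p+s}_{p+s}=-N_{\zeta_s}+d$ for $1\le s\le q$, $E^{p+q+i}_{p+q+i}=N_{x_i}$ for $1\le i\le m$, and $E^{p+q+m+j}_{p+q+m+j}=N_{\eta_j}$ for $1\le j\le n$. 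In each case a short computation, using $(-1)^{|i|}=1$ in the bosonic ranges and $(-1)^{|i|}=-1$ in the fermionic range $p<i\le p+q$, shows that $\ov\mu_i$ equals the $\dds$-degree in $\F_{(i)}$ (e.g. for $1\le i\le p$ one gets $-N_{y_i}=\mu(E^i_i)+d=\ov\mu_i$, and $-N_{y_i}$ is indeed the degree in $\F_{(i)}=\C[y^1_i,\ldots,y^d_i]$). By \eqref{gld-dec} it follows that $\F_{(1)}^{\ov\mu_1}\otimes\cdots\otimes\F_{(p+q+m+n)}^{\ov\mu_{p+q+m+n}}=\F_\mu$, so passing to $\ga$-weight spaces presents the right-hand side as $(\F_\mu)_\ga=\F_\mu\cap\F_\ga$.

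Comparing the two computations yields the asserted equality, both sides being $\F_\ga\cap\F_\mu$. The only delicate point is the bookkeeping for the right-hand side: the shifts $\pm d$ arise from normal-ordering the generators in \eqref{ovphi} — a bosonic commutator for the $y$-variables and a fermionic anticommutator for the odd $\zeta$-variables — and the sign $(-1)^{|i|}$ in the definition \eqref{ovmu} of $\ov\mu_i$ is exactly what cancels these shifts, so that $\ov\mu_i$ records a genuine monomial degree.
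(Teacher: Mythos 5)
Your proposal is correct and follows essentially the same route as the paper: both sides are identified with $\F_\ga\cap\F_\mu$ by matching $V_{\ga_1}\otimes\cdots\otimes V_{\ga_d}$ with the $\gld$-weight space $\F_\ga$ via \eqref{F-decomp} and \eqref{Fga}, and $\F_{(1)}^{\ov\mu_1}\otimes\cdots\otimes\F_{(p+q+m+n)}^{\ov\mu_{p+q+m+n}}$ with the $\gls$-weight space $\F_\mu$ via \eqref{Fmu} and \eqref{Fmub}. The only difference is that you verify the degree bookkeeping (the $\pm d$ shifts and the sign $(-1)^{|i|}$) explicitly, whereas the paper cites the actions of $E^i_i$ recorded just before \eqref{Fmu}; your computations agree with those.
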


\begin{proof}
By \eqref{F-decomp} and \eqref{Fga}, we have
$$
 \F_{\ga}=V_{\ga_1} \otimes \cdots \otimes V_{\ga_d}.
$$
Also, \eqref{Fmu} and \eqref{Fmub} yield
$$
\F_\mu=\F_{(1)}^{\ov\mu_1} \otimes \cdots \otimes \F_{({p+q+m+n})}^{\ov\mu_{p+q+m+n}}.
$$
Consequently,
$$
(V_{\ga_1} \otimes \cdots \otimes V_{\ga_d})_\mu=\F_\ga \cap\F_\mu = \big(\F_{(1)}^{\ov\mu_1} \otimes \cdots \otimes \F_{\ov\mu_{p+q+m+n}}^{(p+q+m+n)} \big)_{\ga},
$$
as asserted.
\end{proof}

For $\gls$-modules $M_1, \ldots, M_d$ and $\w=(w_1, \ldots, w_d) \in \C^d$, we write
$$
\bn M= M_1 \otimes \cdots \otimes M_d   \qquad \text{and} \qquad \bn M(\w)=M_1(w_1) \otimes \cdots \otimes M_d (w_d).
$$

\begin{thm} \label{Bethe-unitary}
Suppose $p\not=0\not=m$.
For $a=1, \ldots, d$, let $L_{a}=V_{\ga_a}$ for some $\ga_a \in \Z$.
For any weight $\mu \in \fh_{\pqmn}^*$ of $\un L$, $\z \in \Xpqmn$, and $\w \in \Xd$, we have:
\begin{enumerate}[\normalfont(i)]

\item $\un L(\w)_{\mu}$ is a cyclic $\Bsz$-module.

\item $\big(\Bsz \big)_{\un L(\w)_{\mu}}$ is a Frobenius algebra.

\item  $\big(\Bsz \big)_{\un L(\w)_{\mu}}$ is a maximal commutative subalgebra of $\End(\un L(\w)_{\mu})$ of dimension $\dim \! \left(\un L(\w)_{\mu} \right)$.

\item Every eigenspace of $\big(\Bsz \big)_{\un L(\w)_{\mu}}$ is one-dimensional.

\end{enumerate}
Moreover, $\Bsz$ is diagonalizable with a simple spectrum on $\un L(\w)_{\mu}$ for generic $\z$ and generic $\w$.

\end{thm}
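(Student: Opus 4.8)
We prove the theorem by transporting the known statements for the Bethe algebra $\Bdw$ of $\gld$ acting on a tensor product of \emph{finite-dimensional} irreducible $\gld$-modules to $\Bsz$ through the Bethe duality \thmref{Bethe_duality}. The starting point is that $\F$ carries two evaluation-module structures. By \eqref{phi_z}, the operator $e_{ab}[k]$ acts on the factor $\F_{(i)}$ of the decomposition \eqref{gld-dec} through the scalar $z_i^k$; hence, via $\phi_\z$, $\F$ is the tensor product of evaluation $\gld[t]$-modules $\F_{(1)}(z_1)\otimes\cdots\otimes\F_{(p+q+m+n)}(z_{p+q+m+n})$. Dually, by \eqref{ovphi_w} and the decomposition \eqref{gls-dec}, $\F=\F^{(1)}(w_1)\otimes\cdots\otimes\F^{(d)}(w_d)$ as a $\gls[t]$-module via $\varphi_\w$, and $\un L(\w)=V_{\ga_1}(w_1)\otimes\cdots\otimes V_{\ga_d}(w_d)$ is a $\gls[t]$-submodule by \eqref{F-decomp}. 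Since $\un L(\w)$ coincides with $\un L$ as a $\gls$-module, the $\fh_{\pqmn}$-weight space $\un L(\w)_\mu$ equals $\un L_\mu$ as a subspace, and \propref{sws} identifies it with the $\fh_d$-weight space $\bn V(\z)_\ga$, where
$$
\bn V(\z):=\F_{(1)}^{\ov\mu_1}(z_1)\otimes\cdots\otimes\F_{(p+q+m+n)}^{\ov\mu_{p+q+m+n}}(z_{p+q+m+n}),
$$
$\ga=\sum_{a=1}^d\ga_a\ep_a$, and $\ov\mu_i$ is given by \eqref{ovmu}. Restricting the equal subalgebras $\phi_\z(\Bdw)=\varphi_\w(\Bsz)$ of $\End(\F)$ from \thmref{Bethe_duality} to this common invariant subspace yields the crucial identity of image algebras acting on one and the same finite-dimensional space:
$$
\big(\Bsz\big)_{\un L(\w)_\mu}=\big(\Bdw\big)_{\bn V(\z)_\ga}.
$$
Every property claimed for the left-hand side is therefore literally a property of the right-hand side.

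It remains to establish the corresponding statements for $\big(\Bdw\big)_{\bn V(\z)_\ga}$, which lie squarely within \secref{BA}. Each $\F_{(i)}^{\ov\mu_i}$ is a finite-dimensional irreducible $\gld$-module, so $\bn V(\z)$ is a tensor product of evaluation modules at the $p+q+m+n$ distinct points $\z\in\Xpqmn$; moreover $\mu_\w$ corresponds to the diagonal matrix with entries $-w_1,\ldots,-w_d$, whose centralizer in $\gld$ is $\fh_d$ precisely when the $w_a$ are distinct, so $\mu_\w$ is regular for $\w\in\Xd$. Part (i) now follows from \thmref{g-cyclic}, which makes $\bn V(\z)$ a cyclic $\Bdw$-module; as $\Bdw$ commutes with $\fh_d$ and hence preserves the weight grading, writing a cyclic generator as a sum of its weight components shows that the invariant subspace $\bn V(\z)_\ga$ is itself cyclic. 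Parts (ii)--(iv) follow by transporting the weight-space versions of \thmref{Frob} and \thmref{max} (via \remref{max-r}), all valid for every $\w\in\Xd$ and $\z\in\Xpqmn$: the Frobenius property, the maximality together with the dimension equality, and the one-dimensionality of the eigenspaces of $\big(\Bdw\big)_{\bn V(\z)_\ga}$ pass verbatim to $\big(\Bsz\big)_{\un L(\w)_\mu}$.

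The final assertion is again a transfer, this time of \thmref{g-diag}: $\Bdw$ is diagonalizable with a simple spectrum on $\bn V(\z)$ for generic $\mu\in\gld^*$ and generic $\z\in\Xpqmn$, and diagonalizability with a simple spectrum descends from $\bn V(\z)$ to the invariant subspace $\bn V(\z)_\ga$. The main obstacle is that \thmref{g-diag} furnishes a generic locus inside all of $\gld^*$, whereas here $\mu$ is confined to the $d$-dimensional family of diagonal functionals $\{\mu_\w\}$. By parts (ii)--(iv), the image algebra is, for every $(\w,\z)\in\Xd\times\Xpqmn$, a commutative Frobenius algebra with one-dimensional eigenspaces; for such an algebra a simple spectrum is equivalent to semisimplicity (reducedness), which is cut out by the non-vanishing of a discriminant and hence is Zariski-open in $(\w,\z)$. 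Thus the crux is to prove this open set nonempty, i.e., to exhibit a single pair $(\w,\z)\in\Xd\times\Xpqmn$ with a simple spectrum---equivalently, that the diagonal family $\{\mu_\w\}$ is not entirely contained in the locus where $\Bdw$ fails to have a simple spectrum on $\bn V(\z)$. I expect this to be the hardest step, and I would settle it either by invoking the finite-dimensional $\gld$ Bethe-ansatz results of Mukhin--Tarasov--Varchenko, where a diagonal twist $\mu_\w$ with distinct $w_a$ is exactly the regular parameter for which a simple spectrum is known to be generic, or by a degeneration argument driving the evaluation points $z_i$ far apart so that the joint spectrum decouples and becomes manifestly simple. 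Once one such point is produced, Zariski-openness propagates a simple spectrum to generic $(\w,\z)$, and the identity of the first paragraph transports it to $\Bsz$, completing the proof.
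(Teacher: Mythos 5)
Your proposal is correct and follows essentially the same route as the paper: the key identity $\big(\Bsz\big)_{\un L(\w)_{\mu}}=(\Bdw)_{\bn N(\z)_\ga}$ obtained from \thmref{Bethe_duality} and \propref{sws}, followed by \thmref{g-cyclic}, \thmref{Frob}, \thmref{max} (via \remref{max-r}), and \thmref{g-diag}. The genericity worry you flag at the end is legitimate but resolvable exactly as you suspect --- the simple-spectrum locus is Zariski-open and invariant under conjugation of $\mu$, and since every regular semisimple functional on $\gld$ is conjugate to some diagonal $\mu_\w$, a nonempty such locus meets the diagonal family in a dense open subset of $\C^d\times\Xpqmn$; the paper itself simply cites \thmref{g-diag} without elaborating on this point.
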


\begin{proof}
Let $\mu \in \fh_{\pqmn}^*$ be a weight of $\un L$.
For $i \in \I$, let $N_i=\F_{(i)}^{\ov\mu_i}$, where $\ov\mu_i$ is defined as in \eqref{ovmu}.
Write $\z=(z_1, \ldots, z_{p+q+m+n})$.
We set $\bn N= N_1 \otimes \cdots \otimes N_{p+q+m+n}$ and $\bn N(\z)=N_1(z_1)  \otimes \cdots \otimes  N_{p+q+m+n} (z_{p+q+m+n})$.
In view of \thmref{Bethe_duality} and \propref{sws},
\begin{equation} \label{eq-act}
\big(\Bsz \big)_{\un L(\w)_{\mu}}=(\Bdw)_{ \bn N(\z)_\ga}.
\end{equation}
By \thmref{g-cyclic}, $\bn N(\z)_\ga$ is a cyclic $\Bdw$-module as $\Bdw$ preserves weight spaces, and by \thmref{Frob}, $(\Bdw)_{ \bn N(\z)_\ga}$ is a Frobenius algebra.
Hence, (i) and (ii) follow from \eqref{eq-act}.
Also, (iii) and (iv) are immediate consequences of \cite[Lemma 1.3]{Lu} (cf. \remref{max-r}). The last statement follows from \eqref{eq-act} and \thmref{g-diag}.
\end{proof}

\begin{rem}

A similar statement corresponding to $p=0$ or $m=0$ can be formulated. The details will be left to the reader.

\end{rem}

\thmref{Bethe-unitary} suggests that the super analog of \conjref{cyclic-diag-conj} should be true for the general linear Lie superalgebra.

\begin{conj} \label{cyclic-diag-gls}
Suppose $p$, $q$, $m$, and $n$ are not all zero. For $a=1, \ldots, d$, let $L_a$ be a unitarizable highest weight $\gls$-module (with respect to $\sig_p$).
For any weight $\mu \in \fh_{\pqmn}^*$ of $\un L$, $\z \in \Xpqmn$, and $\w \in \Xd$, parts (i)--(iv) of \thmref{Bethe-unitary} hold for the Bethe algebra $(\Bsz)_{\un L(\w)_{\mu}}$.
Moreover, $\Bsz$ is diagonalizable with a simple spectrum on $\un L(\w)_{\mu}$ for generic $\z$ and generic $\w$.

\end{conj}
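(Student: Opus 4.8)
The plan is to imitate the proof of \thmref{Bethe-unitary}, reducing a general unitarizable factor to the depth-$1$ case treated there by enlarging the oscillator Fock space. Suppose $L_a$ corresponds to a generalized partition of depth $k_a$ (so $k_a=1$ for all $a$ recovers \thmref{Bethe-unitary}), and put $D=k_1+\cdots+k_d$. Using the oscillator realization behind the Howe duality of $(\gl_D,\gls)$ (see \cite{CLZ, CCL25-1}), I would realize each $L_a$ as a $\gls$-summand of the Fock space attached to the $k_a$ columns of its block. Grouping the $D$ columns into $d$ blocks of sizes $k_1,\ldots,k_d$ and assigning the common parameter $w_a$ to every column in block $a$ turns $\un L(\w)=L_1(w_1)\otimes\cdots\otimes L_d(w_d)$ into a $\gls[t]$-submodule of the enlarged Fock space $\F_D$, where the action is the one produced by $\varphi_{\wt\w}$ for the vector $\wt\w\in\C^D$ that repeats each $w_a$ exactly $k_a$ times. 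In particular every weight space $\un L(\w)_\mu$ is then $\Bsz$-stable.

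I would then pass to the dual side via the Bethe duality of $(\gl_D,\gls)$, namely \thmref{Bethe_duality} with $d$ replaced by $D$, which identifies $\varphi_{\wt\w}(\Bsz)$ with $\phi_\z(\cB_D^{\wt\w})$ inside $\End(\F_D)$; here $\cB_D^{\wt\w}$ is the Bethe algebra for $\gl_D$ with shift $\mu_{\wt\w}$, and the $\gl_D$-current algebra acts on $\F_D=\bigotimes_i\F_{(i)}$ through the points $\z$ as in \eqref{phi_z}. This makes $(\Bsz)_{\un L(\w)_\mu}$ equal to the image of $\cB_D^{\wt\w}$ on the $\cB_D^{\wt\w}$-stable subspace $\un L(\w)_\mu$ of a tensor product of the finite-dimensional $\gl_D$-modules $\F_{(i)}$. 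Each assertion now turns into the corresponding property of $\cB_D^{\wt\w}$---cyclicity, the Frobenius property and maximality for all distinct $\z,\w$, and diagonalizability with a simple spectrum for generic $\z,\w$---which are exactly the conclusions that \thmref{g-cyclic}, \thmref{Frob}, \thmref{max} and \thmref{g-diag} deliver for a \emph{regular} shift. Simplicity of the spectrum is the decisive quantity: it alone passes to any stable subspace, and a diagonalizable commutative operator algebra with distinct joint eigenvalues is automatically a Frobenius, maximal commutative subalgebra of full dimension acting cyclically with one-dimensional eigenspaces, so on the generic half the whole package would descend to $\un L(\w)_\mu$ by the formalism of \cite[Lemma 1.3]{Lu}.

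The \textbf{main obstacle} is that this realization forces the shift $\mu_{\wt\w}$ to be non-regular the moment some factor has depth $k_a\ge2$: the associated diagonal matrix carries the eigenvalue $-w_a$ with multiplicity $k_a$, so $\gl_D^{\mu_{\wt\w}}\cong\gl_{k_1}\times\cdots\times\gl_{k_d}$ has dimension $\sum_a k_a^2>D$. This is precisely where the inputs of \thmref{Bethe-unitary} break down: \thmref{g-cyclic} needs a regular shift and \thmref{g-diag} a generic---hence regular---one, and making $\w$ generic does not remove the forced repetitions in $\wt\w$. I expect this to be essentially the whole difficulty, and the reason the statement is only conjectural. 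I would attack it by a fusion/specialization argument: deform $\wt\w$ to a nearby regular shift with distinct eigenvalues within each block, apply \thmref{g-cyclic}--\thmref{g-diag}, \thmref{Frob} and \thmref{max} there, and then analyze the collision limit in which the eigenvalues merge within blocks. The delicate point is that cyclicity, the Frobenius property, maximality and simplicity of the spectrum may all degenerate under such a collision, so showing that they persist on the fixed subspace $\un L(\w)_\mu$ amounts to a block-regular (parabolic) refinement of the Feigin--Frenkel--Rybnikov theorems and of the Mukhin--Tarasov--Varchenko/Lu Frobenius and maximality results, which is not presently available. A complementary subtlety, absent in \propref{sws}, is that for $k_a\ge2$ the space $\un L$ is a Howe-duality summand rather than a single $\gl_D$-weight space, so it meets several $\gl_D$-weight spaces at once and the $\cB_D^{\wt\w}$-action must be followed across all of them simultaneously.
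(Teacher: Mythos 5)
The statement you were asked to prove is \conjref{cyclic-diag-gls}, which the paper states as a \emph{conjecture}: no proof is given there, and the only supporting evidence supplied is \thmref{Bethe-unitary}, which handles precisely the depth-$1$ case $L_a=V_{\ga_a}$. So there is no proof in the paper to compare yours against, and your proposal, by your own admission, is not a proof either --- it is a strategy outline together with an analysis of why it does not close. That analysis is accurate. Your plan (realize each $L_a$ inside a block of $k_a$ columns of an enlarged Fock space $\F_D$, assign the repeated evaluation parameter $w_a$ to the whole block, and transport the question through the Bethe duality of $(\gl_D,\gls)$ to the action of $\cB_D^{\wt\w}$ on tensor products of the finite-dimensional $\gl_D$-modules $\F_{(i)}$) is exactly the mechanism of the paper's proof of \thmref{Bethe-unitary}, and the obstruction you isolate is the genuine one: once some $k_a\ge 2$, the shift $\mu_{\wt\w}$ is forced to be non-regular, so \thmref{g-cyclic} and \thmref{g-diag} (hence \thmref{Frob} and \thmref{max}, which rest on them) no longer apply, and genericity of $\w$ cannot restore regularity of $\wt\w$. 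Your further observation that $\un L$ is then a Howe-duality multiplicity space rather than a single $\gl_D$-weight space (so one is really facing a parabolic/block-regular refinement of the Feigin--Frenkel--Rybnikov results) correctly identifies the second structural difficulty.

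To be clear about what remains unproved in your write-up: the reduction itself establishes at most that $(\Bsz)_{\un L(\w)_\mu}$ is a quotient of the image of $\cB_D^{\wt\w}$ on a stable subspace of $\bigotimes_i\F_{(i)}(\z)$, and every substantive conclusion (i)--(iv) plus simple spectrum would still require the missing non-regular inputs; the proposed fusion/collision-limit argument is only sketched and, as you note, none of cyclicity, maximality, the Frobenius property, or simplicity of spectrum is known to survive the degeneration $\wt\w_{\text{reg}}\to\wt\w$ on a fixed subspace. So the proposal should be read as a correct identification of why the statement is conjectural, consistent with the paper's presentation, rather than as a proof.
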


\begin{rem}
An analog of \conjref{cyclic-diag-gls} when the entries of $\z$ are all zero is studied in \cite{CCL25-2}.
\end{rem}

Suppose $p\not=0\not=m$.
For $r \in \Z$, let
$$
W_r=
\begin{cases}
L_{p+m|0} (r \ep_{p+1} - \op)  &\quad  \mbox{if $r \ge 0$,}\\
L_{p+m|0} (r \ep_p- \op) &\quad \mbox{if $r < 0$}.
\end{cases}
$$
The following provides evidence for \conjref{cyclic-diag-conj}.

\begin{cor}  \label{evid}
For $a=1, \ldots, d$, let $L_{a}=W_{\ga_a}$ for some $\ga_a \in \Z$.
For any weight $\mu \in \fh_{p+m}^*$ of $\un L$, $\z \in \Xpm$, and $\w \in \Xd$, parts (i)--(iv) of \thmref{Bethe-unitary} hold for the Bethe algebra $(\Bpm)_{\un L(\w)_{\mu}}$.
Moreover, $\Bpm$ is diagonalizable with a simple spectrum on $\un L(\w)_{\mu}$ for generic $\z$ and generic $\w$.

\end{cor}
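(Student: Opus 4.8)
The plan is to obtain the corollary as the specialization $q=n=0$ of \thmref{Bethe-unitary}. The starting observation is that $\glpm=\gl_{p+m|0}$ is precisely $\gls$ with $q=n=0$, so the sign sequence degenerates to $\s=(0^p,1^0,0^m,1^0)=(0^{p+m})$. Since \thmref{Bethe-unitary} is already stated for all admissible $p,q,m,n$ with $p\ne0\ne m$ (and \eqref{depth1} explicitly covers the case $q=0$), the entire mathematical content is inherited, and what remains is to make two bookkeeping identifications transparent.

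First I would record the identification of the Bethe algebras. Under $\s=(0^{p+m})$ the Berezinian $\Bers$ coincides with the column determinant $\cdet$ by \propref{CFR}; consequently the generators $b_i^{\z}(z)$ of $\Bsz$ reduce to the coefficients of $\cdet(\Lsz)$, so that $\Bsz$ becomes $\Bpm$, and the Bethe duality of \thmref{Bethe_duality} specializes to the duality between $\Bdw$ and $\Bpm$. Second I would record the identification of the modules: when $q=0$ one has $\opq=\op$, and only the cases $r\ge0$ and ($r<0$, $q=0$) of \eqref{depth1} survive, yielding $L_{p+m|0}(r\ep_{p+1}-\op)$ and $L_{p+m|0}(r\ep_p-\op)$ respectively, which is exactly $W_r$. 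Thus $L_a=W_{\ga_a}$ is the $q=n=0$ instance of $L_a=V_{\ga_a}$.

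With these two identifications in place, every hypothesis of \thmref{Bethe-unitary} ($p\ne0\ne m$, $\z\in\Xpm$, $\w\in\Xd$, and $\mu$ a weight of $\un L$) is met, and every ingredient of its proof specializes verbatim: \propref{sws} still identifies $(L_{\ga_1}\otimes\cdots\otimes L_{\ga_d})_\mu$ with a weight space of a tensor product of finite-dimensional $\gld$-modules, and the duality gives the algebra equality $(\Bpm)_{\un L(\w)_\mu}=(\Bdw)_{\bn N(\z)_\ga}$. Statements (i)--(iv) and the generic simple-spectrum claim then follow from \thmref{g-cyclic}, \thmref{Frob}, and \thmref{g-diag} for finite-dimensional $\gld$-modules, exactly as in the proof of \thmref{Bethe-unitary}. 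I do not anticipate a genuine obstacle here: the substantive work lives entirely in \thmref{Bethe-unitary}, and the only points demanding care are the two reductions above—that $\Bsz$ collapses to $\Bpm$ and that $V_r$ collapses to $W_r$—which I would state explicitly so that the specialization is unambiguous.
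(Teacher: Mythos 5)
Your proposal is correct and follows exactly the paper's route: the paper's proof is the one-line observation that the corollary is the $q=n=0$ specialization of \thmref{Bethe-unitary}. The two identifications you spell out (that $\Bsz$ collapses to $\Bpm$ via \propref{CFR}, and that $V_r$ collapses to $W_r$) are implicit in the paper and are correctly handled in your write-up.
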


\begin{proof}
This is an immediate consequence of \thmref{Bethe-unitary} by taking $q=n=0$.
\end{proof}

\vskip 2mm
\noindent{\bf Acknowledgments.}
The first author was partially supported by NSTC grants 113-2115-M-006-010 and 114-2115-M-006-005.
The second author was partially supported by NSTC grant 112-2115-M-006-015-MY2.
The authors are grateful to an anonymous expert for valuable comments on \thmref{Bethe_duality} (see \remref{HM-rem}).

\vskip 7mm

\appendix
\section{The proof of \propref{omega-Ber}} \label{Bers-cdet}

The appendix is devoted to proving \propref{omega-Ber}.
We keep the notations defined in \secref{Duality}.
Let
$$
\mc C=\left[\delta_{a,b}(z-w_a)+\sum_{r=1}^{p+q}\frac{\yar\pybr}{\pz-z_r} -\sum_{i=1}^{m+n}\frac{ \xbi \pxai}{\pz-z_{p+q+i}}\right]_{a,b=1,\ldots,d}.
$$

\begin{lem} \label{omega_det}
$\omega\big( \phi_\z  \cdet (\Ldw) \big)=\cdet (\mc C)$.
\end{lem}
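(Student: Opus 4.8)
The plan is to compute the entries of $\phi_\z(\Ldw)$, check that $\omega$ carries them \emph{entrywise} to the entries of $\mc C$, and then reduce the lemma to the fact that the column determinant of a Manin matrix is unchanged when rows and columns are simultaneously reversed. First I would make the entries explicit. Expanding $e_{ab}(z)=\sum_{k\ge 0}e_{ab}[k]z^{-k-1}$, applying \eqref{phi_z}, and using $\sum_{k\ge 0}z_r^kz^{-k-1}=(z-z_r)^{-1}$, the $(a,b)$-entry of $\phi_\z(\Ldw)$ becomes
\begin{equation*}
\phi_\z\big((\Ldw)_{ab}\big)=\delta_{a,b}(\pz-w_a)+\sum_{r=1}^{p+q}\frac{\ybr\pyar}{z-z_r}-\sum_{i=1}^{m+n}\frac{\xai\pxbi}{z-z_{p+q+i}}.
\end{equation*}
Now $\omega$ fixes scalars, sends $\pz\mapsto z$, and sends $(z-z_r)^{-1}=\sum_{k\ge 0}z_r^kz^{-k-1}$ to $(\pz-z_r)^{-1}$; moreover \eqref{anti-inv} gives $\omega(\ybr\pyar)=\yar\pybr$ and $\omega(\xai\pxbi)=\xbi\pxai$. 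Since $\cD$ commutes with $z$ and $\pz$, applying $\omega$ to the display above yields precisely $\mc C_{ab}$ for every $a,b$; that is, $\mc C$ is the entrywise image of $\phi_\z(\Ldw)$ under $\omega$.

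Next I would record that both matrices are Manin matrices of type $(0^d)$. Indeed $\Ldw=\Psi^\w(\cT_d)$ is Manin, and $\phi_\z$, being an algebra homomorphism, preserves commutators and hence the Manin relations, so $\phi_\z(\Ldw)$ is Manin over $\cD\zpz$. Applying the anti-involution $\omega$ to the relation $[a_{i,j},a_{k,l}]=[a_{k,j},a_{i,l}]$ produces $[\omega(a_{i,j}),\omega(a_{k,l})]=[\omega(a_{k,j}),\omega(a_{i,l})]$ (the two sign changes cancel), so $\mc C$ is Manin as well. Because $\omega$ is an anti-homomorphism,
\begin{equation*}
\omega\big(\phi_\z\cdet(\Ldw)\big)=\sum_{\si\in\fS_d}(-1)^{l(\si)}\,\mc C_{\si(d),d}\,\mc C_{\si(d-1),d-1}\cdots\mc C_{\si(1),1}=:R,
\end{equation*}
i.e.\ the signed sum defining $\cdet(\mc C)$ but with the factors taken in the reverse column order $d,d-1,\ldots,1$.

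It remains to prove $R=\cdet(\mc C)$, and this is the only genuinely nontrivial point: for non-commuting entries the order of the factors matters, and the equality holds precisely because $\mc C$ is Manin. Let $w_0\in\fS_d$ be the order-reversing permutation $w_0(j)=d+1-j$, and let $\mc C^{w_0}=\big[\mc C_{w_0(i),w_0(j)}\big]$. By \propref{basics}(ii), $\mc C^{w_0}$ is again Manin, and by \propref{CFR} together with \propref{HM} (with $\s=(0^d)$, so $\s^{w_0}=\s$) we have $\cdet(\mc C^{w_0})=\cdet(\mc C)$. On the other hand, substituting $k=w_0(j)$ and $\tau=w_0\si w_0$ in the expansion of $\cdet(\mc C^{w_0})$ turns the factor at position $j$ into $\mc C_{\tau(k),k}$ with $k$ running through $d,d-1,\ldots,1$, while $(-1)^{l(\si)}=(-1)^{l(\tau)}$ because the sign is conjugation-invariant; since $\si\mapsto\tau$ is a bijection of $\fS_d$, this expansion is exactly $R$. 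Hence $R=\cdet(\mc C^{w_0})=\cdet(\mc C)$, which combined with the previous display proves the lemma.

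The main obstacle is thus the product reversal forced by the anti-involution $\omega$; it is overcome by the Manin structure, the one technical caveat being that \propref{HM} requires ample invertibility of $\mc C$. This follows since $\phi_\z(\Ldw)$ is amply invertible (its entries have the form $\pz\cdot\mathrm{Id}$ plus lower-order terms, and this is preserved by the algebra homomorphism $\phi_\z$) and $\omega$, as an anti-automorphism of $\cD\zpz$, preserves this property.
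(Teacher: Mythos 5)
Your proof is correct, and it rests on the same two pillars as the paper's: the entrywise identity $\omega\big(\phi_\z((\Ldw)_{ab})\big)=\mc C_{ab}$ and the invariance of $\cdet$ of an amply invertible Manin matrix of type $(0^d)$ under simultaneous reversal of rows and columns (\propref{CFR} combined with \propref{HM}). The difference is purely one of ordering, and the two arguments are mirror images. The paper first replaces $\cdet(\Ldw)$ by the expansion of $\cdet\big((\Ldw)^{\pi}\big)$, whose factors occur in the reverse column order $d,\dots,1$, so that applying the anti-involution $\omega$ restores the standard order and lands directly on $\cdet(\mc C)$; you instead apply $\omega$ to the standard expansion, obtain the reversed-order sum $R$, and then use the permutation invariance on the $\mc C$ side to identify $R$ with $\cdet(\mc C)$. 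The price of your ordering is that the Manin and ample-invertibility hypotheses must be verified for $\mc C$ rather than for $\Ldw$. You handle the Manin relations correctly (the sign bookkeeping under the anti-involution works out after relabelling indices), and the substitution $\tau=w_0\si w_0$ together with the conjugation-invariance of the sign is fine. The only glossed point is the assertion that $\omega$ preserves ample invertibility: pushing invertibility of submatrices through an anti-automorphism naturally produces inverses of \emph{transposed} submatrices, so a line of justification is needed there; it is true but not quite immediate, and it is exactly the verification that the paper's ordering avoids.
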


\begin{proof}
For $1 \le a, b \le d$,
\begin{equation}\label{phi_z_w}
\phi_\z \big(e_{ab}(z) \big)=-\sum_{r=1}^{p+q}\frac{\ybr \pyar}{z-z_r}+\sum_{i=1}^{m+n}\frac{\xai \pxbi}{z-z_{p+q+i}}.
\end{equation}
For $i \in \I \cup \{0\}$ and $1 \le a, b \le d$, we define
$$
F^{i}_{a,b}=\begin{cases}
\, \, \delta_{a, b}(\pz-w_a)  &     \mbox{if } \, i=0,\\
\disp{\, \, \frac{\ybi \pyai}{z-z_i}}  &   \mbox{if } \,  1 \le i \le p+q,\\
\disp{ \,\frac{-x^a_{i-p-q}\pa_{x^b_{i-p-q}}}{z-z_i}}  &    \mbox{otherwise},
\end{cases}
$$
and
$$
G^{i}_{a,b}=\begin{cases}
\, \, \delta_{a, b}(z-w_a) &  \mbox{if } \, i=0,\\
\disp{\, \, \frac{\yai \pybi}{\pz-z_i}} & \mbox{if } \,  1 \le i \le p+q,\\
\disp{ \,\frac{-x^b_{i-p-q}\pa_{x^a_{i-p-q}}}{\pz-z_i}} & \mbox{otherwise}.
\end{cases}
$$
We see that
\begin{equation} \label{FG}
\omega (F^i_{a, b})=G^i_{a, b}.
\end{equation}

Note that $\Ldw$ is an amply invertible Manin matrix of type $(0^d)$. Let $\pi \in \fS_d$ be the permutation given by $\pi(i)=d+1-i$ for $i=1, \ldots, d$. Clearly, $\pi^2=1$.
By \eqref{phi_z_w}, \propref{CFR} and \propref{Ber-sig-L}, we obtain
\begin{eqnarray*}
\phi_\z  \cdet (\Ldw) &=&  \phi_\z  \cdet \big((\Ldw)^{\pi} \big) \\
&=&\sum_{\si\in\fS_d}\sum_{i_1,\ldots,i_d=0}^{p+q+m+n} (-1)^{l(\si)} F^{i_d}_{\pi(\si(1)), \pi(1)} \ldots F^{i_1}_{\pi(\si(d)), \pi(d)}\\
&=&\sum_{\si\in\fS_d}\sum_{i_1,\ldots,i_d=0}^{p+q+m+n} (-1)^{l(\si)} F^{i_d}_{\pi \si(1), d} \ldots F^{i_1}_{\pi \si(d), 1}\\
&=&\sum_{\si\in\fS_d}\sum_{i_1,\ldots,i_d=0}^{p+q+m+n} (-1)^{l(\si)} F^{i_d}_{\si(d),d} \ldots F^{i_1}_{\si(1), 1}.
\end{eqnarray*}
Here the last equality holds since $\pi^{-1} \fS_d \pi =\fS_d$ and $l(\pi^{-1} \si \pi)=l(\si)$ for all $\si \in \fS_d$.
Thus,
$$
\omega\big(\phi_\z  \cdet (\Ldw) \big)=\sum_{\si\in\fS_d}\sum_{i_1,\ldots,i_d=0}^{p+q+m+n} (-1)^{l(\si)} \omega \! \left(F^{i_1}_{\si(1),1} \right) \ldots  \omega \! \left(F^{i_d}_{\si(d),d}  \right).
$$
In view of \eqref{FG}, we deduce that $\omega\big(\phi_\z  \cdet (\Ldw) \big)=\cdet (\mc C)$.
\end{proof}

Now we are ready to prove \propref{omega-Ber}.
For $1 \le r, s \le p+q$ and $1 \le i, j \le m+n$, we have
\begin{equation}\label{ovphi_w_z}
{\everymath={\disp}
\begin{array}{cll}
\varphi_\w \big(E^r_s(z) \big) \hspace{-1cm} &=\sum_{a=1}^{d}  \frac{(-1)^{|s|+1} \pyar \yas}{z-w_a}, \quad
\varphi_\w \big(E^r_{p+q+j}(z) \big) \hspace{-0.3cm} &=\sum_{a=1}^{d}  \frac{\pyar \pxaj}{z-w_a}, \\
\varphi_\w \big(E^{p+q+i}_s(z) \big) \hspace{-0.3cm} &=\sum_{a=1}^{d}  \frac{(-1)^{|s|+1} \xai \yas}{z-w_a}, \quad
\varphi_\w \big(E^{p+q+i}_{p+q+j}(z) \big) \hspace{-0.3cm}  &=\sum_{a=1}^{d}  \frac{\xai \pxaj}{z-w_a}.
\end{array}}
\end{equation}
Let
$$
D=\Big[\delta_{a,b}(z-w_a) \Big]_{a,b=1,\ldots,d} \qquad \text{and} \qquad  D^\prime=\Big[\de_{i,j}(\pz-z_i) \Big]_{i,j\in \I}.
$$
Consider the $(m+n) \times d$ matrices
$$
X:=\big[ (-1)^{|p+q+i|} \xai \big]^{a=1, \ldots, d}_{i=1, \ldots, m+n} \qquad \text{and} \qquad P_X:=\big[ \pxai \big]^{a=1, \ldots, d}_{i=1, \ldots, m+n}
$$
and the $(p+q) \times d$ matrices
$$
Y:=\big[ (-1)^{|r|+1} \yar \big]^{a=1, \ldots, d}_{r=1, \ldots, p+q} \qquad \text{and} \qquad  P_Y:=\big[  (-1)^{|r|} \pyar \big]^{a=1, \ldots, d}_{r=1, \ldots, p+q}.
$$
Write
$$
T=
\begin{bmatrix}
Y^t \hspace{2mm} P_X^t
\end{bmatrix}
\qquad \text{and}  \qquad
T^\prime=
\begin{bmatrix} P_Y  \vspace{3mm}  \\ X
\end{bmatrix}.
$$
By \eqref{ovphi_w_z}, we get
$$
\varphi_\w  \Bers \big( \Lsz \big)=\Bers (D^\prime- T^\prime D^{-1} T).
$$

Let
$$
\fL=
\begin{bmatrix}
D & T \vspace{2mm}
 \\ T^\prime & D^\prime
\end{bmatrix}.
$$
It is straightforward to check that $\fL$ is an amply invertible Manin matrix of type $\hat{\s}:=(0^d, \s) \in \cS_{d+p+m|q+n}$.
By \propref{block}, we have
$$
\Ber^{\hat{\s}} (\fL)=(z-w_1)\ldots(z-w_d)\,  \varphi_\w \Bers \big(\Lsz \big).
$$
As $D- T (D^\prime)^{-1} T^\prime$ is a sufficiently invertible Manin matrix of type $(0^d)$, \propref{block2} and \propref{CFR} imply that
\begin{eqnarray*}
\Ber^{\hat{\s}} (\fL) &=& \Bers (D^\prime) \cdot \cdet (D- T (D^\prime)^{-1} T^\prime)\\
&=& R_{p|q}(\pz) \cdot R^\prime_{m|n}(\pz) \cdot \cdet (\mc C'),
\end{eqnarray*}
where
$$
\mc C':= \left[\delta_{a,b}(z-w_a)+\sum_{r=1}^{p+q}\frac{\yar \pybr}{\pz-z_r}-\sum_{i=1}^{m+n}\frac{(-1)^{|p+q+i|}\pxai \xbi}{\pz-z_{p+q+i}}  \right]_{a,b=1,\ldots,d}.
$$

We claim that
$$
R^\prime_{m|n}(\pz) \cdot \cdet (\mc C')= \cdet (\mc C) \cdot R^\prime_{m|n}(\pz).
$$
This yields
$$
\Ber^{\hat{\s}} (\fL) = R_{p|q}(\pz) \cdot \cdet (\mc C) \cdot R^\prime_{m|n}(\pz),
$$
which proves \propref{omega-Ber} by \lemref{omega_det}.

It remains to prove the claim. We only need to show that
\begin{equation} \label{switch}
R^\prime_{m|n}(\pz) \cdot \mc C'_{a,b} =\mc C_{a,b} \cdot R^\prime_{m|n}(\pz), \qquad \mbox{for $1 \le a, b \le d$.}
\end{equation}
Here, for instance, $\mc C_{a,b}$ denotes the $(a,b)$ entry of $\mc C$.
Since
\begin{equation} \label{px-x}
\pxai \xbi =(-1)^{|p+q+i|} \xbi \pxai +\de_{a,b}, \qquad \mbox{for $1 \le a, b \le d$},
\end{equation}
the off-diagonal entries of $\mc C'$ coincide with those of $\mc C$.
For $1 \le i \le m+n$, the term $\pz-z_{p+q+i}$ commutes with $\pxaj \xbj$ and $\yar \pybr$ for all $1 \le j \le m+n$, $1 \le r \le p+q$, and $1 \le a, b \le d$.
In particular, $R^\prime_{m|n}(\pz)$ commutes with any off-diagonal entry of $\mc C'$. This proves \eqref{switch} for $a \not=b$.
Moreover, for $1 \le i \le m+n$ and $1 \le a \le d$,
\begin{equation} \label{pz-z}
(\pz-z_{p+q+i}) (z-w_a)= (z-w_a)(\pz-z_{p+q+i})+1.
\end{equation}
If $1 \le i \le m$, we see from \eqref{px-x} and \eqref{pz-z} that
$$
{(\pz-z_{p+q+i})} \left(z-w_a-\frac{\pxai \xai}{\pz-z_{p+q+i}}\right)=\left(z-w_a-\frac{\xai\pxai}{\pz-z_{p+q+i}}\right)(\pz-z_{p+q+i}).
$$
Similarly, if $m+1 \le i \le m+n$, we have
$$
{(\pz-z_{p+q+i})^{-1}} \left(z-w_a+\frac{\pxai \xai}{\pz-z_{p+q+i}}\right)=\left(z-w_a-\frac{\xai\pxai}{\pz-z_{p+q+i}}\right)(\pz-z_{p+q+i})^{-1}.
$$
The above discussion proves \eqref{switch} for $a=b$. Thus, the proof of \eqref{switch} is now complete.

\bigskip
\frenchspacing

\end{document}